\documentclass[12pt]{article}

\usepackage[leqno]{amsmath}
\usepackage{amssymb,amsfonts,xfrac,MnSymbol}
\usepackage{amsthm}
\usepackage{cite}
\usepackage{hyperref}
\usepackage{todonotes}
\usepackage{enumitem}
\usepackage{graphicx}
\usepackage{tikz-cd, tikz}
\usepackage{color}
\usepackage{import}
\usepackage{cleveref}
\usepackage{soul}
\usepackage{setspace}
\usepackage[margin=1.2in]{geometry}
\usepackage{comment}

\numberwithin{equation}{section}

\usepackage{thm-restate}

\newtheorem{theorem}{Theorem}[section]
\newtheorem{claim}[theorem]{Claim}
\newtheorem{proposition}[theorem]{Proposition}
\newtheorem{lemma}[theorem]{Lemma}
\newtheorem{corollary}[theorem]{Corollary}

\newtheorem*{theorem*}{Theorem}
\newtheorem*{claim*}{Claim}
\newtheorem*{proposition*}{Proposition}
\newtheorem*{lemma*}{Lemma}
\newtheorem*{corollary*}{Corollary}

\theoremstyle{definition}
\newtheorem{definition}[theorem]{Definition}
\newtheorem{observation}[theorem]{Observation}

\newtheorem{remark}[theorem]{Remark}

\newtheorem{question}[theorem]{Question}

\newtheorem{fact}[theorem]{Fact}
\newtheorem{notation}[theorem]{Notation}

\newtheorem*{definition*}{Definition}
\newtheorem*{observation*}{Observation}
\newtheorem*{remark*}{Remark}
\newtheorem*{example*}{Example}
\newtheorem*{question*}{Question}
\newtheorem*{exercise*}{Exercise}
\newtheorem*{fact*}{Fact}
\newtheorem*{notation*}{Notation}

\newcommand{\bbG}{\mathbb{G}}

\newcommand{\bbN}{\mathbb{N}}

\newcommand{\bbU}{\mathbb{U}}

\newcommand{\bfD}{\mathbf{D}}

\newcommand{\bfd}{\mathbf{d}}

\newcommand{\calA}{\mathcal{A}}
\newcommand{\calB}{\mathcal{B}}
\newcommand{\calC}{\mathcal{C}}
\newcommand{\calD}{\mathcal{D}}
\newcommand{\calE}{\mathcal{E}}

\newcommand{\calH}{\mathcal{H}}

\newcommand{\calN}{\mathcal{N}}

\newcommand{\calQ}{\mathcal{Q}}
\newcommand{\calR}{\mathcal{R}}
\newcommand{\calS}{\mathcal{S}}

\newcommand{\calX}{\mathcal{X}}
\newcommand{\calY}{\mathcal{Y}}

\newcommand{\cl}[1]{\overline{#1}}
\newcommand{\actson}{\curvearrowright}

\newcommand{\inj}{\hookrightarrow}

\newcommand{\normalin}{\lhd}

\newcommand{\ii}{^{-1}}

\newcommand{\gen}[1]{\left< #1 \right>}

\newcommand{\tild}[1]{\widetilde{#1}}

\newcommand{\nea}{\nearrow}
\newcommand{\sea}{\searrow}
\newcommand{\nwa}{\nwarrow}

\DeclareMathOperator{\Stab}{Stab}

\DeclareMathOperator{\id}{id}
\DeclareMathOperator{\Aut}{Aut}
\DeclareMathOperator{\Sym}{Sym}
\DeclareMathOperator{\Alt}{Alt}
\DeclareMathOperator{\Fix}{Fix}

\DeclareMathOperator{\Comm}{Comm}

\DeclareMathOperator{\Lk}{Lk}
\DeclareMathOperator{\supp}{Supp}

\DeclareMathOperator{\pr}{pr}
\DeclareMathOperator{\str}{str}
\DeclareMathOperator{\im}{Im}

\title{Simple Lattices in Products of Davis Complexes}
\author{Michal Amir and Nir Lazarovich}
\date{}

\begin{document}
\maketitle

\begin{abstract}
Burger and Mozes \cite{burger1997finitely} constructed the first examples of simple uniform lattices in products of trees. 
In this paper, we construct simple uniform lattices in products of certain Davis complexes. 
More precisely, we consider lattices in products of trees and two-dimensional Davis complexes of the right-angled Coxeter group whose defining graph is an odd graph.
As part of the proof, we define an analogue of the Burger-Mozes universal groups in this setting, and provide a local criterion for a vertex transitive group to be dense in the universal group.





\end{abstract}

\section{Introduction}

\subsection{Background}
Simple groups are often seen as the building blocks of group theory. While finite simple groups were completely classified, finding infinite simple groups is still an active field of research. 
Among them, the search for finitely presented infinite simple groups has been of particular interest. There are only a few known families of such groups.  
The first example of a finitely presented simple group 
was given by Thompson 
in 1965. 
Thompson's group $T$ is a group of homeomorphisms of the circle.
Generalizations of this example, and of the simplicity of certain groups of homeomorphisms, were 
later developed by Higman \cite{higman1974finitely}, 
Scott \cite{scott1984construction},
Brown \cite{brown1987finiteness},
Stein \cite{stein1992groups}, R{\"o}ver \cite{rover1999constructing},  
and more recently, Lodha \cite{lodha2019finitely}. 
We refer the reader to the references, and references within for more information. 

A different family of examples arises from  lattices in products. Recall that a subgroup $\Gamma$ of a locally compact topological group $G$ is a \emph{lattice} if it is discrete and $G/\Gamma$ supports a finite volume $G$ invariant measure. Such a lattice is \emph{uniform} if moreover $G/\Gamma$ is compact. We will say that a group is a lattice in a space $X$ if it is a lattice in the group of isometries of $X$. Since the spaces we will consider are complexes, we will denote this group by $\Aut(X)$.
Burger and Mozes \cite{
burger2000groups}  first constructed  uniform lattices acting on two regular trees. Independently, Wise \cite{wise2007complete}  
found an example of non-residually-finite uniform lattice in products of trees. 
These groups have finite index in groups that act simply transitively on the vertex set of a product of two regular trees. Such groups are known today as \emph{BMW} groups (after Burger, Mozes and Wise). More examples of virtually simple BMW groups were found by Rattaggi  \cite{rattaggi2004computations} and 
Radu \cite{radu2020new}. Moreover, Lazarovich-Levcovitz-Margolis \cite{lazarovich2022counting} 
showed that simplicity is ubiquitous among BMW groups. 
Titz-Mite and Witzel \cite{mite2023c2,mite2025non} produced non-residually-finite uniform lattices in $\tild C_2$-buildings, these are expected to be virtually simple. 

In higher dimensions, Caprace and R\'emy \cite{caprace2009simplicity} proved that certain Kac-Moody groups are simple. These groups are  non-uniform lattices in products of two buildings. These buildings are not necessarily trees, and so their examples include spaces of dimension higher than two. 
Recently, Hughes \cite{hughes2022lattices} constructed non-residually-finite uniform lattices in higher dimensional CAT(0) cube complexes.

\subsection{Main Results}
The goal of this paper is to construct the first examples  of uniform simple lattices acting on  three-dimensional spaces.
These spaces will be products of two Davis complexes -- a regular tree and the right-angled Davis complex of an Odd graph. 

Let $d\in \bbN$, the Odd graph $O_d$ is the following $d$-regular graph: The vertex set of $O_d$ is the set $\binom{[2d-1]}{d-1}$ of subsets of $\{1,\dots,2d-1\}$ of size $d-1$. Two sets are connected by an edge if they are disjoint.
Let $X=X_{O_d}$ be the right-angled Davis complex associated to the graph $O_d$.
That is, $X$ is obtained from the Cayley graph of the right angled Coxeter group $W_L$ by collapsing bigons to edges, and filling in 
cubes whenever possible (cf. \S\ref{the davis complex}). 
Denote by $T_c$ the $c$-regular tree.

The main result of this paper is the following: 

\begin{restatable}{theoremA}{simpleLatticesInProducts}
\label{simple Lattices In Products}
    For infinitely many $c,d$ 
    there exists a finitely-presented, simple, uniform lattice in $\Aut( T_c )\times \Aut(X_{O_d})$. 
\end{restatable}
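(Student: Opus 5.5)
The plan follows the Burger--Mozes strategy, adapted to the Davis complex $X=X_{O_d}$. Throughout, $\Gamma$ will be a group acting simply transitively on the vertex set of $T_c\times X$. It will be built as the fundamental group of a finite square-like complex: a "VH"-style complex whose vertical link is a bouquet/rose structure giving $T_c$, and whose horizontal part is a finite quotient of $X$ (or, more precisely, a complex of groups). Such a $\Gamma$ is automatically a finitely presented uniform lattice in $\Aut(T_c)\times\Aut(X_{O_d})$, since the action is cocompact and the vertex stabilizers are trivial. The remaining work is to arrange simplicity. For this I will invoke the Burger--Mozes normal subgroup theorem together with their "finite index of $\Gamma^{(\infty)}$ / non-residual finiteness" argument. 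Concretely, the proof has two parts: (i) every nontrivial normal subgroup of $\Gamma$ has finite index, and (ii) $\Gamma$ has no proper finite-index subgroups, after passing to a suitable finite-index subgroup or choosing $\Gamma$ carefully.

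For (i) I use the normal subgroup theorem of Bader--Shalom for irreducible lattices in products of locally compact groups. By that theorem it suffices to show two things. First, $\Gamma$ is irreducible, meaning its projections are non-discrete. Second, the closures $H_1=\cl{\pr_1\Gamma}\le\Aut(T_c)$ and $H_2=\cl{\pr_2\Gamma}\le\Aut(X)$ are topologically just-noncompact, or at least have the property that every nontrivial closed normal subgroup is cocompact. For the tree factor, I arrange the local action at vertices to be $2$-transitive, say $\Alt(c)$ or $\Sym(c)$. Then $H_1$ is dense in a Burger--Mozes universal group $U(F)$, whose structure theory is known. For the Davis factor, I use the analogue of the universal group announced in the abstract. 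This is $U_X(F)$, where the local action is prescribed on the links, which are copies of $O_d$ with the natural $\Sym(2d-1)$-action. The abstract's density criterion then yields that $H_2$ equals $U_X(F)$, as long as the local action of $\Gamma$ at a vertex of $X$ is a sufficiently transitive group such as $\Sym(2d-1)$ or $\Alt(2d-1)$, acting on $\binom{[2d-1]}{d-1}$. Simplicity of $U_X(F)^+$, in the Tits-type sense, follows by analogy with Tits' property P for trees, using the wall/half-space structure of the right-angled complex. Combined with Bader--Shalom, this gives that $\Gamma$ is just-infinite.

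For (ii) I follow Burger--Mozes. If $\Gamma$ had a proper finite-index normal subgroup, the profinite closure would contain nontrivial finite quotients. I aim to show that $\Gamma^{(\infty)}$, the intersection of all finite-index subgroups, is nontrivial. This follows once $\Gamma$ is non-residually finite, which I get from their criterion: the local action groups are large enough, for instance containing the alternating groups with trivial stabilizer quotients, that the projections are not locally quasi-primitive-discrete, so $\Gamma$ is not residually finite. By (i), $\Gamma^{(\infty)}$ then has finite index. It is itself a lattice with the same properties, and it has no finite quotients, hence it is simple. Finite presentation passes to finite-index subgroups, which completes the argument.

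The main obstacle is the explicit construction. We need finite complexes with prescribed local actions in both factors simultaneously, with both local groups $2$-transitive and "large" (so $\Alt$-type), for infinitely many $(c,d)$. I will first try to mimic the Burger--Mozes and Rattaggi constructions, taking the $X$-side generators to be Coxeter reflections permuted by $\Sym(2d-1)$ and the $T_c$-side generators to be permutations. I will then verify the local conditions via a counting/parity argument on $\binom{2d-1}{d-1}$, choosing the parameters $c,d$ so that the required commutation squares exist.
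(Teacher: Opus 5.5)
Your skeleton (Bader--Shalom, density of the projections in universal groups whose $U^+$ is simple of index $2$, then passing to the finite residual) is the one the paper uses. However, step (ii) has a genuine gap.

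Non-residual finiteness does not follow from the local actions being large and the projections being non-discrete, because it is not a local property. For example, irreducible arithmetic lattices in $\mathrm{PGL}_2(\mathbb{Q}_p)\times\mathrm{PGL}_2(\mathbb{Q}_q)$ act on $T_{p+1}\times T_{q+1}$ with non-discrete projections and $2$-transitive local actions. Yet they are finitely generated linear groups, hence residually finite. No criterion of the form ``alternating local action plus non-discreteness implies non-residual finiteness'' is available to replace your appeal to Burger--Mozes. Without one, $\Gamma^{(\infty)}$ could be trivial and the argument gives nothing. Non-residual finiteness has to be engineered into the lattice. Your construction paragraph (``try to mimic'', ``counting/parity argument'') supplies neither that nor the lattice itself.

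The missing idea is an embedding. The paper starts from a non-residually finite involutive BMW group $\Gamma\le\Aut(T_m)\times\Aut(T_n)$ with alternating local actions. These exist for infinitely many $(m,n)$ by Radu and by Lazarovich--Levcovitz--Margolis, which is where ``infinitely many $c,d$'' comes from. The paper combines the defining data of $\Gamma$ with an $n$-scaffolding to produce an interlacing pair $(\{\zeta_z\}_{z\in Z},\{\delta_D\}_{D\in V(O_d)})$. The group generated by $Z\cup V(O_d)$ with relations $z^2$, $D^2$, $[D,D']$ for edges, and $z\,D\,\delta_D(z)\,\zeta_z(D)$ then has three properties:
\begin{itemize}
\item it acts simply transitively on $T_c\times X_{O_d}$;
\item its local actions are $\Alt(Z)$ and $\Alt_{2d-1}$;
\item it contains $\Gamma$ in a hyperplane stabilizer.
\end{itemize}

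Even identifying the universal cover requires work you do not mention. The interlacing conditions are what make every $3$-cycle $\{z,D,D'\}$ in a vertex link bound a $3$-cube, so the links are flag and equal to $Z*O_d$. One then needs superstar-transitivity of $Z*O_d$ together with Lazarovich's uniqueness theorem to conclude the cover is $T_c\times X_{O_d}$.

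Non-residual finiteness of $\Lambda'$ is then inherited from $\Gamma$. The embedded $\Gamma$ also forces irreducibility, i.e.\ the non-discreteness of the projections that the density theorem needs. Finally, that theorem is stated for subgroups of $U(\Alt_{2d-1})$, so the local action on $X_{O_d}$ must be exactly $\Alt_{2d-1}$, not $\Sym_{2d-1}$.
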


The proof of this theorem follows a similar strategy to that of Burger-Mozes \cite{burger2000groups,burger2000lattices}.
The general strategy for proving such a theorem is finding a non-residually-finite uniform lattice $\Lambda' \le \Aut( T_c )\times \Aut(X_{O_d})$ whose projections 
on each factor are dense in some simple topological groups. 
In our case, these simple topological groups are an adaptation of the universal groups of Burger-Mozes (cf. \Cref{def: universal group}).
Besides being simple groups, the remarkable property of the universal groups is that in order to check that a group (in our case, the projection of $\Lambda'$) is dense in them it suffices to check it locally.
Namely, to check that the restriction of a vertex stabilizer to the ball of radius 2 around this vertex matches that of the universal group.
Burger and Mozes proved this \emph{local-to-global} property for the universal group of a regular tree, the following is the analogous theorem for the Davis complex $X_{O_d}$ (cf. \Cref{def: universal group} for definitions):


\begin{restatable}{theoremA}{localToGlobal}
\label{local to global}\label{characterization of dense subgroups}
Let $6\le d\in \bbN$. Let $G$ be a vertex transitive subgroup of the universal group $U=U(\Alt_{2d-1})\le \Aut(X_{O_d})$. Then, the following are equivalent:
\begin{enumerate}
    \item $G$ is dense in $U$.
    \item $G$ is non-discrete and $G_v|_{\calB_1(v)} = U_v|_{\calB_1(v)}$ for some vertex $v$ of $X_{O_d}$.
    \item $G_v|_{\calB_2(v)} = U_v|_{\calB_2(v)}$ for some vertex $v$ of $X_{O_d}$.
\end{enumerate}
\end{restatable}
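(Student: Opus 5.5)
We prove the cycle of implications (1)$\Rightarrow$(3)$\Rightarrow$(2)$\Rightarrow$(1), in the spirit of Burger--Mozes, using the structure of $U=U(\Alt_{2d-1})$ as the group of automorphisms whose local actions at every vertex lie in $\Alt_{2d-1}$, where $\Alt_{2d-1}$ acts on the link of a vertex, a copy of $O_d$ whose vertices are the $(d-1)$-subsets of $[2d-1]$.

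\textbf{(1)$\Rightarrow$(3).} This is formal. $U_v$ is open and $U_v|_{\calB_2(v)}$ is a finite group. If $G$ is dense, every $u\in U_v$ can be approximated by elements of $G$ agreeing with $u$ on $\calB_2(v)$. Such an element fixes $v$, so it lies in $G_v$, and hence $G_v|_{\calB_2(v)}=U_v|_{\calB_2(v)}$.

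\textbf{(3)$\Rightarrow$(2).} First note that $G_v|_{\calB_1(v)}=U_v|_{\calB_1(v)}$ follows by restricting. For non-discreteness it suffices to show that the pointwise stabilizer $G_{\calB_1(v)}$ acts nontrivially on $\calB_2(v)$. Then, using vertex transitivity to propagate, stabilizers of balls $\calB_n(v)$ are nontrivial for all $n$, so $G_v$ is infinite and hence not discrete.

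For the first step, we exhibit an element of $U_v$ trivial on $\calB_1(v)$ but nontrivial on $\calB_2(v)$. Take a neighbor $w$ of $v$ corresponding to a generator $s$. The link of $w$ splits into the part near $v$ (the star of $s$ in $O_d$, i.e.\ $s$ and its $d$ neighbours) and the rest. An element of $U$ fixing $\calB_1(v)$ fixes the star of $s$ in $\Lk(w)$. Its local action at $w$ may therefore be any element of $\Alt_{2d-1}$ fixing a vertex of $O_d$ and all its neighbours. The pointwise stabilizer of $s$ and its neighbours is isomorphic to $\Alt(d-1)$ acting on the complement of $s$'s complement. This group is nontrivial and acts nontrivially on the rest of the link once $d\ge 4$. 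The extension property of $U$, that local actions can be prescribed independently on the branches, realizes this element. By (3), $G_v$ surjects onto such an element too.

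\textbf{(2)$\Rightarrow$(1).} This is the main step. We show that $\overline{G}_v\supseteq U_v$; with vertex transitivity this gives $\overline{G}=U$. Following Burger--Mozes, we prove by induction on $n$ that $G_v|_{\calB_n(v)}=U_v|_{\calB_n(v)}$. The base cases are $n=1$ (the hypothesis) and $n=2$. For the latter, non-discreteness gives some $g\in G_v$ acting trivially on $\calB_1(v)$ but nontrivially further out; this is the analogue of the Burger--Mozes lemma that a non-discrete group with 2-transitive local action has nontrivial $\calB_1$-fixators acting nontrivially on spheres. The normal-subgroup structure of the local group then upgrades this single element to the full kernel. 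The key algebraic input, which is where $d\ge 6$ should enter, is that the point stabilizer in $\Alt_{2d-1}$ of a vertex of $O_d$ together with its neighbours is isomorphic to $\Alt_{d-1}$ (or a close variant), hence simple and non-abelian. The kernel of the restriction $U_v|_{\calB_{n+1}}\to U_v|_{\calB_n}$ is a product over boundary vertices $w$ of these groups. Since $G_v$ acts transitively on the relevant boundary vertices, a Goursat/Hall-type argument shows that a normal subgroup projecting nontrivially onto each simple factor contains the whole product, and conjugating by $G_v$ spreads one nontrivial element to every factor.

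The main obstacle is controlling the geometry of $X_{O_d}$ in this induction. Unlike in a tree, a vertex at distance $n+1$ is adjacent to several vertices at distance $n$ through squares. One must check that the fixator of $\calB_n(v)$ restricted to $\Lk(w)$ is exactly the fixator of the ``inner'' link, a cubical subcomplex of the link given by a vertex set of $O_d$. One must also check that for boundary vertices in this combinatorial configuration, the pointwise stabilizer in $\Alt_{2d-1}$ is still nontrivial and perfect, or at least that commutators generate it. For this step I would first classify the possible inner-link patterns (single vertex plus star, or a clique-free set of $k$ vertices plus neighbours), then verify the simplicity claims using the transitivity of $\Alt_{2d-1}$ on the relevant configurations of $(d-1)$-subsets when $d\ge6$.
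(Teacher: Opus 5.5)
Your (1)$\Rightarrow$(3) is fine, but the non-discreteness claim in (3)$\Rightarrow$(2) is not justified. Nontriviality of $G^1_v|_{\calB_2(v)}$ does not imply non-discreteness: \Cref{local action options} allows a diagonal copy of $\Alt_{d-1}$, and that case is discrete. Nor can vertex transitivity ``propagate'' it. Hypothesis (3) only controls elements on $\calB_2$ of a vertex, and conjugating such elements around never produces a nontrivial element fixing a large ball $\calB_n(v)$. Getting $G^n_v\neq 1$ for all $n$ from (3) is essentially the whole theorem, unless you import a Thompson--Wielandt-type result for the $1$-skeleton, which you do not. The paper avoids this by ordering the implications as (1)$\Rightarrow$(2)$\Rightarrow$(3)$\Rightarrow$(1). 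There (2)$\Rightarrow$(3) is \Cref{local action options}, and non-discreteness under (3) comes for free from density.

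The inductive step of your (2)$\Rightarrow$(1) is where the real gap lies, for two reasons.

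First, transitivity fails. By \Cref{U_v^n is a product}, the factors of $\bbU=U^n_v|_{\calB_{n+1}(v)}$ are indexed by blocks of $\calS_n(v)$, not by vertices. For $n\ge 2$, not even $U_v$ is transitive on them; already $\calS_2(v)$ contains blocks of size one and of size two. So conjugation cannot spread a single nontrivial element to every factor, and non-discreteness does not tell you which orbits carry one.

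Second, Goursat/Hall alone does not give the whole product. $\bbG=G^n_v|_{\calB_{n+1}(v)}$ is normal in $G_v|_{\calB_{n+1}(v)}$, not in $\bbU$. A subgroup of $\prod_b\Alt_{d-1}$ projecting onto every factor may therefore still be a product of diagonals. This already bites in your base case, where the full diagonal has to be excluded by showing it forces discreteness.

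The paper supplies both missing ingredients:
\begin{itemize}
\item It shows $G^n_v|_{\calB_1(x)}\simeq\Alt_{d-1}$ for every partly free $x\in\calS_n(v)$ (\Cref{non trivial implies alt,intersection of path fix,Reduction to paths,local action in pf is alt}). The key idea is to re-centre at $\bar v=[v\nea x]\cap\calS_1(v)$ and use the induction hypothesis there. In the hard case this is combined with a factorization and order-counting argument that contradicts the product structure of $U^{n-1}_v|_{\calB_n(v)}$.
\item It then rules out diagonals. Inside a sector it uses normality in $G^{n-1}_a$, whose restriction is already a full product (\Cref{sector is universal}). Across sectors it uses \Cref{auxiliary claim for components in universal} together with a series of component subgroups.
\end{itemize}
As your last paragraph concedes, none of this geometry is carried out, so the main implication is not proved.
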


Assuming that such a uniform lattice $\Lambda'$ exists, the proof proceeds as follows: Since $\Lambda'$ is non-residually finite, its finite residue  $\Lambda=\bigcap \{H\le \Lambda' \;\mid\; |\Lambda':H|<\infty\}$  is a non-trivial normal subgroup, without any finite index subgroups.
By Bader-Shalom \cite{bader2006factor},  $\Lambda$ has finite index in $\Lambda'$ and 
is just-infinite, i.e., has no non-trivial infinite-index normal subgroups.  
We conclude that $\Lambda$ is simple. Note that since $\Lambda$ has finite index in the uniform lattice $\Lambda'$, $\Lambda$ is itself a uniform lattice, and as such it must be  finitely-presented.

We conclude this introduction with an outline of the paper: In \Cref{preliminaries} we cover the basics of CAT(0) cube complexes, normal paths and our notation, in \Cref{the davis complex} we define and discuss the Davis complex $X_{O_d}$, 
and define and prove relevant properties of the universal group of $X_{O_d}$, in \Cref{local to global section} we prove \Cref{local to global}. In \Cref{construction} we show how one can produce such lattices, and finally, we prove \Cref{simple Lattices In Products} in \Cref{proof section}. 


\section{CAT(0) cube complexes.}\label{preliminaries}

\subsection{CAT(0) cube complexes and hyperplanes. }

We provide a brief introduction to the basic definitions of CAT(0) cube complexes and their hyperplanes.
For more details and examples, we refer the reader to \cite{bridson2013metric,bestvina2014geometric}. 

\begin{definition}
     For a (CW) complex $X$, the \textit{$k$-th skeleton} of
     $X$, denoted by $X^k$ is the union of the cells of $X$ of dimension $\leq k$. 
\end{definition}

\begin{definition}
   A simplicial complex $\Sigma$ is \textit{flag} if every clique is the 1-skeleton of a simplex in $\Sigma$
\end{definition}

\begin{definition}
    For a complex $X$, the \textit{link} of a vertex $v\in X^{0}$, 
    $\Lk(v,X)$,  is the simplicial complex corresponding to a small sphere around $v$, that is, for every cube of dimension $i\geq 1$ incident to $v$, there is a unique simplex of dimension $i-1$ in $\Lk(v,X)$.
\end{definition}

\begin{definition}
    A \emph{cube complex} is a complex made up by gluing a collection of unit cubes $[0,1]^d$ (of varying dimensions $d$) along isometric faces.  

    A finite dimensional cube complex is \textit{CAT(0)} if it is connected, simply connected and the link of each of its vertices is a flag simplicial complex.
\end{definition}

\begin{remark}
    We note that CAT(0) is a metric property of non-positive curvature introduced by Gromov. He showed \cite{gromov1987hyperbolic} that when endowing each cube with the Euclidean metric, the obtained metric is CAT(0) if and only if the cube complex is CAT(0) in the sense of the previous definition. 
\end{remark}


\begin{definition}[Sageev \cite{sageev1995ends}]
    A \emph{midcube} of a cube $C=[0,1]^d$ is a subset of the form $\{\tfrac12\} \times [0,1]^{d-1}$.
    
    A \textit{hyperplane} in a CAT(0) space $X$, is the union of (non-empty) minimal collection of midcubes of the cubes of $X$ which is closed under inclusion in the following sense: if $M\subseteq M'$ are midcubes and one of them is in the collection then so is the other. 
    
    Each hyperplane is itself a CAT(0) cube complex, and separates $X$ into two components which are called \emph{halfspaces}.
\end{definition}

\subsection{The $\ell_\infty$ metric and normal paths}
CAT(0) cube complexes can be considered with various metrics. 
When endowing each cube with the $\ell_\infty$ metric, one obtains the so-called \emph{$\ell_\infty$ metric} on the cube complex. 
While geodesics in this metric are not unique, the following definition of ``normal paths'' due to Niblo-Reeves \cite[Definition 3.1]{niblo1998geometry} gives rise to a particular unique choice of $\ell_\infty$ geodesics (i.e., a geodesic bicombing).

\begin{definition}\label{normal cube path}
    A \emph{cube path} is a sequence $\{C_i\}_{i=1}^n$ of cubes of dimension at least one 
    with a corresponding vertex sequence $\{v_i\}_{i=0}^n$, such that $C_{i-1}\bigcap C_i=v_{i-1}$ for $2\leq i\leq n$, $v_0\in C_1$ is in the opposite end of $v_{1}$ and $v_n\in C_n$ is in the opposite end of $v_{n-1}$. 
    
    We say it is a \emph{normal cube path} if in addition 
 $\str(C_{i-1})\cap C_i=v_{i-1}$  
where $\str(C)$ is the  union of all closed cubes containing $C$.
\end{definition}

\begin{definition}\label{normal vertex path}
A \textit{normal (vertex) path}, $[v\nea x]$, from $v$ to $x$ is the sequence of vertices $\{v=v_0,v_1,\ldots, v_k=x\}$ corresponding to a normal cube path from $v$ to $x$. See \Cref{fig: normal path} (left).
 \end{definition} 

    \begin{figure}
        \centering
        \includegraphics[]{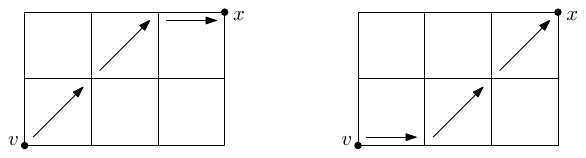}
        \caption{The normal paths $[v\nea x]$ (left) and $[v \sea x]$ (right)}\label{fig: normal path}
    \end{figure}

\begin{proposition}[{\cite[Proposition 3.3]{niblo1998geometry}}]
     For any two vertices $v,x\in X^0$, there is a unique  normal  path $[v\nea x]$ from $v$ to $x$.
\end{proposition}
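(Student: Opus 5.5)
The plan is the standard Niblo--Reeves argument, with existence and uniqueness handled separately. Both rest on one basic fact about CAT(0) cube complexes. For a vertex $v$ and a target vertex $x\neq v$, let $\calH(v,x)$ be the set of hyperplanes separating $v$ from $x$. Let $\calA(v,x)\subseteq\calH(v,x)$ be the hyperplanes in $\calH(v,x)$ that are dual to an edge incident to $v$. By the flag condition on $\Lk(v,X)$ and the fact that hyperplanes of $\calH(v,x)$ adjacent to $v$ pairwise cross, $\calA(v,x)$ spans a unique cube $C(v,x)$ containing $v$. This cube is the maximal cube at $v$ all of whose hyperplanes separate $v$ from $x$. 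The normal path is then obtained greedily: set $C_1=C(v,x)$, let $v_1$ be the vertex of $C_1$ opposite $v$, and iterate from $v_1$.

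\textbf{Existence.} First I check that the greedy process terminates at $x$. Passing from $v_{i-1}$ to $v_i$ crosses exactly the hyperplanes of $C_i$. Each of these lies in $\calH(v_{i-1},x)$, so $|\calH(v_i,x)|=|\calH(v_{i-1},x)|-\dim C_i$. Since $\calH(v,x)$ is finite and $\dim C_i\ge1$ whenever $v_{i-1}\neq x$ (some hyperplane separating $v_{i-1}$ from $x$ is adjacent to $v_{i-1}$, e.g.\ the first edge of a combinatorial geodesic), the process ends at $x$. Next I verify normality, $\str(C_{i-1})\cap C_i=v_{i-1}$. Suppose $C_i$ had an edge $e$ at $v_{i-1}$ lying in a cube $D\supseteq C_{i-1}$. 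Then the hyperplane $h$ dual to $e$ crosses all hyperplanes of $C_{i-1}$ and is adjacent to $v_{i-2}$ (translate $e$ across $D$). Also $h\in\calH(v_{i-1},x)\subseteq\calH(v_{i-2},x)$. So $C_{i-1}$ together with $h$ spans a larger cube at $v_{i-2}$ whose hyperplanes all separate $v_{i-2}$ from $x$, by flagness. This contradicts the maximality of $C_{i-1}$.

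\textbf{Uniqueness.} I will show that any normal cube path $\{C_i\}$ from $v$ to $x$ must satisfy $C_1=C(v,x)$; induction on $|\calH(v,x)|$ then finishes the proof. The argument has two steps.
\begin{enumerate}
\item Along a normal cube path, no hyperplane is crossed twice. Equivalently, the concatenated edge path is a combinatorial geodesic, so each $C_i$ consists of hyperplanes in $\calH(v_{i-1},x)$. This gives $C_1\subseteq C(v,x)$.
\item Suppose some hyperplane $h\in\calA(v,x)$ is not in $C_1$. Since $h$ separates $v$ from $x$, it is crossed by some $C_j$, and I take $j$ minimal. Then $h$ is adjacent to $v_{j-1}$ and to $v$, and it crosses every hyperplane of $C_1,\dots,C_{j-1}$. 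Using the standard fact that a hyperplane adjacent to both ends of a geodesic segment and crossing every hyperplane along it is carried along the segment, $h$ is dual to an edge at $v_{j-2}$ which spans a cube with $C_{j-1}$. That edge lies in $\str(C_{j-1})$. If $j\ge 3$, induction along the path pushes $h$ back until $h$ spans a cube with $C_1$ at $v$, which is the contradiction we want. If the push lands instead at the first step where the normality condition applies, it directly violates $\str(C_{j-1})\cap C_j=v_{j-1}$.
\end{enumerate}

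I expect the main obstacle to be this last step: showing that a hyperplane adjacent to $v$ and crossing all earlier hyperplanes of the path stays adjacent along the path. It needs a careful use of the quadrant/flag argument in CAT(0) cube complexes. Specifically, if two hyperplanes cross and both are adjacent to a vertex in the common quadrant, then they span a square there. Proving step 1 (no hyperplane crossed twice) relies on similar reasoning, namely a minimal-counterexample argument on the first repeated hyperplane using normality. Since the paper states the result with a citation to \cite{niblo1998geometry}, I would follow their Proposition 3.3 for these technical lemmas.
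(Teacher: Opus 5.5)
The paper gives no proof of this proposition; it only cites Niblo--Reeves, so I am judging your argument against the standard one. Your existence part is correct: the greedy choice of $C(v,x)$, termination, and normality via maximality of $C_{i-1}=C(v_{i-2},x)$. The gap is at the end of uniqueness Step 2. After showing that $h$ is adjacent to $v$ and to $v_{j-1}$ and crosses every hyperplane of $C_1,\dots,C_{j-1}$, you push $h$ back to $v$ and call ``$h$ spans a cube with $C_1$ at $v$'' a contradiction. It is not one. Normality only constrains consecutive pairs $(C_{i-1},C_i)$. An edge at $v$ spanning a cube with $C_1$ is exactly the situation $C_1\subsetneq C(v,x)$ that you are trying to exclude, and in that situation it always occurs. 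Likewise, the fact that the $h$-edge at $v_{j-2}$ lies in $\str(C_{j-1})$ is harmless by itself, because that edge is not in $C_j$. So as written, the case $j\ge 3$ does not close, and the last sentence of Step 2 does not say where normality actually fails.

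The contradiction lives at $v_{j-1}$, and no induction is needed. Let $g\subseteq C_j$ be the edge at $v_{j-1}$ dual to $h$. By Step 1 and minimality of $j$, each hyperplane $k$ of $C_{j-1}$ separates $v_0$ from $v_{j-1}$, while $h$ does not. Compare $v_0$, $v_{j-1}$ and their neighbours across $h$: $k$ does not separate either vertex from its $h$-neighbour, so all four quadrants of $(h,k)$ are nonempty and $h$ crosses $k$. Now $h$ and all hyperplanes of $C_{j-1}$ are adjacent to $v_{j-1}$ and pairwise cross. No inter-osculation together with flagness then gives a cube containing $C_{j-1}$ and $g$, so $g\subseteq\str(C_{j-1})\cap C_j$, contradicting normality. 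In your own version this is the missing observation: the cube at $v_{j-2}$ spanned by $C_{j-1}$ and the $h$-edge contains the parallel edge at $v_{j-1}$, and that edge is $g$, since a hyperplane has at most one dual edge at a vertex.

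The same one-step argument also proves your deferred Step 1. Take a repeated pair $C_i,C_j$ with $j-i$ minimal. If $j=i+1$, the two edges at $v_i$ dual to $h$ coincide, contradicting $C_i\cap C_{i+1}=v_i$. If $j\ge i+2$, the path from $v_i$ to $v_{j-1}$ is geodesic, and the argument above at $v_{j-1}$ applies.
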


Intuitively, the normal path $[v\nea x]$ is the path obtained by starting at $v$ and at each step go across the longest diagonal of a cube that decreases the distance to $x$.

\begin{definition}
    If in \Cref{normal cube path,normal vertex path} the cube path $\{C_i\}_{i=1}^k$ satisfies $C_{i-1}\cap \str(C_i)=v_{i-1}$, we denote the corresponding vertex path $\{v_i\}_{i=0}^k$ by $[v\sea x]$. See \Cref{fig: normal path} (right).
\end{definition}

The path $[v\sea x]$ is nothing more than the reverse path of $[x\nea v]$.
We remark that in general the normal paths  $[v\nea x]$ and $[v\sea x]$ are not equal as sets of vertices, but their length is the same, since both normal paths are geodesics with respect to the $\ell_\infty$ metric. 
We also observe that if $u\in [v\nea x]$ then $[v\nea u]\cup[u\nea x] = [v\nea x]$, and similarly for $u\in [v\sea x]$.

Therefore, we consider the $\ell_\infty$ metric on $X$, and denote as usual: 
$$d(v,x)= \text{ the length of the normal path  }[v\nea x].$$

For a set of vertices $A\subseteq X^{0}$, and a vertex $x\in X^{0}$, $$d(A,x):=\min_{a\in A}d(a,x).$$ 
We denote the 1-neighbourhood of $A$ by 
$$\calN_1(A)=\{x\in X^{0}\ | \ d(A,1)=1 \}$$
Also, 
for $v\in X^{0}$, $n\in\bbN$ we denote the $n$-th sphere, and $n$-th ball around $v$ respectively by 
\begin{align*}
\calS_n(v)&=\{x\in X^{0}\ | \ d(v,x)=n  \}\\
\calB_n(v)&=\{x\in X^{0}\ | \ d(v,x)\leq n \}
\end{align*}

Where  by abuse of notation, we denote by $\calN_1(A), \calS_n(v), \calB_n(v)$ also the corresponding induced subcomplex, that is, a higher dimensional cube is contained in this set if its vertices are contained in it. 

\subsection{The structure of spheres in CAT(0) square complexes}

Throughout the rest of this section, let $X$ be a two-dimensional CAT(0) square complex and 
$v\in X^{0}$. 
\begin{definition} \cite[Definition 1.7]{  ballmann1994polygonal}
    A vertex $x\in \calS_n(v)$ is called \textit{partly free} if: 
    \begin{itemize}
        \item There is exactly one edge $e$ incident to $x$, connecting it to some vertex $y\in \calS_{n-1}(v)$, and
        \item for any other edge $f$ incident to $x$, connecting it to a vertex $z\in \calS_n(v)$, there is a unique square $P$ with $f,e\in \partial P$. Equivalently, $\Lk(x,\calB_n(v))=\calN_1(\epsilon)$ where $\epsilon$ is the vertex in the link corresponding to the edge $e$ in the complex.
    \end{itemize}

    A vertex $x\in \calS_n(v)$ is called \textit{free} if it is not partly free, and there is exactly one square $P\in \calB_n(v)$ incident to $x$, with the two edges  in $\calS_n(v)$ incident to $x$, are in $\partial P$. Equivalently, $\Lk(x,\calB_n(v))$ is an edge. 
\end{definition}

    \begin{figure}
        \centering
        \includegraphics[]{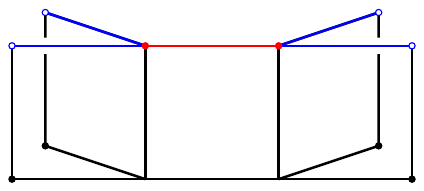}
        \caption[Free and partly free vertices]{The partly free vertices are shown in full circles, while free ones are empty circles. 
        The vertices of a block are shown in red, and its extension is obtained by adding the blue vertices.}
        \label{fig: partly free vertices}
        \label{fig: free vertices}
    \end{figure}

 See \Cref{fig: partly free vertices,fig: free vertices} for example.

\begin{remark}\label{the neighbours of a free vertex are partly free}
    By these definitions if $x\in \calS_n(v)$ is free, then it has exactly two neighbours in $\calS_n(v)$ , and they are both partly free. 
\end{remark}

\begin{remark} 
By \cite[Theorem 1.5 and Remark 1.9]{ballmann1994polygonal} all vertices in any sphere are either free or partly free. 
\end{remark}

The next lemma relates this definition to normal paths.
    \begin{lemma}\label{normal paths and pf}
        Let $x\in \calS_n(v)$, and let $y$ be the penultimate vertex in $[v\nea x]$. 
        If $x$ is partly free then $y$ is the unique neighbour of $x$ in $\calS_{n-1}(v)$ (i.e., $C_n$ is the unique edge incident to $x$ connecting it to $\calS_{n-1}(v)$), and if $x$ is free, then $y$ is the unique vertex in $\calS_{n-1}(v)\cap\calB_1(x)$ (i.e., $C_n$ is the unique square in $\calB_n(v)$ incident to $x$).
    \end{lemma}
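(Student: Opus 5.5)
The plan is to read off the last cube $C_n$ of the normal cube path $[v\nea x]$ directly from the local picture at $x$. Equivalently, we determine the first cube of the reversed path $[x\sea v]$. Since $X$ is two-dimensional, $C_n$ is either an edge or a square.

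\textbf{Step 1.} Since normal paths are $\ell_\infty$-geodesics, the penultimate vertex $y$ satisfies $d(v,y)=n-1$. Thus $y\in \calS_{n-1}(v)\cap \calB_1(x)$, and $y$ is the vertex of $C_n$ opposite to $x$.

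\textbf{Step 2: $x$ partly free.} Here $\Lk(x,\calB_n(v))=\calN_1(\epsilon)$, where $\epsilon$ corresponds to the unique edge $e=[x,y_0]$ with $y_0\in\calS_{n-1}(v)$. Suppose $C_n$ were a square. The vertex of $C_n$ opposite $x$ lies in $\calS_{n-1}(v)$, so both edges of $C_n$ at $x$ lead into $\calB_{n}(v)$. The corresponding square appears in the link of $x$ in $\calB_n(v)$, and hence it must contain the link vertex $\epsilon$. So $C_n$ is a square containing $e$.

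We then use the normality condition at $y_0$, namely $\str(C_{n-1})\cap C_n=v_{n-1}$. Equivalently, by maximality of the cubes in a normal cube path, the step from $v_{n-1}$ could not have been extended. The point is that $y_0$ would lie on $[v\nea x]$ before the diagonal, so one could jump $v_{n-1}\to y_0$ via an edge of $C_n$. This contradicts the defining property of normal paths: cubes are chosen maximal, so if $[v\nea x]$ passes to $x$ through a square $C_n$, then $C_{n-1}\cup C_n$ violates $\str(C_{n-1})\cap C_n=v_{n-1}$.

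More cleanly, I would argue with distances. In a square $C_n$ with corners $y, a, x, b$ and $d(v,y)=n-1$, the vertices $a$ and $b$ are both at distance $n$ from $v$ if the square is the normal step. Indeed, if $a$ were at distance $n-1$, then the normal path from $v$ to $x$ would end with the edge $a$–$x$ instead: $[v\nea x]$ is characterized by the fact that $[x\sea v]$ starts with the maximal cube at $x$ spanned by edges decreasing the distance to $v$. So $C_n$ is exactly the cube spanned by the edges at $x$ pointing into $\calS_{n-1}(v)$, together with its diagonal vertex. For partly free $x$ there is exactly one such edge, so $C_n=e$ and $y=y_0$.

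\textbf{Step 3: $x$ free.} Now $x$ has no neighbour in $\calS_{n-1}(v)$. Otherwise its link in $\calB_n(v)$ would contain a cone point, or the configuration would force $x$ to be partly free; this uses the Ballmann–Brin dichotomy. Then $C_n$ cannot be an edge, so it is a square in $\calB_n(v)$ incident to $x$. By the definition of free, the unique such square is $P$, and $y$ is its vertex opposite $x$. Moreover, any vertex of $\calS_{n-1}(v)\cap\calB_1(x)$ is the diagonal of some square at $x$ inside $\calB_n(v)$, so it is the diagonal of $P$, which gives uniqueness.

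\textbf{Main obstacle.} The key point is justifying the characterization that the first cube of $[x\sea v]$ is the cube spanned by the edges at $x$ whose endpoints are closer to $v$. I would derive this from the Niblo–Reeves construction of normal paths via hyperplanes, namely that $C_n$ consists of the hyperplanes adjacent to $x$ separating $x$ from $v$ that are "last". I would then translate this into $\ell_\infty$ distances in dimension two.
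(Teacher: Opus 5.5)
Your Step~1 and your free case are fine and match the paper, which disposes of the free case and of $n=1$ using only $y\in\calS_1(x)\cap\calS_{n-1}(v)$. The gap is in the partly free case, which is where the actual content lies. You correctly reduce to excluding that $C_n$ is a square through $e=[x,y_0]$ with diagonal $y=v_{n-1}$, but neither of your arguments excludes it.

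\textbf{First argument.} The normality condition $\str(C_{n-1})\cap C_n=\{v_{n-1}\}$ only gives a contradiction once you know $y_0\in\str(C_{n-1})$. That depends on the shape of $C_{n-1}$, which you never determine. Also, $y_0$ is not a vertex of $[v\nea x]$. And maximality at $v_{n-1}$ itself gives nothing, since both hyperplanes of the square $C_n$ do separate $v_{n-1}$ from $x$.

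\textbf{Second argument.} Your ``cleaner'' distance argument rests on a characterization of the last cube that is not a property of normal paths. The path $[v\nea x]$ is built greedily from $v$: each $C_i$ is determined by $v_{i-1}$ and $x$. So $C_n$ cannot simply be read off from the local picture at $x$. The greedy-from-$x$ description belongs to $[x\nea v]$, i.e.\ to $[v\sea x]$, which is in general a different path. For example, in $\mathbb{Z}^2$ with $v=(0,0)$ and $x=(2,1)$, the hyperplane version gives a square at $x$, while $[v\nea x]$ ends with an edge. In your $\ell_\infty$ form (``the cube spanned by the edges at $x$ pointing into $\calS_{n-1}(v)$'') the characterization fails for free $x$. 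For partly free $x$ it is literally the statement to be proved, so this step is circular.

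\textbf{What is missing} is induction on $n$, applied to $y=v_{n-1}\in\calS_{n-1}(v)$. Since $y_0$ and $y$ are adjacent corners of $C_n$, $[y,y_0]$ is an edge of $\calS_{n-1}(v)$ at $y$.
\begin{itemize}
\item If $y$ is partly free, the induction hypothesis makes $C_{n-1}$ the edge from $y$ to its unique neighbour in $\calS_{n-2}(v)$. The partly free condition at $y$ then gives a square containing $C_{n-1}$ and $[y,y_0]$, so $y_0\in\str(C_{n-1})$.
\item If $y$ is free, $C_{n-1}$ is the unique square of $\calB_{n-1}(v)$ at $y$. This square contains both sphere edges at $y$, hence $y_0\in C_{n-1}$.
\end{itemize}
Either way $\{y,y_0\}\subseteq\str(C_{n-1})\cap C_n$, so normality forces $y=y_0$. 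This is the paper's proof. Some such control over $C_{n-1}$ is indispensable, and your proposal does not supply it.
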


    \begin{proof}
        By induction on $n$. Let $\{v_i\}_{i=0}^n$ and $\{C_i\}_{i=0}^n$ be the vertex and cube normal paths from $v$ to $x$. So $v_{n-1} = y$.
        Since $[v\nea x]$ is an $\ell_\infty$ geodesic, we have $y\in \calS_1(x)\cap \calS_{n-1}(v)$.
        This proves the case that $x$ is free and also the case $n=1$. 
        
        For $n>1$, if $x$ is partly free, let $y'$ be its neighbour in $\calS_{n-1}(v)$. Let us show that $y=y'$: The vertices $x,y,y'$ belong to $C_n$.
        By the induction hypothesis it is not difficult to see that  $\str(C_{n-1})$ must contain both $y,y'$. Since $\{C_i\}_{i=1}^n$ is a normal path we have $\{y,y'\}\subseteq \str(C_{n-1})\cap C_n = \{y\}$, and so $y=y'$ as desired.
    \end{proof}

    \begin{claim}\label{no 3 consecutive} 
    Assume $X$ is a CAT(0) square complex in which the shortest cycle in the link at each vertex is at least five, 
    then there cannot be three consecutive partly free vertices in the same sphere.    
    \end{claim}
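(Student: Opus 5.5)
Suppose for contradiction that $x_1,x_2,x_3\in\calS_n(v)$ are consecutive, i.e.\ $x_1x_2$ and $x_2x_3$ are edges lying in $\calS_n(v)$, and that all three are partly free. Let $y_i\in\calS_{n-1}(v)$ be the unique neighbour of $x_i$ in the previous sphere. By the definition of partly free, applied at $x_2$, there is a unique square $P_{12}$ containing the edges $x_2y_2$ and $x_2x_1$, and a unique square $P_{23}$ containing $x_2y_2$ and $x_2x_3$. The fourth vertex of $P_{12}$ is adjacent to $x_1$ and to $y_2$. It cannot lie in $\calS_n(v)$ or $\calS_{n+1}(v)$, since its neighbour $y_2$ is at distance $n-1$, and a vertex of $\calS_n(v)$ adjacent to $y_2$ through a square diagonal would contradict the structure. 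Since $x_1$ is partly free, this fourth vertex must be $y_1$. The same argument shows the fourth vertex of $P_{23}$ is $y_3$. So $y_1y_2$ and $y_2y_3$ are edges in $\calB_{n-1}(v)$, and they lie in $\calS_{n-1}(v)$.

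Next I look at the link $\Lk(y_2,X)$. It contains the vertex $\epsilon=[y_2x_2]$, with two neighbours $\alpha=[y_2y_1]$ (through $P_{12}$) and $\gamma=[y_2y_3]$ (through $P_{23}$). It also contains the link vertices corresponding to edges of $y_2$ going into $\calB_{n-1}(v)$. I then use the classification of $y_2\in\calS_{n-1}(v)$ as free or partly free (Ballmann--Brin) together with \Cref{normal paths and pf}.

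\textbf{Case $y_2$ partly free.} Then $\Lk(y_2,\calB_{n-1}(v))=\calN_1(\epsilon')$, where $\epsilon'$ is the edge to $\calS_{n-2}(v)$. Hence $\alpha$ and $\gamma$ are both adjacent to $\epsilon'$, and $\alpha,\epsilon,\gamma,\epsilon'$ is a 4-cycle in the link. If $\alpha\ne\gamma$, this contradicts girth at least $5$. If $\alpha=\gamma$, then $y_1=y_3$ and the squares $P_{12},P_{23}$ both contain $y_2,x_2$ and $y_1$. Then $x_1=x_3$ or there are two squares sharing two edges, which is a bigon in $\Lk(y_1)$ and contradicts CAT(0)/flag.

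\textbf{Case $y_2$ free.} Then $\Lk(y_2,\calB_{n-1}(v))$ is a single edge, so its two endpoints are exactly $\alpha$ and $\gamma$. Thus $\alpha\gamma$ is an edge and $\alpha,\epsilon,\gamma$ is a triangle in the link, again contradicting girth at least $5$ (flagness would even force a 3-cube in a 2-dimensional complex).

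The main obstacle I expect is the first step: rigorously justifying that the fourth vertex of $P_{12}$ lies in $\calS_{n-1}(v)$ and equals $y_1$. To settle this I will use distances along the square: the fourth vertex $w$ is adjacent to $y_2$, so $d(v,w)\le n$. Also $x_2$ is diagonal to $w$ within $P_{12}$, so $d(v,w)\ge n-1$. If $d(v,w)=n$, then $x_1$ would have $d(v,x_1)$ compatible with $x_1$ having no neighbour in $\calS_{n-1}(v)$ except through a diagonal, which contradicts $x_1$ partly free. I would make this precise using the $\ell_\infty$ normal-path characterization of \Cref{normal paths and pf}.
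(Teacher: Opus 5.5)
Your proposal is essentially the paper's proof: the squares $P_{12},P_{23}$ sharing the edge $x_2y_2$ give a $4$-cycle in $\Lk(y_2,X)$ when $y_2$ is partly free and a triangle when $y_2$ is free. Your extra treatment of $y_1=y_3$, which the paper omits, is fine, though the doubled edge lives in $\Lk(y_2,X)$, not $\Lk(y_1,X)$. The case $n=1$ should be dispatched separately, as the paper does: two adjacent partly free vertices of $\calS_1(v)$ would span a triangle with $v$.

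The step you flag follows from the link form of partly-freeness, not from distances. Your distance heuristic fails as stated, since for instance $x_2\in\calS_n(v)$ is itself adjacent to $y_2$. Instead, $P_{12}\subseteq\calB_n(v)$ gives an edge of $\Lk(x_1,\calB_n(v))=\calN_1(\epsilon_1)$. Since the link has no triangles, this edge must contain $\epsilon_1=[x_1y_1]$, so the fourth vertex of $P_{12}$ is $y_1$.
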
 

    \begin{proof}
        For $n=1$, any partly free vertex $x\in \calS_1(v)$ is connected by an edge $e_x$ to $v$. If there were two consecutive partly free vertices, say $x,y$, then $\{\{x,y\},e_x,e_y \}$ form a triangle in our square complex. So there are no two consecutive partly free vertices in $\calS_1(v)$, and in particular, there can not be three. 
     
        Let $n\geq 2$, and  assume to the contrary. Let $x\sim y\sim z$ be three consecutive partly free vertices in $\calS_n(v)$. 
        Here $x\sim y$ mean that $x,y$ are adjacent vertices in the sphere, and in particular, in a square. Because all $x,y,z$ are partly free, they are connected by an edge to a vertex in $\calS_{n-1}(v)$, denote these vertices by $x',y',z'$ respectively. 
        
        We now have the following squares: 
        $P_1=\{x',x,y,y'\}$ and $P_2=\{y',y,z,z'\}$ where $P_1,P_2$ share a common edge $\{y,y'\}$.

        If $y'\in \calS_{n-1}(v)$ is partly free, then from the definition, there is an edge connecting it to a vertex $y''\in \calS_{n-2}(v)$ (we are fine with the case that $n-2=0$, and $y''=v$).
        Since $x'\sim y'\sim z'$, and $y'$ is partly free, this gives us two more squares $P_3$ and $P_4$ with 
        $\{x',y'\},\{y',y''\}\in \partial P_3$ and $\{z',y'\},\{y',y''\}\in \partial P_4$. In particular, $\{ P_1,P_2,P_3,P_4\}$ are all incident to $y'$ and correspond to a 4-cycle in   $\Lk(y',X)$. This is a contradiction to our assumption, so $y'$ can not be partly free.

        If $y'$ if free, then  by \Cref{the neighbours of a free vertex are partly free} we have that $x',z'\in \calS_{n-1}(v)$ are partly free. But if $y'$ is free, and  $x',z'$ are its unique neighbours in $\calS_{n-1}(v)$, then there is a square $P$, such that $\{x',y'\},\{y',z'\}\in \partial P$. We now have the set $\{P_1,P_2,P\}$ of three squares, all incident to $y'$, where every two of them share a common edge. This correspond to a triangle in  $\Lk(y',X)$, contradiction. 
    \end{proof}




\subsection{Sectors and blocks}

    \begin{definition}
        Define a \textit{(partly free)  block}  in $\calS_n(v)$ to be a  connected component of the subgraph of $\calS_n(v)$ consisting only of partly free vertices.

        We define an \textit{extended block} in $\calS_n(v)$ to be a  block together with all its  adjacent free vertices in $\calS_n(v)$.
    \end{definition}

    \Cref{fig: free vertices} shows an example a block and its extended block.

For $v\in X^{0}$, $n\in\bbN$, denote by $P_n(v)$ the set of partly free vertices in $\calS_n(v)$.
\begin{definition}
    For any vertex $a\in \calS_1(v)$, define the \textit{$n$-th sector of $a$} to be $\calS_n(v,a):=\{x\in P_n(v)\ |\  a\in [v\sea x]  \}$. 
\end{definition}

That is, the sector $\calS_n(v,a)$ is the set of partly free vertices in $\calS_n(v)$ that one reaches from $v$ by first passing through $a$ in a normal path $[v\sea a]$.

\begin{claim}\label{blockes are not diveded by sectors}
    Assume $X$ is a CAT(0) square complex in which  
    the shortest cycle in the link at each vertex is at least five,
    then for every block $b\subseteq \calS_n(v)$ there is a unique $a\in\calS_1(v)$ such that $b\subseteq\calS_n(v,a)$.
\end{claim}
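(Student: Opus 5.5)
The plan is to attach to every partly free vertex $x\in\calS_n(v)$ a well-defined label $a(x)\in\calS_1(v)$, and then show by induction on $n$ that adjacent partly free vertices carry the same label. Since $[v\sea x]$ is unique (it is the reverse of the unique normal path $[x\nea v]$), its vertex at distance one from $v$ is a unique $a(x)\in\calS_1(v)$. Then $x\in\calS_n(v,a)$ if and only if $a=a(x)$, so the sectors partition $P_n(v)$. This gives uniqueness for free. For existence it suffices to show that $a$ is constant along edges between partly free vertices of $\calS_n(v)$, because a block is connected.

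\textbf{Step 1 (descending one level).} Let $x\in P_n(v)$ and let $x'$ be its unique neighbour in $\calS_{n-1}(v)$. By \Cref{normal paths and pf}, applied in the reversed form describing the first cube of $[x\nea v]$, the path $[x\nea v]$ starts with the edge $\{x,x'\}$. The point is that this edge is the only edge at $x$ whose hyperplane separates $x$ from $v$, so no square at $x$ can be the first cube. Hence $[v\sea x]=[v\sea x']\cup\{x\}$, and therefore $a(x)=a(x')$. Similarly, if $x$ is free with square $C$ in $\calB_n(v)$ and $z$ is the corner of $C$ opposite $x$ (so $z\in\calS_{n-2}(v)$), then $[x\nea v]$ starts with the diagonal of $C$, and $a(x)=a(z)$.

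\textbf{Step 2 (induction).} For $n=1$ the argument in the proof of \Cref{no 3 consecutive} shows that two partly free vertices of $\calS_1(v)$ are never adjacent, so blocks are singletons and there is nothing to prove. For $n\ge 2$, let $x\sim y$ be adjacent in $P_n(v)$ with lower neighbours $x',y'$. By the partly free condition, the edge $\{x,y\}$ lies in a unique square containing $\{x,x'\}$, and that square also contains $\{y,y'\}$. So we get a square $\{x',x,y,y'\}$ with $x'\ne y'$ and $x'\sim y'$ in $\calS_{n-1}(v)$. By Step 1, $a(x)=a(x')$ and $a(y)=a(y')$.

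We now split into cases on the types of $x'$ and $y'$.
\begin{itemize}
\item If both are partly free, the induction hypothesis gives $a(x')=a(y')$.
\item They cannot both be free, by \Cref{the neighbours of a free vertex are partly free}.
\item Suppose, say, $x'$ is free and $y'$ is partly free. Then $y'$ is one of the two sphere-neighbours of $x'$, so the edge $\{x',y'\}$ lies in the free square $C$ of $x'$. Let $y''$ be the lower neighbour of $y'$. Since $y'$ is partly free, there is a unique square containing the edges $\{x',y'\}$ and $\{y',y''\}$. But $C$ is a square containing $\{x',y'\}$ together with an edge from $y'$ down to $\calS_{n-2}(v)$, and that edge must be $\{y',y''\}$ by uniqueness of the lower edge. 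Hence $C$ is that square, and $y''=z$ is the corner of $C$ opposite $x'$. By Step 1 applied to $x'$ (free) and to $y'$ (partly free), $a(x')=a(z)=a(y'')=a(y')$.
\end{itemize}
In every case $a(x)=a(y)$, which completes the induction.

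The main obstacle is Step 1: making rigorous that the first cube of $[x\nea v]$ is the lower edge when $x$ is partly free, rather than some square at $x$, and the diagonal of the free square when $x$ is free. I would derive this from the Niblo--Reeves description of the first cube as the maximal cube at $x$ all of whose hyperplanes separate $x$ from $v$, combined with \Cref{normal paths and pf}.
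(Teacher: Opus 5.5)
Your overall frame (the label $a(x)$, uniqueness, reduction to adjacent partly free vertices, the case split on the lower neighbours $x',y'$) essentially matches the paper's. However, Step 1 is false, not merely hard to prove, and it fails exactly in the configuration of your third bullet.

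Already for $n=2$ there is a counterexample. Let $x'\in\calS_1(v)$ be free with free square $C=\{v,y',x',s\}$. Let $\{x',y',y,x\}$ be another square containing the edge $\{x',y'\}$; such a square exists, e.g., in $X_{O_d}$, since the vertex of $\Lk(x',X)$ corresponding to $\{x',y'\}$ has degree $d\ge 2$. Then $\{x,y\}$ is a block of $\calS_2(v)$ with lower neighbours $x'$ (free) and $y'$ (partly free). The path $x,x',v$ is not normal: the star of the edge $\{x,x'\}$ contains the square $\{x,x',y',y\}$, which meets $C$ in the edge $\{x',y'\}$. Hence $[x\nea v]=(x,y',v)$, and $a(x)=y'\neq x'=a(x')$.

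In hyperplane terms, the hyperplane dual to $\{x,y\}$ is also dual to $\{x',y'\}$, which is a wall of $C$. So it separates $x'$, and hence $x$, from $v$; the lower edge is not the only edge at $x$ whose hyperplane separates $x$ from $v$. The same happens at every level $n$ whenever $x'$ is free and $x$ has a block-mate. Nor is there a ``reversed form'' of \Cref{normal paths and pf}. That lemma locates the penultimate vertex of $[v\nea x]$, and here $[v\nea x]=(v,x',x)$ while $[v\sea x]=(v,y',x)$.

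Consequently your third bullet rests on two false equalities. The first is $a(x)=a(x')$, as just shown. The second is $a(x')=a(y')$: for $n=2$ these are the distinct vertices $x'\neq y'$ of $\calS_1(v)$, and in general your route to it, $a(y')=a(y'')$, is again an instance of Step 1.

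The conclusion $a(x)=a(y)$ is nevertheless true, for a different reason, which is exactly the paper's Case 2: both $[v\sea x]$ and $[v\sea y]$ pass through the \emph{partly free} vertex $y'$. Here take $C$ to be the free square of $x'$ and $z$ its corner opposite $x'$. For $x$, this is the non-normality of $x,x',z$ shown above. For $y$, the only alternative to $y'$ is the diagonal step to $x'$, after which the path must continue through $C$ to $z$. This violates normality because the squares $\{x,x',y',y\}$ and $C$ share the edge $\{x',y'\}$.

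To repair the proof, drop Step 1 and use only the true fact that the penultimate vertex of $[v\sea x]$ lies in $\calS_{n-1}(v)\cap\calB_1(x)=\{x',y'\}$. Together with the induction hypothesis this settles your first bullet: whichever of $x',y'$ the path uses, they share a label. The mixed case is handled by the normality argument above.

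A minor slip: for free $x\in\calS_n(v)$, the corner opposite $x$ in its square lies in $\calS_{n-1}(v)$, not $\calS_{n-2}(v)$.
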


\begin{proof}
    Uniqueness is clear since different sectors are disjoint.
    
    We prove existence by induction on $n$.

    Let $b\subseteq \calS_n(v)$ be a block. 
    By \Cref{no 3 consecutive}, $|b|\leq 2$.
    If $b=\{x\}$, just take $a\in\calS_1(v)\cap [v\sea x]$.
    This also shows the base case $n=1$, as all blocks in $\calS_1(v)$ have size one. 
    
    Now, assume $n>1$ and $b=\{x,y\}$ for $x\ne y$.
    Let $x',y'$ be the neighbouring vertices to $x,y$ in $\calS_{n-1}(v)$.

    Observe that $\calS_{n-1}(v)\cap \calB_1(x) =  \{x',y'\}$, and thus $[v\sea x] \cap \{x',y'\}\ne \emptyset$.
    Similarly, $[v\sea y] \cap \{x',y'\}\ne \emptyset$.

    \textbf{Case 1.} If $x',y'$ are partly free then they form a block in $\calS_{n-1}(v)$, and by induction  there exists $a\in \calS_1(v)$ such that $\{x',y'\}$ is contained in the sector $\calS_{n-1}(v,a)$ and so $\{x,y\}$ are contained in the sector $\calS_n(v,a)$.
    
    \textbf{Case 2.} One of $x',y'$ is free --- say $y'$. We will show that $x'\in [v\sea x] \cap [v\sea y]$ and therefore $b\subseteq \calS_n(v,a)$ for $a\in \calS_1(v)\cap [v\sea x']$.
    
    Since $y'$ is free, $x'$ is partly free. Let $x''$ be the neighbour of $x'$ in $\calS_{n-2}(v)$. See \Cref{fig:blocks_in_proof}.
    Necessarily, $\calB_1(y') \cap \calS_{n-2}(v) = \{x''\}$, and so $x''\in [v\sea y']$.
    The vertex $x' \in [v\sea y]$, as otherwise $y'\in [v\sea y]$ but the path $y,y',x''$ is not a normal path.
    
    A similar argument shows that $x'\in [v\sea x]$.
\begin{figure}
    \centering
    \includegraphics[]{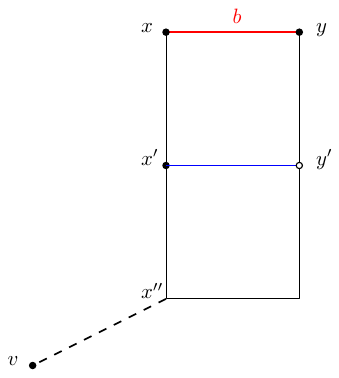}
    \caption{The block $b$ and its neighbours in Case 2.}
    \label{fig:blocks_in_proof}
\end{figure}
\end{proof}

\section{Davis complexes and universal groups}\label{the davis complex}

\subsection{The Davis complex} 
In this subsection we will define the Davis complex $X_L$, and describe its geometry.

\begin{definition}
For a finite graph $L=(V,E)$, define  the corresponding \emph{right-angled Coxeter group} by $$W_L=RACG (L) := \gen{  V(L) \  \middle| \begin{matrix} \  a^2,\  \forall a\in V(L),\\ \ [a,b],\  \forall a\sim b \end{matrix}} $$
where $a\sim b$ if $a$ and $b$ are neighbours in $L$.
\end{definition}

\begin{definition}[Davis \cite{davis1998cohomology}]\label{Davis complex}
The (right-angled) \emph{Davis complex} $X_L$ associated to $L$ is the CAT(0) cube complexes constructed in the following way:
Consider the Cayley graph $A_L$ associated to $W_L$ where we collapse the bigons, corresponding to the generators, to an edge (all generators are of order 2). Then,     fill in a cube whenever the 1-skeleton of one appears in $A_L$. 
\end{definition}

The link of a vertex $v\in X_L^{0}$ is the flag completion of the graph $L$ (i.e., the minimal flag simplicial complex containing $L$). As we will only be interested in two-dimensional CAT(0) cube complexes, let us explain this for the case that $L$ is triangle free.  
Indeed, every vertex in $L$ corresponds to a unique edge incident to $v$ (by the definition of Cayley graph and $W_L$) which corresponds to a unique vertex in $\Lk(v,X_L)$, and vice versa. 
For the edges, an edge $e\in E(L) $ between $a$ and $b$ in $V(L)$ corresponds to the relation $[a,b]$ in the presentation $W_L$, this corresponds to a 4-cycle in the Cayley graph with the edges labelled  $\{a,b,a,b\}$ ($a^{-1}=a$), which we fill in to get $X_L$, therefore a corresponding edge appears in  $\Lk(v,X_L)$. For the other way around, an edge in the link corresponds to a filled square, which is every 4-cycle, and the only 4-cycles are the ones of the form $\{a,b,a,b\}$, which corresponds to an edge in $L$. 

\begin{definition} \label{identification between link and L}
For any $v\in X_L^0$, denote by $c_v:\Lk(v,X_L) \to L$ the identification discussed above.
\end{definition}

\begin{remark} 
If $L$ has $m$ vertices and no edges, then $X_L$ is the $m$-regular tree.
\end{remark}

    To every hyperplane $\calH$ in $X_L$ we can assign the label $a\in V(L)$ of an edge $e\in X_L^1$ transverse to $\calH$ in $X_L$.
    The label of $\calH$ is well defined since the edges transverse to $\calH$ are the edges in the parallelism class of $e$ (i.e., the classes of the relation generated by being parallel edges in the same cube) and by construction, all the edges in a parallelism class have the same label. 
    
    If the label of $\calH$ is $a\in V(L)$ then $\calH$ is isomorphic to the Davis complex associated with the link $\Lk(a,L)$ of $a$ in $L$. Its carrier $N(\calH)$ --- that is, the minimal subcomplex which contains $\calH$ --- is isomorphic to the Davis complex associated with the star $\str(a,L)$ of $a$ in $L$.

\subsection{The universal group and local actions}\label{the universal group}

The following  definition was introduced in \cite{burger2000groups} for a regular tree $X=T_m$, we generalize it to Davis complexes. 

\begin{definition}[Local actions and universal groups] \label{def: universal group}
Let $L$ be a finite graph, and let $X_L$ be its associated Davis complex. For $g\in \Aut(X_L)$ and $v\in X_L^0$  denote by $\bfd_vg:\Lk(v,X_L) \to \Lk(gv,X_L)$ the isomorphism $g$ induces on the link at $v$.
\emph{The local action of $g$ at $v$} is the automorphism $$\bfD_vg= c_{gv}\circ \bfd_vg \circ c_v\ii \in \Aut(L).$$

For a subgroup $F\le \Aut(L)$, the \emph{universal group} of $F$ is defined as the subgroup 
$$U(F) = \{g\in \Aut(X_L) \;|\; \forall v\in X_L^0,\;\bfD_vg \in F\}$$
of all automorphisms whose local actions belong to $F$.
\end{definition}

\begin{remark}
The group $U(F)$ is a closed subgroup of $\Aut(X_L)$.
It is easy to see that $U(1) = W_L$ and $U(\Aut(L))=\Aut(X_L)$.
\end{remark}

\begin{definition}\label{def local action of group}
Let $G\le \Aut(X_L)$ act transitively on the vertices of $X_L$ and let $F\le \Aut(L)$. 
The \emph{local action of $G$ is $F$},
if $G\le U(F)$ and the image $\bfD_v(G_v)$ of the stabilizer $G_v$ is equal to $F$.
\end{definition}

 \begin{notation} For a complex $Y$ and a group action $H\actson Y$ we denote by 
 \begin{itemize}
    \item[] $H_v$ or $\Stab_H(v)$  the stabilizer in $H$ of the vertex $v\in Y$;  
    \item[] $\Stab_H(A)$ the set-wise stabilizer of a subset $A\subseteq Y$;
    \item[]$H_A$ or $\Fix_H(A)$ the fixator (i.e., point-wise stabilizer) of the subset $A\subseteq Y$; 
    \item[]$H^n_v$ the fixator of $\calB_n(v)$; 
    \item[] and $H^n_A$ the fixator of $\calN_n(A)$.
 \end{itemize}
    When $H$ stabilizes a set $A$ (set-wise), we denote by $H|_A$ the subgroup of $\Aut(A)$ which is obtained by restricting the elements of $H$ to the set $A$.
\end{notation} 

\begin{remark}
    We note that the restriction to a ball of radius 1 is determined by the induced map on the link. In particular, $H_v|_{\calB_1(v)} \simeq \bfD_v (H_v)$.
\end{remark}

\subsection{The Odd graph.}

We begin by recalling the definition of the Odd graph $O_d$.
\begin{definition}[{Biggs-Guy \cite{biggs1972edge}}] 
For an integer $d\geq 1$, the Odd graph $O_d$ is the graph whose  vertex set is $\binom{[2d-1]}{d-1}$, and edge set is 
$\{ \{A,B\} |  A\cap B=\emptyset \}$. That is, there is an edge between two vertices if and only if their associated subsets do not intersect. 
\end{definition}

Perhaps the most famous Odd graph is the Petersen graph $O_3$, however the Odd graphs we will consider later will be for higher $d$. Namely, we will consider $d\geq 6$ because we will want to use the fact that $\Alt_{d-1}$ is simple. 

\begin{remark}\label{girth of Odd graph}
    For $d\geq 4$, the girth of $O_d$, i.e., the shortest cycle, is 6. 
\end{remark}

Note that $\Sym_{2d-1}\le \Aut(O_d)$. In fact, by the Erd\"os-Ko-Rado theorem 
\cite{erdos1961intersection} we have:
\begin{theorem}[{\cite[Corollary 7.8.2]{godsil2001algebraic}}]
    $\Aut(O_d)=\Sym_{2d-1}$ .
\end{theorem}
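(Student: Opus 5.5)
We must show $\Aut(O_d)=\Sym_{2d-1}$. The inclusion $\Sym_{2d-1}\le\Aut(O_d)$ is clear: a permutation of $[2d-1]$ maps $(d-1)$-subsets to $(d-1)$-subsets and preserves disjointness. The action is faithful once $d\ge 2$, since a nontrivial permutation moves some $(d-1)$-subset. For the reverse inclusion, the plan is to recover the ground set $[2d-1]$ inside the graph $O_d$ using structure that any automorphism must preserve.

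The key object is, for each $i\in[2d-1]$, the set
\[
\calS_i=\left\{A\in\binom{[2d-1]}{d-1} \;\middle|\; i\in A\right\}.
\]
This is an independent set in $O_d$, because two sets both containing $i$ intersect. We will use the Erd\H{o}s--Ko--Rado theorem with its uniqueness clause. Since $2(d-1)<2d-1$, $(d-1)$-sets in $[2d-1]$ are pairwise disjoint only in limited ways, but we need the complementary fact about pairwise intersecting families. Independent sets in $O_d$ are exactly intersecting families of $(d-1)$-subsets of $[2d-1]$. Since $2d-1\ge 2(d-1)+1$, i.e.\ $n>2k$ with $k=d-1$, EKR gives
\[
|\calI|\le\binom{2d-2}{d-2}
\]
for every such family $\calI$. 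Hilton--Milner (the uniqueness part of EKR) says that for $n>2k$ equality holds only for the stars $\calS_i$. So the maximum independent sets of $O_d$ are exactly the $2d-1$ stars.

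An automorphism $\varphi\in\Aut(O_d)$ maps maximum independent sets to maximum independent sets, so it permutes the stars. This defines $\sigma\in\Sym_{2d-1}$ by $\varphi(\calS_i)=\calS_{\sigma(i)}$. Each vertex $A$ is determined by the stars containing it: $A=\{i \mid A\in\calS_i\}$. Hence $\varphi(A)=\{\sigma(i)\mid i\in A\}=\sigma(A)$, so $\varphi$ coincides with the action of $\sigma$. This proves $\Aut(O_d)\le\Sym_{2d-1}$.

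The main obstacle is the uniqueness statement in EKR. This is classical for $n>2k$, but it requires $k\ge 2$, i.e.\ $d\ge 3$, and these degenerate cases must be handled separately. For $d=2$, $O_2$ is a triangle, whose automorphism group is $\Sym_3$. For $d=1$, $O_1$ is a single vertex, which agrees with $\Sym_1$. If uniqueness is to be avoided, an alternative is to show directly that an intersecting family of maximum size is a star, using the Katona cyclic-order argument with its equality analysis. We would simply cite it.
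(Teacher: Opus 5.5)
Your argument is correct and is the Erd\H{o}s--Ko--Rado route the paper relies on when it cites Godsil--Royle's Corollary 7.8.2: since $n=2d-1>2k=2d-2$, the maximum independent sets of $O_d$ are exactly the $2d-1$ stars, every automorphism permutes them, and each vertex is recovered as the set of stars containing it. Two minor remarks: the uniqueness clause also holds trivially for $k=1$, so $d=2$ needs no separate treatment; and uniqueness is usually credited to Erd\H{o}s--Ko--Rado themselves, while Hilton--Milner proved the stronger bound on non-star intersecting families, which implies it.
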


In view of the above theorem we observe the following:
\begin{observation}\label{fix in the Odd graph}
A vertex is fixed by an element in $\Aut(O_d)$ if and only if its associated set is stabilized by the corresponding element in $\Sym_{2d-1}$.
Therefore, if  $A\in\binom{[2d-1]}{d-1}$ is a vertex in $O_d$, and $e$ is an edge in $O_d$ with endpoints $B,C\in\binom{[2d-1]}{d-1}$, then:
    \begin{enumerate}
        \item \label{fix of vertex} 
        $\Stab_{\Aut(O_d)}(A)=\Sym(A)\times\Sym([2d-1]\setminus A).$
        \item \label{fix of edge} 
        $\Fix_{\Aut(O_d)}(e)=  \Sym(B)\times\Sym(C).$
        \item \label{fix of star} $\Fix_{\Aut(O_d)}(\calN_1(A))=  \Sym(A)$.
        \item \label{fix of 1-nbd of edge} $\Fix_{\Aut(O_d)}(\calN_1(e))=1$.
    \end{enumerate}
\end{observation}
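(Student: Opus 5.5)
The statement to prove is \Cref{fix in the Odd graph}. My plan is to identify $\Aut(O_d)$ with $\Sym_{2d-1}$ acting on $\binom{[2d-1]}{d-1}$ by $\sigma\cdot A=\sigma(A)$, using the theorem quoted just above. Since this action is faithful, an automorphism fixes the vertex $A$ exactly when the corresponding permutation $\sigma$ satisfies $\sigma(A)=A$. That gives the first sentence of the observation. Each of the four items then comes down to computing an intersection of set-stabilizers inside $\Sym_{2d-1}$.

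\textbf{Items (\ref{fix of vertex}) and (\ref{fix of edge}).} For (\ref{fix of vertex}), a permutation preserves $A$ if and only if it also preserves the complement $[2d-1]\setminus A$. Hence the stabilizer is $\Sym(A)\times\Sym([2d-1]\setminus A)$.

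For (\ref{fix of edge}), let $B,C$ be disjoint sets of size $d-1$. Then $[2d-1]=B\sqcup C\sqcup\{p\}$ for a single point $p$. A permutation fixing both $B$ and $C$ setwise must also fix $p$. So $\Fix(e)=\Sym(B)\times\Sym(C)$.

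\textbf{Item (\ref{fix of star}).} The neighbours of $A$ are exactly the $(d-1)$-subsets of the $d$-element set $A^c=[2d-1]\setminus A$, i.e. $A^c\setminus\{q\}$ for $q\in A^c$. So a permutation fixing $\calN_1(A)$ must preserve $A$. Its restriction to $A^c$ then preserves every set $A^c\setminus\{q\}$, and therefore fixes every point $q\in A^c$. Conversely, every element of $\Sym(A)$ fixes $A$ and each of these neighbours. Hence the fixator is $\Sym(A)$.

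\textbf{Item (\ref{fix of 1-nbd of edge}).} Here $\calN_1(e)$ contains $\calN_1(B)\cup\calN_1(C)$, so by (\ref{fix of star}) the fixator lies in $\Sym(B)\cap\Sym(C)$. Here $\Sym(B)$ means permutations supported on $B$, and $\Sym(C)$ means permutations supported on $C$. Since $B\cap C=\emptyset$, this intersection is trivial.

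The only point needing care is this last identification. In (\ref{fix of star}), $\Sym(A)$ must be read as the permutations supported on $A$, i.e. fixing $A^c$ pointwise. With that reading the intersection argument goes through directly, so I do not expect a real obstacle: the whole proof is a routine check once $\Aut(O_d)=\Sym_{2d-1}$ is assumed.
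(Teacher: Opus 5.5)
Your proof is correct and matches the paper, which states this observation as an immediate consequence of $\Aut(O_d)=\Sym_{2d-1}$ with no further argument. Your item-by-item check is exactly the routine verification the paper leaves implicit; in particular, reading $\Sym(A)$ as permutations supported on $A$ is what makes (4) follow from (3). One small quibble: the first sentence holds by the definition of the action, while faithfulness is only needed to identify each automorphism with a unique permutation.
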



\subsection{The stabilizer of a vertex in $U=U(\Alt_{2d-1})$.}

From now on let $d\geq 6$, denote $\ell=2d-1$, $m=d-1$, $L=O_d$, $X=X_L$ and $U=U(\Alt_\ell)$ where $\Alt_\ell$ is the alternating group in $\Sym_\ell = \Aut(L)$.

We start with the following claim, which we will use mostly to determine that the action of two non adjacent partly free vertices in the same square, determines 
the action of the  free vertex between them.

\begin{claim}\label{auxiliary claim for the action of partly free vertices implies the  free one}
    Let $G \le \Aut(X)$ and let $x,y,z$ be consecutive vertices in a square of $X$, then 
    $G^1_x\cap G^1_z\subseteq G^1_y$.
\end{claim}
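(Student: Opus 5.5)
Let $g\in G^1_x\cap G^1_z$, so $g$ fixes $\calB_1(x)$ and $\calB_1(z)$ pointwise. We must show that $g$ fixes every vertex of $\calB_1(y)$. Since $X$ is two-dimensional ($L=O_d$ is triangle free), $\calB_1(y)$ in the $\ell_\infty$ metric consists of $y$, its neighbours along edges, and the vertices diagonally opposite $y$ in squares containing $y$. By the remark that restrictions to $\calB_1$ are determined by the link, it suffices to show that $g$ fixes $y$ and that $\bfd_y g$ is the identity on $\Lk(y,X)$. Since $g$ is a cube-complex automorphism fixing $y$, fixing the link vertices then fixes the edges at $y$ and their endpoints. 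Fixing the link edges then fixes the squares at $y$, and with them the diagonal vertices.

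First, $y$ is fixed, being adjacent to $x$. Let $w$ be the fourth vertex of the square $Q=\{x,y,z,w\}$. The edges $[y,x]$ and $[y,z]$ are fixed, and they correspond to two adjacent vertices $\alpha,\beta$ of $\Lk(y,X)$. Via $c_y$, these are two adjacent vertices $A,B$ of $L=O_d$, i.e. labels of the edges $yx$ and $yz$.

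Next we show that $\bfD_y g$ fixes $\calN_1(A)\cup\calN_1(B)$ in $L$. Take any edge $[y,u]$ at $y$ whose label $C$ is adjacent to $A$ in $L$. Then there is a square at $y$ spanned by the edges of labels $A$ and $C$, namely $\{y,x,x',u\}$. This square contains $x$, so its vertices $u$ and $x'$ lie in $\calB_1(x)$ and are fixed by $g$. Hence the edge $[y,u]$ is fixed, and the link vertex labelled $C$ is fixed. The same argument with $z$ handles the neighbours of $B$. Therefore $\bfD_y g\in\Fix_{\Aut(O_d)}(\calN_1(e))$, where $e=\{A,B\}$ is an edge of $O_d$. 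By \Cref{fix in the Odd graph}(\ref{fix of 1-nbd of edge}), this fixator is trivial, so $\bfD_yg=1$.

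The step needing the most care is translating the fixed vertices into fixed link data at $y$. Here we need the right-angled structure: the edges at $y$ have distinct labels, and a link edge corresponds to a genuine square at $y$ containing the corresponding edges. The translation works because two distinct edges at $y$ have distinct endpoints (there are no bigons after collapsing). Once $\bfD_yg=1$, the map $\bfd_yg$ is the identity, so $g$ fixes every edge and square incident to $y$. Hence $g$ fixes $\calB_1(y)$, i.e. $g\in G^1_y$.
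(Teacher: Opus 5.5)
Your proof is correct and follows essentially the same route as the paper. You show that $g$ fixes $y$ and that $\bfD_y g$ fixes $\calN_1(A)\cup\calN_1(B)=\calN_1(\{A,B\})$ for the edge $\{A,B\}$ of $O_d$ given by the square; \Cref{fix in the Odd graph}(\ref{fix of 1-nbd of edge}) makes this fixator trivial, so $\bfD_y g=1$. The only difference is that you spell out why fixing $\calB_1(x)$ and $\calB_1(z)$ fixes these link vertices (via the diagonal vertex $u$ of the square at $y$ with labels $A$ and $C$), a step the paper asserts directly.
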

\begin{proof}
    Let $P\in X^2$ be the square which contains $x,y,z$. 
    Let $g\in G^1_x\cap G^1_z$, so $g$ fixes ${\calB_1} (x)\cup\calB_1(z)$, 
    and hence, $g$ fixes $P$.
    Moreover, if $\epsilon_x$ is the vertex in $\Lk(y,X)$ that corresponds to the edge  $e_x$ connecting $x$ to $y$, and $\epsilon_z$ is the vertex in $\Lk(y,X)$ that corresponds to the edge $e_z$ connecting $z$ to $y$, then $\bfD_yg\in \Fix_{\Aut(L)}(\calB_1(\epsilon_x)\cup \calB_1(\epsilon_z))$. 
    Now let $\sigma$ be the edge in $\Lk(y,X)$ that corresponds to the square $P$,   so $\calN_1(\sigma)= \calB_1(\epsilon_x)\cup\calB_1(\epsilon_z)$,  
    therefore $\bfD _y g\in \Fix_{\Aut(L)}(\calN_1(\sigma))$.
    By \Cref{fix in the Odd graph} the fixator of the one neighbourhood of an edge in $\Lk(y,X)$ is trivial, therefore $\bfD_y g=1$, hence $g\in \Fix(\calB_1(y))$.
\end{proof}

\begin{observation}\label{link to complex automorphism}
    For every vertex $v\in X_L^0$ and automorphism $\phi\in\Aut(L)$, there exists a (unique) automorphism $\Phi_{v,\phi}\in\Aut(X_L)$ such that $\Phi_{v,\phi}(v)=v$ and $\bfD_x \Phi_{v,\phi} = \phi$ for all $x\in X_L$. To see this, we note that $\phi$ gives rise to an automorphism of the group $W_L$, sending generators to generators. Thus, it gives rise to an automorphisms of the Cayley graph $A_L$, and the Davis complex $X_L$. This automorphism fixes the vertex corresponding to $1\in W_L$ and the local action is $\phi$ at each vertex. By conjugating this automorphism by an element of $W_L$ we can arrange for it to fix any other vertex $v\in X_L$.
\end{observation}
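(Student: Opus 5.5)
The statement is \Cref{link to complex automorphism}: for each $\phi\in\Aut(L)$ there is a unique $\Phi_{v,\phi}$ fixing $v$ with local action $\phi$ at every vertex. I will prove existence by the route the observation sketches, and then prove uniqueness separately, since the sketch does not address it.

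\textbf{Existence.} Since $\phi$ permutes the generating set $V(L)$ and preserves adjacency, it preserves the relations $a^2$ and $[a,b]$ for $a\sim b$. Hence it extends to a group automorphism $\hat\phi$ of $W_L$.

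Identify $X_L^0$ with $W_L$ and join $w$ to $wa$ by an edge labelled $a$. Then $\hat\phi$ sends the edge $\{w,wa\}$ to $\{\hat\phi(w),\hat\phi(w)\phi(a)\}$, so it is a label-permuting graph automorphism of the collapsed Cayley graph $A_L$. It maps the boundaries of cubes to boundaries of cubes, so it extends to an automorphism $\Psi$ of $X_L$ with $\Psi(1)=1$.

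Next I compute the local action. Under $c_w$, the link vertex of the edge $\{w,wa\}$ corresponds to the label $a$. Its image is the link vertex at $\hat\phi(w)$ of the edge labelled $\phi(a)$. Therefore $\bfD_w\Psi=\phi$ for every vertex $w$.

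To move the fixed point to an arbitrary vertex $v\in W_L$, set $\Phi=\lambda_v\circ\Psi\circ\lambda_v\ii$, where $\lambda_g$ is left multiplication. Left multiplication preserves labels, so $\bfD\lambda_g=1$ at every vertex. The chain rule $\bfD_x(gh)=\bfD_{hx}g\circ\bfD_x h$ then gives $\bfD_x\Phi=\phi$ for all $x$, and clearly $\Phi(v)=v$.

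\textbf{Uniqueness.} Suppose $\Phi$ and $\Phi'$ both fix $v$ and both have local action $\phi$ everywhere. Put $g=\Phi\ii\Phi'$. Then $g$ fixes $v$, and by the chain rule $g$ has trivial local action at every vertex.

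I claim $g$ fixes every vertex, by induction along the $1$-skeleton from $v$. Suppose $g$ fixes a vertex $x$. Because $\bfD_xg=1$, the map $\bfd_xg$ fixes each link vertex. So $g$ fixes every edge at $x$, and hence every neighbour of $x$.

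$X_L$ is connected, so $g$ fixes all vertices. An automorphism of a cube complex that fixes all vertices fixes every cube, so $g=\id$.

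\textbf{Main obstacle.} The only step needing care is the bookkeeping of the identifications $c_w$, specifically checking that $\Psi$ has local action $\phi$ rather than its inverse. Left versus right conventions could introduce $\phi\ii$. If that happens, I will replace $\phi$ with $\phi\ii$ in the construction of $\hat\phi$ and redo the local-action computation.
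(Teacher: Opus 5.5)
Your proof is correct and follows the paper's route: extend $\phi$ to a generator-permuting automorphism of $W_L$, let it act on the labelled Cayley graph and hence on $X_L$ (fixing $1$, with local action $\phi$ at every vertex), and conjugate by left multiplication by $v$. Your uniqueness argument supplies the part the paper only asserts parenthetically: $\Phi^{-1}\Phi'$ fixes $v$ and has trivial local action everywhere, so by connectedness it fixes every vertex and is therefore trivial. Your worry about getting $\phi^{-1}$ does not arise, since $\hat\phi(wa)=\hat\phi(w)\phi(a)$ sends the label $a$ directly to $\phi(a)$.
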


The next proposition is a special case of \cite[Theorem 4.2]{lazarovich2018regular}, and we add it with a proof in the language of this paper, for completeness.

\begin{proposition}\label{U_v^n is a product}
Let $v$ be a vertex of $X$.
Denote by $Bl_n(v)$ the set of all blocks in $\calS_{n}(v)$.
    \begin{enumerate}[label=(\arabic*)]
        \item For every $x\in b\in Bl_n(v)$ the map $\bfD_x$ gives an isomorphism
    $$ U_v^n|_{\calN_{1}(b)}\xrightarrow{\simeq}\Alt(Y_b).$$ 
    for some $Y_b\in V(L)$.
        \item The natural restriction map gives the following isomorphism $$U_v^n|_{\calB_{n+1}(v)} \xrightarrow{\simeq} \prod_{b\in Bl_n(v)} U_v^n|_{\calN_{1}(b)}.$$
        \item If $y\in \calS_n(v)$ is free, then $$U^n_v|_{\calN_1(y)} \xleftarrow{\simeq} U^n_v|_{\calN_1(x)} \times U^n_v|_{\calN_1(z)}$$ where $x,z$ are the two neighbours of $y$ in $\calS_n(v)$.
    \end{enumerate}
\end{proposition}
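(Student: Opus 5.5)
We will prove the three parts together. The two ingredients are a ``restriction'' argument using the local structure from \Cref{normal paths and pf} and \Cref{fix in the Odd graph}, and an ``extension'' argument that builds elements of $U_v^n$ from prescribed local data.

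\textbf{Part (1), injectivity and the target.} Fix a block $b\in Bl_n(v)$. By \Cref{no 3 consecutive} we have $|b|\le 2$.

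First suppose $b=\{x\}$. Let $e$ be the edge from $x$ to $\calS_{n-1}(v)$, with link vertex $\epsilon$. Since $\Lk(x,\calB_n(v))=\calN_1(\epsilon)$, every $g\in U^n_v$ has $\bfD_xg$ in $\Alt_\ell\cap\Fix(\calN_1(c_x(\epsilon)))$. By \Cref{fix in the Odd graph}(\ref{fix of star}) this group is $\Alt_\ell\cap\Sym(A)$, where $A=c_x(\epsilon)$. So $Y_b:=A$ and the image lies in $\Alt(A)$.

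Now suppose $b=\{x,y\}$. Then $x\sim y$ in a square with $x',y'$. An element fixing $\calB_n(v)$ fixes $\calN_1$ of the link vertex $\epsilon$ at $x$, and also the square, hence the second link vertex. The same label constraint holds at $y$ via parallel edges: the edge $xy$ is parallel to $x'y'$, and the edges $xx'$ and $yy'$ carry the same label. So the local action at $x$ again lies in $\Alt(A)$ with $A$ the label of $xx'$. The local action at $y$ is then determined by the local action at $x$, because $U$-elements propagate local actions across edges: the local action at $y$ agrees with that at $x$ on the star of the label of $xy$. Hence $\bfD_x$ is injective on $U_v^n|_{\calN_1(b)}$, since $\calN_1(b)=\calB_1(x)\cup\calB_1(y)$ is determined by the two link maps.

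\textbf{Part (1), surjectivity.} Given $\sigma\in\Alt(A)$, we must produce $g\in U^n_v$ with $\bfD_xg=\sigma$. We plan to build $g$ by defining it on successive balls $\calB_k(x')$ centered at the vertex $x'$ below $x$, a ``tree-like'' extension as in Burger--Mozes. The cleaner route is to use the product decomposition of $X$ along the hyperplane $\calH$ dual to $xx'$. The halfspace containing $x$ and not $v$ is a convex subcomplex. On it we apply $\Phi_{x,\sigma}$ from \Cref{link to complex automorphism}, and on the complementary halfspace we apply the identity. These glue along the carrier $N(\calH)$ because $\sigma$ fixes $\str(A,L)$ pointwise by \Cref{fix in the Odd graph}(\ref{fix of star}), so $\Phi_{x,\sigma}$ is the identity on the carrier. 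All local actions are $1$ or $\sigma$, so the glued map lies in $U$. It fixes $\calB_n(v)$ because, by \Cref{blockes are not diveded by sectors} and the normal path structure, the ball lies in the halfspace of $v$ together with the carrier.

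The main obstacle is this last containment: checking that $\calB_n(v)$ misses the far open halfspace of the hyperplane through a partly free edge. We would argue via normal paths that any vertex of $\calB_n(v)$ across $\calH$ would yield a point of $\calS_n(v)$ adjacent to $x$ outside $\Lk(x,\calB_n(v))=\calN_1(\epsilon)$.

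\textbf{Parts (2) and (3).} For (2), the elements constructed above are supported in the far halfspaces of distinct blocks. These halfspaces are pairwise disjoint outside $\calB_n(v)$, by distinct sectors or blocks, so their product realizes any tuple; this gives surjectivity. Injectivity holds because $\calB_{n+1}(v)$ is covered by $\calB_n(v)$ together with the neighbourhoods $\calN_1(b)$ and the neighbourhoods of free vertices. The free vertices are handled by \Cref{auxiliary claim for the action of partly free vertices implies the  free one}: a free $y$ lies in a square with its two partly free neighbours $x,z$ (\Cref{the neighbours of a free vertex are partly free}). Applying that claim inside $U^n_v$ shows that the restriction to $\calN_1(y)$ is determined by the restrictions at $x$ and $z$, which is the injectivity in (3). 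Surjectivity in (3) follows from (2), because $x$ and $z$ lie in different blocks: they are not adjacent, or else we would get a triangle in the link.
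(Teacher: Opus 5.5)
Parts (1) and (2) follow the paper's route: the label bound from \Cref{fix in the Odd graph}, gluing $\Phi_{x,\sigma}$ on the far halfspace to the identity on the near one, and the claim $G^1_x\cap G^1_z\subseteq G^1_y$ for free vertices. Part (3), however, has a genuine gap, because you have the map running the wrong way. In (3) the map goes from $U^n_v|_{\calN_1(x)}\times U^n_v|_{\calN_1(z)}$ to $U^n_v|_{\calN_1(y)}$. The claim $G^1_x\cap G^1_z\subseteq G^1_y$ shows only that this map is well defined. Part (2) shows only that every pair in the domain is realized by a single element of $U^n_v$, and surjectivity is then automatic. So what you call injectivity and surjectivity is really the construction of the map. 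What you have shown is that $U^n_v$ restricted to $\calN_1(x)\cup\calN_1(y)\cup\calN_1(z)$ is the product, not that $U^n_v|_{\calN_1(y)}$ is.

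The missing content is injectivity: an element $g\in U^n_v$ acting trivially on $\calN_1(y)$ must act trivially on $\calN_1(x)$ and $\calN_1(z)$. This is short. Let $w\in\calS_{n-1}(v)$ be the fourth vertex of the unique square $P\subseteq\calB_n(v)$ at $y$. Such a $g$ fixes every square at $y$, so $\bfD_xg$ fixes the star of the link vertex of the edge $xy$. Since $g\in U^n_v$, it also fixes the star of the link vertex of $xw$. These two link vertices are adjacent via $P$, so $\bfD_xg=1$ by \Cref{fix in the Odd graph}.\ref{fix of 1-nbd of edge}, and likewise $\bfD_zg=1$. The paper handles this direction by an order comparison with an edge stabilizer. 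Such a count needs a lower bound on $|U^n_v|_{\calN_1(y)}|$, which this direct argument supplies.

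Separately, your justification of surjectivity in (2) is false as stated: far halfspaces of distinct blocks are not disjoint outside $\calB_n(v)$. With $x,y,z,w,P$ as above, the hyperplane of the block containing $x$ is dual to $xw$ and $yz$. The hyperplane of the block containing $z$ is dual to $zw$ and $xy$. These cross in $P$, and both far halfspaces contain $y$ together with all its neighbours in $\calS_{n+1}(v)$.

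The conclusion survives for the reason the paper gives. For $i\neq j$ one has $\calN_1(b_i)\subseteq\calN_1(h^*_{b_j})$, where $\Phi'_j$ is the identity. This holds because $h_{b_j}\cap\calS_n(v)$ is the extended block of $b_j$; that fact is closely tied to the containment you flagged in (1), which the paper also asserts without proof. Overlaps of supports elsewhere are harmless because $U$ is a group.

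Finally, in (1), agreement of $\bfD_xg$ and $\bfD_yg$ on the star of the label $a$ of $xy$ does not by itself give injectivity of $\bfD_x$. You also need that $\bfD_yg$ fixes the star of $A$, and then \Cref{fix in the Odd graph}.\ref{fix of 1-nbd of edge} applied to the edge $\{a,A\}$ of $L$. This is exactly the paper's application of $G^1_x\cap G^1_z\subseteq G^1_y$ to the consecutive vertices $x,y,y'$.
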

\begin{proof}
    (1)
     Note that by \cref{no 3 consecutive,girth of Odd graph}, each block has size at most 2. Consider a block $b$, each vertex of $b$ has an edge connecting it to a vertex in $\calS_{n-1}(v)$. These edges transverse the same hyperplane $\calH_b$, the unique hyperplane separating $b$ from $\calB_{n-1}(v)$. The hyperplane $\calH_b$ is  labelled by some $Y_b\in V(L)$.
     Let $h_b$ be the set of vertices of the halfspace of $\calH_b$ that contains $b$ and let $h^*_b$ be the set of vertices of the complementary halfspace.

    Let $x\in b$. Every element $g\in U^n_v$ satisfies that $\bfD_x g$ fixes the 1-neighbourhood of the vertex labelled $Y_b$ in $\Lk(x,X_L)$, and so by \Cref{fix in the Odd graph}, $\bfD_x g\in \Alt(Y_b)$. 
    
    Now, if $b$ has another vertex $x\ne y\in b$, then $x,y$ belong to a square $\{x,y,y',x'\}$ where $x',y'\in \calS_{n-1}(v)$. If $g\in U^n_v$, then $\bfD_{y'}
    g=\id$ (since $\calN_1(y')\subseteq\calB_n(v)$). If $g$ is in the kernel of the map $\bfD_x$, i.e., $\bfD_x g= \id$ then by \Cref{auxiliary claim for the action of partly free vertices implies the  free one} $\bfD_y g= \id$, and so $g|_{\calN_1(b)}=\id$. This shows that the map $\bfD_x:U_v^n|_{\calN_{1}(b)}\xrightarrow{\simeq}\Alt(Y_b)$ is injective. (In fact, one has $\bfD_x g=\bfD_yg$).

    To prove surjectivity, recall that by \Cref{link to complex automorphism} for every $\phi\in\Alt_\ell$, there is a unique automorphism $\Phi\in U$ with $\Phi(x)=x$ and $\bfD_w\Phi = \phi$ for all $w$. 
    We note that if $x\in b$ and $\phi\in \Alt(Y_b)$ then $\Phi$ fixes the hyperplane $\calH_b$ pointwise, and also fixes $b$ pointwise. 
    We can now define a new automorphism 
    $$\Phi'(w) = \begin{cases} \Phi(w) & w\in \calN_1(h_b)\\
    \id & w\in \calN_1(h_b^*)\end{cases}$$
    The automorphism $\Phi'$ satisfies $\bfD_x\Phi' = \phi$. We note that $B_n(v)\subseteq \calN_1(h_b^*)$.
    This shows that $\Phi'\in U^n_v$ and its image under the map $\bfD_x$ is $\phi.$ Thus the map $\bfD_x : U^n_v|_{\calN_1(b)} \to \Alt(Y_b)$ is surjective.

    (2)
    To prove injectivity, note that if $g\in U^n_v$ satisfies $g|_{\calN_1(b)}=\id$ for every block $b\in Bl_n(v)$ then 
    \Cref{auxiliary claim for the action of partly free vertices implies the  free one} shows that $g|_{\calN_1(y)}=\id$ for every free vertex $y\in \calS_{n}(v)$. It follows that the map $$U_v^n|_{\calB_{n+1}(v)} \hookrightarrow \prod_{b\in Bl_n(v)} U_v^n|_{\calN_{1}(b)}$$ is injective.
    
    To prove surjectivity, 
    let $b_1,\ldots, b_{t}$ be the blocks in $Bl_n(v)$, and let $\phi_i\in \Alt(Y_{b_i})$ be any choice of permutations for each $1\le i\le t$. 
    Our goal is to find an automorphism $\Phi$ such that $\Phi\in U^n_v$ and $\bfD_x\Phi = \phi _{i}$ for all $x\in b_i$, $i=1,\dots,t$.
    As in the proof of (1), there exists an automorphism $\Phi'_i$ such that $\bfD_x\Phi'_i = \phi_i$ for $x\in b_i$, and $\Phi'_i=\id$ on $\calN_1(h^*_{b_i})$. 
    Since $\calB_n(v) \subseteq \calN_1(h^*_{b_i})$, we get $\Phi'_i \in U^n_v$. 
    For $j\ne i$, since $\calN_1(b_j)\subseteq \calN_1(h^*_{b_i})$, we get $\bfD_x\Phi'_i = \id$ for $x\in b_j$.
    Finally, the product $\Phi = \Phi_1\circ \dots \circ \Phi_t$ is the desired automorphism.
    

    

    (3) 
    If  $g\in U_v^n$ satisfies that $g|_{\calN_1(x)} =g|_{\calN_1(z)}=\id $ then by \Cref{auxiliary claim for the action of partly free vertices implies the  free one} $g\in U_v^n|_{\calN_1(y)}=\id$.
    This shows that the homomorphism $$U^n_v|_{\calN_1(x)} \times U^n_v|_{\calN_1(z)} \to U^n_v|_{\calN_1(y)} $$ is well-defined.
    It is also clearly surjective.
    To show that it is an isomorphism, we note that by (1) we have $U^n_v|_{\calN_1(x)} \simeq U^n_v|_{\calN_1(z)} \simeq \Alt_m$. Also, $U^n_v|_{\calN_1(y)}\simeq \bfD_y(U^n_v)$ which is a subgroup of the stabilizer of an edge in the link $L$ (namely, the edge that corresponds to the unique square of $\calB_n(v)$ incident to $y$). Such an edge stabilizer is isomorphic to $\Alt_m\times \Alt_m$ by \Cref{fix in the Odd graph}.
    By order considerations, it follows that the map is an isomorphism.
\end{proof}

 

    
We end this subsection with a technical lemma that will be used in the next section.

\begin{lemma}\label{auxiliary claim for components in universal} 
    Let $v$ be a vertex of $X$, and let $x,y\in\calS_n(v)$ be partly free vertices in different sectors. Then, there exists $\Phi\in U_v^{n-1} $ such that either $\Phi(x)\ne x$ and $\Phi(y)=y$, or $\Phi(x)= x$ and $\Phi(y)\ne y$.
\end{lemma}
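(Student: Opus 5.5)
Since $x$ and $y$ lie in different sectors, their normal paths $[v\sea x]$ and $[v\sea y]$ first diverge at some radius. The idea is to act nontrivially at that point of divergence, moving one branch while fixing the other, and then propagate the nontriviality out to radius $n$ on one side only. We induct on $n$, using \Cref{U_v^n is a product}.

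\textbf{Base case $n=1$.} Here $x\ne y$ are distinct vertices of $\calS_1(v)$, joined to $v$ by edges with distinct labels $X_0\ne Y_0\in V(L)$. We need $\phi\in\Alt_\ell=\bfD_v(U_v)$ that moves exactly one of the two labels. Write $X_0,Y_0$ as $(d-1)$-subsets of $[2d-1]$. Pick $i\in X_0\setminus Y_0$ and $j\notin X_0\cup Y_0$; such $j$ exists unless $X_0\cup Y_0=[2d-1]$, which is impossible since $|X_0\cup Y_0|\le 2d-2$. Pick a further $k\notin X_0$, $k\neq j$, which exists because $|[2d-1]\setminus X_0|=d\ge 6$. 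Then the $3$-cycle $\phi=(i\,j\,k)$ moves $X_0$ and fixes $Y_0$ setwise, provided both $j$ and $k$ lie outside $Y_0$. If no such $k$ is available, adjust the choice: use a double transposition swapping an element of $X_0\setminus Y_0$ with an element outside $X_0\cup Y_0$, together with two elements of $Y_0$. By \Cref{link to complex automorphism}, $\Phi_{v,\phi}\in U_v$ fixes $v$, fixes $y$, and moves $x$.

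\textbf{Inductive step.} Let $x',y'$ be the penultimate vertices on $[v\sea x]$ and $[v\sea y]$. They lie in $\calS_{n-1}(v)$ and are in different sectors. If both are partly free, induction gives $\Psi\in U^{n-2}_v$ moving exactly one of them, say $\Psi(x')\ne x'=\ldots$ with $\Psi(y')=y'$. The image $\Psi(x)$ then has $\Psi(x')\ne x'$ on its normal path. By uniqueness of normal paths and the sector/penultimate structure (\Cref{normal paths and pf}), this forces $\Psi(x)\ne x$. For $y$ we must correct $\Psi$ using elements of $U^{n-1}_v$, but that would destroy $\Psi\in U^{n-1}_v$. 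So this route has to be reorganised.

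\textbf{A better organisation.} The cleaner approach is to work directly at level $n$ with the product structure of \Cref{U_v^n is a product}(2), applied at radius $n-1$:
\[
U_v^{n-1}|_{\calB_n(v)}\cong\prod_{b\in Bl_{n-1}(v)}\Alt(Y_b).
\]
Let $b_x,b_y$ be the blocks containing the partly free neighbours of $x$ and $y$ in $\calS_{n-1}(v)$. These blocks lie in different sectors, by \Cref{blockes are not diveded by sectors} and the sector definitions, so $b_x\ne b_y$ and they can be acted on independently. Choose the identity on $b_y$ and a nontrivial element of $\Alt(Y_{b_x})\cong\Alt_{d-1}$ on $b_x$. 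Since $\Alt_{d-1}$ acts transitively on $\binom{[2d-1]\setminus Y_{b_x}}{d-1}$ minus the fixed structure, such an element moves the edge from $b_x$ to $x$.

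\textbf{Main obstacle.} The delicate step is showing that the identity on $\calN_1(b_y)$ indeed fixes $y$, and that the action on $b_x$ moves $x$. The second part is the real issue when $x$ is adjacent to a free vertex whose other neighbour lies in $b_y$: then $y$ or $x$ could be constrained through a free vertex, via \Cref{U_v^n is a product}(3). I would check this using the sector property: partly free vertices of $\calS_n(v)$ are determined by a single edge leaving a block of $\calS_{n-1}(v)$. If $x$'s neighbour is free, move one level further down and use \Cref{normal paths and pf}.
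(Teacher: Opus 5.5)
Your base case is correct; the double transposition is exactly what is needed when $X_0\cap Y_0=\emptyset$. The inductive step, however, has a genuine gap.

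\textbf{The free case is missing.} Your ``better organisation'' tacitly assumes that the unique neighbours $x',y'\in\calS_{n-1}(v)$ of $x,y$ are partly free, so that blocks $b_x,b_y$ exist. The case where $x'$ or $y'$ is free is only deferred (``move one level further down'') and never argued, yet it is the heart of the lemma. If $x'$ is free, then by \Cref{U_v^n is a product}(3) the local action at $x'$ of an element of $U^{n-1}_v$ is governed by the two blocks adjacent to $x'$. Since $y'$ may be adjacent to one of those blocks, ``identity on $b_y$, nontrivial on $b_x$'' does not even specify an element, let alone one fixing $y$. What is missing is a way to find a block adjacent to $x'$ but not to $y'$. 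The paper does this with hyperplanes:
\begin{itemize}
\item If $x'$ is free, the halfspaces $h_1,h_2$ containing $x'$ of the two hyperplanes separating $x'$ from $\calB_{n-2}(v)$ satisfy $h_1\cap h_2\cap\calS_{n-1}(v)=\{x'\}$. Hence one of these hyperplanes separates $x'$ from $y'$.
\item If $x',y'$ are both partly free, they lie in different blocks, and the far halfspace of the hyperplane of $x'$'s block meets $\calS_{n-1}(v)$ only in the extended block of $x'$.
\end{itemize}
One then takes $\Phi_{x',\phi}$ on the far halfspace $h$ and the identity on $h^*$, with $\phi\in\Alt(A)$ and $A$ the label of that hyperplane. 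In your language, this is the element of the product in \Cref{U_v^n is a product}(2) supported on a single block.

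\textbf{The reason the element moves $x$ is wrong.} $\Alt(Y_{b_x})$ fixes $[2d-1]\setminus Y_{b_x}$ pointwise, so it fixes every vertex of $L$ disjoint from $Y_{b_x}$. It is not transitive on anything there, and a nontrivial element need not move the edge $x'x$. What you need is that the label $B$ of $x'x$ meets the relevant block label $A$ in a nonempty proper subset; then you choose $\phi\in\Alt(A)$ with $\phi(B)\neq B$.
\begin{itemize}
\item For partly free $x'$ this holds: $B\cap A=\emptyset$ would make $x'x$ and the edge from $x'$ to $\calS_{n-2}(v)$ span a square, putting $x$ in $\calB_{n-1}(v)$.
\item For free $x'$ it can fail for one of its two blocks (not both, since $L$ is triangle-free). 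Take $n=2$, let $x'$ be the diagonal of a square at $v$ with labels $A_1,A_2$, and let $B\cap A_1=\emptyset$, $B\neq A_2$. Then $x'x$ lies in a square crossed by the $A_1$-hyperplane, and $\Alt(A_1)$ fixes $x$.
\end{itemize}
In that example, if $y'$ is a free vertex adjacent to the $A_2$-neighbour of $v$, the $A_1$-hyperplane is the only hyperplane at $x'$ separating $x'$ from $y'$. The resulting element then moves neither vertex, and you must move $y$ instead.

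So a complete argument must select a block that acts nontrivially on exactly one of the two edges $x'x$, $y'y$, and this is where the sector hypothesis has to be used. Note that the paper's own sentence ``since $x$ is partly free, $e$ is not in the carrier of $\calH$'' does not cover this configuration either, so handle it explicitly.
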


\begin{proof}
    For $n=1$, $x,y$ are partly free, therefore they are connected by an edge to $v$. Each edge corresponds to a vertex $Y_x,Y_y\in \Lk(v,X)$ respectively.  
    Since $U_v|_{\calS_1(v)}= \Alt_{\ell}$, take any $\phi\in \Alt_\ell$ such that $\phi(Y_x)=Y_x$ and $\phi(Y_y)\ne Y_y$. 
    Then extend $\phi$ to $\Phi_{v,\phi}$ according to \Cref{link to complex automorphism}.
    
    Let $n\geq 2$ 
    Let $x,y\in \calS_n(v)$ be two partly free vertices in different sectors. Each connected by an edge to $\calS_{n-1}(v)$, denote the end points of those edges by $x'\ne y'$ respectively. 

    We claim that there exists a hyperplane $\calH$ separating $x'$ and $y'$ and not intersecting $\calB_{n-2}(v)$:
    If $x'$ is free, then there are two hyperplanes $\calH_1,\calH_2$ separating $x'$ from $\calB_{n-2}(v)$. If we denote by $h_1,h_2$ the halfspaces bounded by $\calH_1,\calH_2$ respectively containing $x'$, then $h_1\cap h_2 \cap \calS_{n-1}(v)=\{x'\}$.
    Take $\calH$ to be one of $\calH_1,\calH_2$ that separates $x'$ from $y'$.
    Same argument works if $y'$ is free.
    Now, if $x',y'$ are partly free, then they belong to different blocks (otherwise by arguments similar to those of \Cref{blockes are not diveded by sectors}, $x,y$ belong to the same sector). 
    Let $\calH$ be the unique hyperplane separating $x'$ from $\calB_{n-2}(v)$ and let $x'\in h$ be the halfspace bounded by $\calH$, then $h\cap \calS_{n-1}(v)$ is the extended block containing $x'$. Therefore, $y'$ is not in $h$.
    
    Let $\calH$ be a hyperplane as in the previous paragraph, and let $h,h^*$ be the halfspaces bounded by $\calH$. Without loss of generality assume that
    $x'\in h$ and $\calB_{n-2}(v)\cup\{y'\}\subseteq  h^*$.
    Let $A\subseteq [\ell]$ be the label of the hyperplane $\calH$,
    and let $B\subseteq [\ell]$ be the label of the edge $e$ connecting $x,x'$.
    Since $x$ is partly free, the edge $e$ is not in the carrier of $\calH$.
    It follows that $A$ and $B$ are not connected by an edge in $L$, i.e., $A\cap B \ne \emptyset$.
    Therefore, there exists some alternating permutation $\phi\in \Alt(A)\le \Alt_\ell$ such that $\phi(B)\ne B$.
    
    Take $\Phi_{x',\phi}$ 
    to be the
    automorphism such that  $\Phi_{x',\phi}(x')=x'$ and $\bfD_{z}\Phi_{x',\phi}= \phi$ for all $z\in X^0$ given by \Cref{link to complex automorphism}. Now,  define $$\Phi(z) = \begin{cases}
       \Phi_{x',\phi}(z) & z\in h\\
    \id & z\in h^*,
    \end{cases}$$          
    then $\Phi(x)\ne x$ since $\phi(B)\ne B$, and $\Phi(y)=y$ since $y\in h^*$.
    \end{proof}

\subsection{The virtual simplicity of the universal group}


\begin{definition}
    The group $\Aut(X)^+\leq \Aut(X)$ is the subgroup generated by all the halfspace stabilizers. $U^+\leq U$ is the subgroup generated by all elements in $U$ that also stabilize a 
    halfspace, i.e.,
    $U^+=U\cap \Aut(X)^+$.
\end{definition}


\begin{claim}\label{U^+ is index two in U}
    $|U:U^+|=2$.
\end{claim}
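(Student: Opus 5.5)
We will prove $|U:U^+|=2$ by building a homomorphism $\chi:U\to \mathbb{Z}/2$, showing its kernel is exactly $U^+$, and showing it is onto.

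\textbf{The homomorphism.} Use the type map coming from $W_L$. The vertices of $X$ are the elements of $W_L$, and every generator is a reflection, so word length mod $2$ gives a $2$-colouring of $X^0$. Adjacent vertices receive different colours, so every automorphism of $X$ either preserves both colour classes or swaps them. This defines $\chi:\Aut(X)\to\mathbb{Z}/2$, and we restrict it to $U$.

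\textbf{$U^+\le\ker\chi$.} Let $g\in U$ stabilize a halfspace $h$ of a hyperplane $\calH$. Then $g$ stabilizes the carrier $N(\calH)$, which is isomorphic to $\calH\times[0,1]$. Since $g$ also preserves the side $h$, it maps each edge transverse to $\calH$ to another transverse edge, sending the endpoint in $h$ to the endpoint in $h$. Fix a vertex $w\in h$ on the carrier. The vertices $w$ and $gw$ both lie in $N(\calH)\cap h$, which is a copy of $\calH$ and hence connected. The naive version of the parity claim fails: a generator $s$ of $W_L$ stabilizes a halfspace of a hyperplane disjoint from its reflection wall, yet it changes parity. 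So the argument must use the local action. The local actions of $g$ lie in $\Alt_\ell$, and this must be combined with the halfspace condition. The claim to prove is that such a $g$ preserves the colouring.

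\textbf{A cleaner target: a parity-type homomorphism.} The candidate is the following. For $g\in U$ and a vertex $v$, write $g=w\circ\Phi_{v,\phi}$-type decompositions and extract the sign of a permutation. A natural choice is
\[
\chi'(g)=\chi(g)+\operatorname{sgn}\text{-type datum}.
\]
The difficulty is that every local action already lies in $\Alt_\ell$, so the sign datum is trivial. Consequently the index-$2$ quotient should really be $\chi$ itself, with $U^+$ equal to the colour-preserving subgroup. That conflicts with the example of $s$ above, unless $s\notin U$. In fact $s\in U(1)\subseteq U$, and $s$ stabilizes halfspaces of hyperplanes far away, so $s\in U^+$. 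Hence colour parity does not vanish on $U^+$ and is the wrong invariant.

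\textbf{Revised plan: $U^+$ has index at most $2$, with $\chi$ detecting the two cosets.} Show first that $U^+$ contains the type-preserving part of $U$, so that $|U:U^+|\le 2$. The tools are as follows.
\begin{itemize}
\item Compactness: $U_v$ is generated by the groups $U^n_v$ modulo finite levels.
\item Proposition~\ref{U_v^n is a product}: $U^n_v|_{\calB_{n+1}}$ is a product of groups $\Alt(Y_b)$.
\item The construction in its proof: each factor is realized by an element $\Phi'$ that is the identity on the halfspace $h_b^*$, so $\Phi'$ stabilizes a halfspace and lies in $U^+$.
\end{itemize}
Together with closedness of $U^+$, which follows because these generators can be multiplied convergently, this gives $\bigcap_n\, U_vU^n_v\cdots$ and hence $U^1_v\le U^+$. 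For $U_v$ itself, $\bfD_v(U_v)=\Alt_\ell$ is realized by $\Phi_{v,\phi}$. This element is a product of elements each supported on halfspaces, namely the pieces of $\Phi_{v,\phi}$ cut along hyperplanes through $v$ with $\phi\in\Alt(Y)$, and $\Alt_\ell$ is generated by the subgroups $\Alt(Y)$. Therefore $U_v\le U^+$. Reflections lie in $U^+$, so $U^+$ is vertex transitive, and hence $U=U^+$ up to the index question.

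\textbf{Main obstacle.} The main obstacle is exhibiting a nontrivial invariant that vanishes on $U^+$, which is needed to show the index is exactly $2$. The plan is to define, for $g\in U$, the sign of the permutation $g$ induces on the set of hyperplanes adjacent to a fixed large ball, suitably normalized. Then check two things: this sign is trivial on every halfspace stabilizer, and it is nontrivial on some element of $U$. This step is the part I expect to be hardest.
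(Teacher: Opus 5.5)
Your proposal stops short of the claim, and the step you flag as the main obstacle cannot be carried out under the reading of $U^+$ that you adopt.

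\textbf{The invariant.} Your observation about a generator $s$ of $W_L$ is correct in substance, with one correction: the hyperplane must \emph{cross} the wall of $s$, not be disjoint from it. For $t$ adjacent to $s$ in $L$, $s$ maps the square $\{1,s,t,st\}$ to itself. It therefore stabilizes $\calH_t$ and preserves the side of $\calH_t$ containing $1$ and $s$. A hyperplane disjoint from the wall of $s$ is instead sent to the other side of that wall.

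So if $U^+$ is generated by set-wise halfspace stabilizers, then $s\in U(1)\le U$ lies in $U^+$. Your revised plan ($U_v\le U^+$ together with vertex transitivity of $U^+$) then proves $U^+=U$. No invariant vanishing on $U^+$ and nontrivial on $U$ can exist, so the normalized sign on hyperplanes near a large ball is impossible, not merely hard.

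The missing idea is that the claim holds only when $U^+$ is generated by halfspace \emph{fixators}, i.e.\ pointwise stabilizers. This is the reading the paper's proof actually relies on: the elements it places in $U^+$ are, or factor into, elements that are the identity on a halfspace ($h_b^*$, resp.\ $h_a^*$). Under that reading your first instinct was right and coincides with the paper's argument. Each generator of $U^+$ fixes a vertex, hence preserves the $2$-colouring, so $U^+\le\ker\chi$. Since $s\in U$ swaps the colours, $\chi|_U$ is onto. Your example does show that the paper's opening sentence (``$U^+$ preserves the proper 2-colouring'') is false for set-wise stabilizers, so the definition of $\Aut(X)^+$ has to be read with fixators.

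\textbf{The bound $|U:U^+|\le 2$.} This part also has gaps. With fixators, $s\notin U^+$, so vertex transitivity of $U^+$ must be replaced by transitivity on each colour class. The paper gets this from transitivity of $U^+_v$ on $\calS_1(v)$, using $\Alt_\ell=\gen{\Alt(Y)\,|\,Y\in V(L)}$. Each $\phi\in\Alt(Y)$ is the local action at $v$ of an element of $U_v$ that is the identity on the halfspace not containing $v$, for the hyperplane dual to the edge at $v$ labelled $Y$. One then moves two steps at a time along a path.

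For $U^1_v\le U^+$, your infinite-product argument needs $U^+$ to be closed, and nothing gives you that. $U^+$ is an abstractly generated subgroup, so limits of finite products of halfspace fixators only land in $\cl{U^+}$. The paper avoids limits entirely:
\begin{enumerate}
\item It first reduces to $g\in U^1_v$ by the same generation of $\Alt_\ell$.
\item It picks $a,b\in\calS_1(v)$ whose labels are at distance $3$ in $L$ (girth $6$).
\item Using \Cref{U_v^n is a product} to prescribe local actions block by block, it builds $\Phi\in U_v$ with $\Phi=\id$ on $h_b^*$ and $\Phi=g$ on $h_a^*$.
\end{enumerate}
Then $g=(g\Phi\ii)\Phi$ is a product of two halfspace fixators, which gives $U_v=U_v^+$.
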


\begin{proof}
First note that $U^+$ preserves the proper 2-colouring $C:X^0\to \{\text{red,blue}\}$.
Therefore, we are left to show that the orbit of a vertex $v$ is the whole set of vertices coloured in the same colour as $v$, and that $U_v=U_v^+$.

\textbf{Showing that the orbit is the whole set.} 
To do that, we first show that for any $v\in X^0$, $U^+_v|_{\calB_1(v)}$ acts transitively on the 
neighbours of  $v$:  
Let $Y\in V(L)$. Any $\phi\in \Fix_{\Alt_\ell}(\calN_1(Y))\le\Aut(L) $ 
can be extended to $\Phi\in \Aut(X)$ by \Cref{link to complex automorphism}.
Fixing the 1-neighbourhood of a vertex in $L$ corresponds to fixing the star of the edge  $Y_e$ associated with $Y$ in $X$, in particular it fixes the hyperplane transverse to  $Y_e$, 
therefore $\Phi\in U^+$.

For each $a\in V(L)$, $a\in \binom{[\ell]}{m}$, by \Cref{fix in the Odd graph}, $\Fix_{\Alt_\ell}(\calN_1(a))=\Alt(a) \simeq \Alt_{m}$. Going over all elements in $V(L)$ we have  $\gen {\Fix_{\Alt_\ell}(\calN_1(a))|a \in V(L)}=\gen {\Alt(a)|a\in \binom{[\ell]}{m}}=\Alt_{\ell}$. 

Let $x,y\in X^0\cap\calN_1(v)$, let $e_x,e_y\in X^1$ be the edges connecting $x$ and $y$ to $v$, and let $a_x,a_y\in \binom{[\ell]}{m}$ by the sets in $L$ associated with $e_x,e_y$ respectively. Then there is a permutation $\phi\in\Alt_\ell$ such that $\phi(a_x)=a_y$. Take $\Phi\in U_v$ as described above, i.e., with local actions $\phi$,  then  $\Phi\in U_v^+$ and $\Phi(x)=y$. 

Next we use transitivity in the 1-neighbourhood, to show that $U^+$ is transitive on the colour class.  

    Let $v,x\in X^0$ be two vertices with $C(x)=C(v)$. Let $P$ be any path along edges between $v$ and $x$, for convenience assume its without backtracking. Denote the vertices of the path by $v=b_1,r_1,b_2,r_2,\ldots b_k,r_k,b_{k+1}=x$, where $b$ is for vertices coloured 'blue', and $r$ is for vertices coloured 'red'. For every $1\leq i\leq k$, there is an element in $\Phi\in U^+_{r_i}$ such that 
    $\Phi(b_i)=b_{i+1}$. Hence, the orbit of $v$ under $U^+$ is its all colour class.

    \textbf{Showing that $U_v=U_v^+$.} 
    Clearly $U_v^+\subseteq U_v$. For the other inclusion, let $g\in U_v$. 
    First step: showing we can assume that $g\in U_v^1$. If there is an element $h\in U_v^+$ such that $h|_{\calB_1(v)}=g|_{\calB_1(v)}$, then $gh\ii\in U_v^1$, and $gh\ii \in U_v^+$ if and only if $g\in U_v^+$. 
Let us show that there exists such $h$. Indeed, $g|_{\calS_1(v)}$ induces an automorphism on $\Lk(v,X)$, which is an element in $\Alt_\ell$. On the other hand, for an edge $e$ incident to $v$, any $\phi\in U_v$ fixing the hyperplane transverse to $e$ is in fact in $U_v^+$, and by \Cref{fix in the Odd graph}, the local action it induces on $\calS_1(v)$ is an element in $\Alt(Y_e)$, where $Y_e$ is the set corresponding to $e$ in $\Lk(v,X)$. In fact every element  $\phi \in \Alt(Y_e)$ can be extended to an element $\Phi'$ in $U_v^+$, similarly to the way $\Phi'$ was defined in \Cref{U_v^n is a product}.
This is true for any edge incident to $v$, therefore, $Y_e$ goes over all $m$ subsets of $[\ell]$, and $\gen{\{\Alt(Y_e)| Y_e\in\binom{[\ell]}{m} \}}=\Alt_\ell$, therefore $g|_{\calS_1(v)}$ can be presented by elements in $U_v^+|_{\calS_1(v)}$, and since any such permutation can be extended to an element in $U_v^+$, there is such $h\in U_v^+$ with $h|_{\calB_1(v)}=g|_{\calB_1(v)}$.

Assume $g\in U_v^1$. 
Since we consider $d\ge 6$ the girth of $O_d$ is 6. Choose two vertices $A,B\in V(L)$ such that $d_L(A,B)=3$ ($d_L$ refers to the distance in the usual sense for graphs). Take the two vertices $a,b\in\calS_1(v)$ such that the edge connecting $a$ to $v$ is labeled $A$, and similarly for $b$. 


Denote by 
$\calH_a,\calH_b$ the hyperplanes transverse to $A,B$ respectively. 
Let $h_{a}, h_{b}$ be the halfspaces of $\calH_a,\calH_b$ respectively, that contains $v$, and let $h_a^*, h_{b}^*$ be the complementary halfspaces.
Note that $(h_a^*\cup \calN_1(N(\calH_a)) )\setminus \calB_1(v) \subseteq h_b$ (in words, the halfspace $h_a^*$ and the 1-neighbourhood of the carrier of $\calH_a$, aside from $\calB_1(v)$, is contained in $h_b$).


Let us define an automorphism $\Phi\in U_v$ in the following way. For any vertex $x\in h_{b}^*$, let $\bfD_x\Phi=\id$. As for the vertices in $h_b$, 
note that is it enough to define $\Phi$ on the partly free vertices, due to \Cref{U_v^n is a product}. Let $x\in h_{b}$ be partly free. If it is incident to an edge transverse to $\calH_{b}$, define $\bfD_x\Phi=\id$. 
If it is a part of block of size 2, such that the second vertex is incident to an edge transverse to $\calH_{b}$,  then define here also $\bfD_x\Phi=\id$. 
For any other partly free vertex in $h_b$, define $\bfD_x\Phi=\bfD_xg$.

We now show that if $x$ is a partly free vertex in $h_a^*$, then $\bfD_x\Phi=\bfD_xg$. Indeed, 
in order for a partly free vertex in $h_a^*$ not to satisfy the condition $\bfD_x\Phi=\bfD_xg$, it needs to be in a block of size two, with the second vertex being in $h_b$, but that can not be the case from the choice of $a$ and $b$.

To show that 
$\Phi|_{h_a^*}=g|_{h_a^*}$, let $x\in h_a^* $ be a vertex. Consider $[v\nea x]=\{v,v_1,v_2,\ldots,v_n=x\}$. Since $g,\Phi\in U_v^1$, we have $g(v_1)=\Phi(v_1)=v_1$. For the next step, we note that $v_1\in h_a^*$,  therefore $\bfD_{v_1}g=\bfD_{v_1}\Phi$, and so $g(v_2)=\Phi(v_2)$. And so on for any other $2\leq i\leq n$, since $g(v_{i-1}) =\Phi(v_{i-1})$ and $\bfD_{v_{i-1}}g =\bfD_{v_{i-1}}\Phi$, we have that $g(v_{i}) =\Phi(v_{i})$, in particular $g(x) =\Phi(x)$ $\Rightarrow$ $g|_{h_a^*}= \Phi|_{h_a^*}$. 

By definition, we have that $\Phi|_{h_b^*}=\id$, therefore it fixes $\calH_b$, and also $v$. So $\Phi\in U_v^+$. 
In addition, $\Phi|_{h_a^*}=g|_{h_a^*}$, we have that $g\Phi\ii|_{h_a^*}=\id$, in particular, $g\Phi\ii$ fixes $\calH_a$, and also $v$. Therefore $g\Phi\ii\in U_v^+$. 
This implies that $g=g\Phi\ii\cdot \Phi\in U_v^+$.
\end{proof}



\begin{proposition}\label{simplicity of U}
    The group $U$ is a closed virtually simple subgroup of $\Aut(X)$.
\end{proposition}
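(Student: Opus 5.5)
We will prove that $U^+$ is simple. Closedness was already noted in the remark after \Cref{def: universal group}, and $|U:U^+|=2$ is \Cref{U^+ is index two in U}, so $U$ is virtually simple once $U^+$ is. The natural tool is Haglund--Paulin's analogue of Tits' simplicity theorem for CAT(0) cube complexes: if a group $H$ acts on a CAT(0) cube complex, with suitable non-elementarity and minimality, and has a version of Tits' independence property (P) with respect to hyperplanes, then the subgroup generated by halfspace fixators is simple. (This is the approach in \cite{lazarovich2018regular}, which we would quote in the form needed.) Concretely, the plan is to verify: (a) the action of $U$ on $X$ is minimal, non-elementary and essential, with no fixed point at infinity; (b) $U$ satisfies the hyperplane independence property; (c) $U^+$ is generated by the halfspace fixators $U_{h}$ (pointwise fixators of halfspaces $h$).

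\textbf{Step (a).} $U$ contains $W_L=U(1)$, which acts cocompactly, indeed vertex transitively, on $X$. Since $L=O_d$ is not a join and is not a single vertex, $W_L$ is non-elementary and the action is essential. This follows from standard facts on right-angled Coxeter groups: irreducibility, and no hyperplane with a bounded halfspace.

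\textbf{Step (b).} For a hyperplane $\calH$ with halfspaces $h,h^*$, we need that every $g\in \Stab_U(\calH)$ fixing $\calH$ pointwise decomposes as $g=g_1g_2$ with $g_1\in U_{h}$, $g_2\in U_{h^*}$. This is the same cut-and-paste already used repeatedly, e.g.\ the definition of $\Phi'$ in the proof of \Cref{U_v^n is a product} and of $\Phi$ in \Cref{auxiliary claim for components in universal}. Set $g_1=g$ on $h^*$ and $\id$ on $h$, and set $g_2=g$ on $h$ and $\id$ on $h^*$. These patch to automorphisms because $g$ fixes $N(\calH)$ pointwise. To stay inside $U$ we must check the local actions at vertices of the carrier. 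At $x\in N(\calH)$, $\bfD_x g$ fixes the vertex of $L$ labelling $\calH$ together with its neighbours. By \Cref{fix in the Odd graph} it then lies in $\Sym(A)$, where $A$ is the label, and also in $\Alt_\ell$, hence in $\Alt(A)$. The local actions of $g_1$ and $g_2$ at $x$ are $\id$ on one side and $\bfD_xg$ on the other. Both glue to the same element $\bfD_xg$ restricted to the relevant part of the link, and they lie in $\Alt(A)\le\Alt_\ell$, so $g_1,g_2\in U$.

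\textbf{Step (c).} $U^+$ is defined as generated by elements of $U$ stabilizing a halfspace. Using (b), the proof of \Cref{U^+ is index two in U} (where each generator constructed fixes a hyperplane, and then a halfspace after splitting) shows that $U^+$ equals the group generated by the $U_h$.

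\textbf{Conclusion and main obstacle.} Given (a)--(c), the Haglund--Paulin/Tits argument gives that any nontrivial normal subgroup $N$ of $U^+$ contains all $U_h$, so $N=U^+$. The main obstacle is making sure the hypotheses of the cited simplicity theorem match exactly. In particular, the property (P) required may concern arbitrary convex subcomplexes (for instance geodesic lines or intersections of halfspaces) rather than single hyperplanes. If so, we would iterate the hyperplane decomposition in (b) along a sequence of hyperplanes, exactly as Tits' property (P) for trees is obtained edge by edge. We would also need to check that the fixators $U_h$ are nontrivial. This holds by the surjectivity part of \Cref{U_v^n is a product}(1), since $\Alt_m\ne1$.
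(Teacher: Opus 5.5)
Your proposal follows the paper's argument. The paper likewise invokes Lazarovich's Haglund--Paulin-type criterion \cite[Theorem A.3]{lazarovich2018regular}, asserts that $X$ and $U$ satisfy its hypotheses as in his Corollary 5.6, and concludes from $U^+\neq 1$ and $|U:U^+|=2$; your explicit checks of property (P), of generation of $U^+$ by halfspace fixators, and of nontriviality fill in what the paper leaves implicit. Two small corrections: in (b) you must assume that $g$ fixes the carrier $N(\calH)$ pointwise, not just $\calH$, since a reflection in $W_L$ fixes $\calH$ pointwise but swaps $h$ and $h^*$; and the cited theorem also requires $\Lambda(U)=\partial X$ (immediate from cocompactness of $W_L$) and that $U$ acts faithfully on $\partial X$, so both belong in your list in (a).
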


\begin{proof}
    In \cite[Theorem A.3]{lazarovich2018regular}, Lazarovich proves that if the following holds: 
    \begin{itemize}
        \item $X$ is a proper finite dimensional irreducible CAT(0) cube complex, and
        \item $G\leq \Aut(X)$ is non-elementary, acting essentially on $X$ with property $(P)$, $\Lambda (G)=\partial X$ and $G$ acts faithfully on $\partial X$, 
    \end{itemize}
    then $G^+$ is simple or trivial. 
    
    In a similar way to the proof of \cite[Corollary 5.6]{lazarovich2018regular}, we have that $X$ and the universal group $U$ satisfy the required conditions. Since $U^+$ is not trivial, it is simple, together with \Cref{U^+ is index two in U} we are done.

    We refer the reader to \cite{lazarovich2018regular} for the relevant definitions and the proof of \cite[Corollary 5.6]{lazarovich2018regular}.  
\end{proof}


\section{Local to Global}\label{local to global section}

In \cite[Lemma 3.5.3]{burger2000groups} Burger-Mozes proved a ``local to global'' result for the universal group $U=U(\Alt_m)$, $m\ge 6$, of automorphisms of the $m$-regular tree. Their result states that for a group to be dense in $U$ it suffices to verify that it is vertex transitive, its local action is $\Alt_m$ and that it is non-discrete.  We dedicate this section to prove a version of this theorem for the universal group $U=U(\Alt_{2d-1})$ of automorphisms of $X_L$ where $L=O_d, d\ge 6$.

\subsection{Non-discreteness criterion}
In \cite[Proposition 3.3.2]{burger2000groups}, Burger and Mozes provide a local condition for non-discreteness of the group $G$ acting vertex-transitively on a $d$-regular tree $T_d$.
When the local action is $\Sym_d$ they prove that the restriction $G^1_v|_{\calB_2(v)}$ is either $1,\Sym_{d-1}$ or a direct product of $d$ copies of $\Sym_{d-1}$.
Moreover, the last case holds if and only if $G$ is non-discrete in $\Aut(T_d)$.
We prove a similar result:

\begin{proposition}\label{local action options}
    Let $6\le d \in \bbN$ and let $G\le \Aut(X_{O_d})$ be a vertex transitive subgroup with local action  $\Alt_{2d-1}$.
    Then, for all $v\in X_L^0$, $G_v^1|_{\calB_2(v)}$ is isomorphic to $1, \;\Alt_{d-1}$ or $(\Alt_{d-1})^{\binom{2d-1}{d-1}}$. 
    
    Moreover, if $G$ is non-discrete then $G^1_v|_{\calB_2(v)} = (\Alt_{d-1})^{\binom{2d-1}{d-1}}$.
\end{proposition}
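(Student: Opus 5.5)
Following Burger--Mozes, I will study the normal subgroup $G^1_v \trianglelefteq G_v$ through its restrictions to the links of the neighbours of $v$. For $n=1$ the sphere $\calS_1(v)$ consists only of partly free vertices, each forming its own block, so there are $\binom{2d-1}{d-1}=|V(L)|$ of them. By \Cref{U_v^n is a product}, applied with $n=1$ and $U$ replaced by $G\le U$, restriction embeds
\[
G^1_v|_{\calB_2(v)} \hookrightarrow \prod_{x\in \calS_1(v)} U^1_v|_{\calN_1(x)} \cong \prod_{x\in\calS_1(v)} \Alt(Y_x),
\]
where $Y_x\in V(L)$ is the label of the edge $[v,x]$ and each factor is $\Alt_{d-1}$. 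Set $K=G^1_v|_{\calB_2(v)}$ and let $\pi_x:K\to \Alt(Y_x)\cong\Alt_{d-1}$ be the coordinate projections. Then $K$ is normalized by $G_v$, and $G_v$ acts on $\calS_1(v)$ through $\Alt_{2d-1}$, which is transitive on $V(L)$. Hence the projections are conjugate: either every $\pi_x(K)$ is trivial, in which case $K=1$, or none is.

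Next I will show that each image $\pi_x(K)$ is normal in $\Alt(Y_x)$, so that by simplicity of $\Alt_{d-1}$ (here $d-1\ge5$) it is either trivial or all of $\Alt(Y_x)$. Normality comes from $G_v\cap G_x$: its local action at $x$ is the full stabilizer in $\Alt_{2d-1}$ of the vertex $Y_x$ of $L$, because $G_v$ acts on $\calB_1(v)$ as $\Alt_{2d-1}$, and this stabilizer is $(\Sym(Y_x)\times\Sym(Y_x^c))\cap\Alt_{2d-1}$. Conjugation by this group induces at least $\Alt(Y_x)$ on the factor $\Alt(Y_x)$, and in fact all of $\Sym(Y_x)$. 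So if $K\neq1$, each $\pi_x$ is onto $\Alt_{d-1}$.

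Now $K$ is a subdirect product of copies of the nonabelian simple group $\Alt_{d-1}$. By the standard Goursat-type lemma, $K$ is a product of diagonal subgroups indexed by a partition of $\calS_1(v)$ into blocks. The partition is $G_v$-invariant, so it is a block system for the action of $\Alt_{2d-1}$ on $\binom{[2d-1]}{d-1}$. I expect this step to be the main obstacle. The key point is that this action is primitive: the stabilizer $(\Sym(A)\times\Sym(A^c))\cap\Alt$ of a $(d-1)$-subset is maximal, since $|A|\neq|A^c|$ because $2d-1$ is odd. Primitivity leaves only two possible partitions, into singletons or into a single block. These give $K\cong(\Alt_{d-1})^{\binom{2d-1}{d-1}}$ or $K\cong\Alt_{d-1}$ (a diagonal copy) respectively, which proves the trichotomy.

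For the ``moreover'' part I will argue by contradiction. Suppose $K$ is $1$ or diagonal. The plan is to show by induction that $G^1_v$ fixing $\calB_1(v)$ forces it to fix $\calB_n(v)$ for all $n$, so $G^1_v=1$ and $G$ is discrete. In the diagonal case, take $g\in G^1_v$ that is trivial at one vertex $x\in\calS_1(v)$; then it is trivial at every $x$. So the kernel $G^2_v$ of $G^1_v\to K$ satisfies $G^1_v\cap G_{\calB_1(x)}\subseteq G^2_v$. Transitivity then transfers this: for an adjacent $x$, $G^1_x\cap G^1_v\subseteq G^2_v$ and $G^1_x\cap G^1_v \subseteq G^2_x$. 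Combined with \Cref{auxiliary claim for the action of partly free vertices implies the  free one}, which handles free vertices, I will propagate along normal paths. I want to show that an element fixing $\calB_n(v)$ together with the $1$-ball of one vertex at distance $n$ fixes $\calB_{n+1}(v)$, eventually concluding $G^1_v=G^2_v=\cdots$, hence trivial. The case $K=1$ is simpler, since then $G^1_v=G^2_v$ directly; in the diagonal case the extra care is to choose, at each sphere, a partly free vertex whose $1$-ball is already fixed.
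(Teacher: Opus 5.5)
Your proof of the trichotomy is correct and is essentially the paper's argument. Each $G^1_v|_{\calB_1(x)}$ is normal in a copy of $\Alt(Y_x)\simeq\Alt_{d-1}$: you normalise by $G_v\cap G_x$, the paper by the fixator of the squares at the edge. Then $G_v$ is transitive on edge-neighbours, and $\Alt_{2d-1}$ is primitive on $\binom{[2d-1]}{d-1}$. Where the paper runs a component analysis with the sets $B_C$, you quote the structure theorem for subdirect products of nonabelian simple groups. This is the same content, and it gives for free that the parts cover all edge-neighbours.

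One inaccuracy: $\calS_1(v)$ is not made only of partly free vertices. It also contains the diagonal corners of the squares at $v$, which are free. So the product must be indexed by the edge-neighbours of $v$, i.e.\ the blocks of $\calS_1(v)$. That is what your count $\binom{2d-1}{d-1}$ and \Cref{U_v^n is a product} actually give.

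The ``moreover'' part, however, is wrong as written in the diagonal case. There $K\neq 1$, so $G^1_v\neq G^2_v$; indeed $G^1_v$ surjects onto $K\simeq\Alt_{d-1}$. Hence the chain $G^1_v=G^2_v=\cdots$ and the conclusion $G^1_v=1$ cannot hold. The proposed induction also cannot be run as phrased, since no vertex of $\calS_n(v)$ has its $1$-ball inside $\calB_n(v)$. What your two inclusions give, together with the trivial inclusion $G^2_v\subseteq G^1_v\cap G^1_x$, is
\[
G^2_v \;=\; G^1_v\cap G^1_x \;=\; G^2_x \qquad\text{for every edge } [v,x].
\]
By connectedness of the $1$-skeleton, $G^2_v=G^2_u$ for all vertices $u$. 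So $G^2_v$ fixes $X$ pointwise and is trivial. Since $G^2_v$ is open in $G$, $G$ is discrete (and $G^1_v\simeq K$ is finite).

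In the case $K=1$ the corresponding step is $G^1_v=G^2_v\subseteq G^1_u=G^2_u$ for all $u\in\calB_1(v)$. Thus $G^1_v$ fixes $\calB_3(v)$, and by iterating, all of $X$. The paper itself only asserts discreteness in these two cases, so this kernel comparison is precisely the justification needed there.
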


\begin{remark}
\begin{itemize}
\item Since the group $G$ is vertex transitive, for all $v,w\in X^0_L$ we have $G_v^1|_{\calB_2(v)}\simeq G_w^1|_{\calB_2(w)}$.

\item We will see in \Cref{characterization of dense subgroups} that $G$ is non-discrete if and only if $G^1_v|_{\calB_2(v)} = (\Alt_{d-1})^{\binom{2d-1}{d-1}}$.
\end{itemize}
\end{remark}


For the proof we will need the notion of a ``component of a finite group'' \cite[Definition (7.1)]{borel1973homomorphismes}. We recall the definition here:
\begin{definition}
    A group $C$ is \emph{quasi-simple} if it is perfect (that is, $C=[C,C]$) and its inner automorphism group is simple (that is, $C/Z(C)$ is simple).
    A subgroup $C\le H$ is a \textit{component} of $H$ if it is quasi-simple, finite and subnormal in $H$.
\end{definition}

\begin{fact}\label{properties of components}
We gather some properties of components (for more details and proofs cf. \cite[Chapter 11, section 31]{aschbacher2000finite}): 
    \begin{enumerate}
        \item If $C\leq \prod_{i=1}^t S$ is a component, where $S$ is a simple group, then $C$ is one of the factors.  
        \item \label{component in quotients} If $C\leq H$ is a component, and $Q=H/N$ is a quotient of $H$, let $q:H\to Q$ the quotient map, then $q(C)$ is either a component in $H$ or trivial.
        (In particular, if $S$ is simple then the image of a component $C\leq \prod_{i=1}^t S$ under the projection to one of the factors is either $S$ or 1).
        \item \label{components commute} If $C,C'\leq H$ are distinct components, then $[C,C']=1$.
    \end{enumerate}
\end{fact}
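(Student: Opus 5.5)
We will deduce all three items from one standard lemma about how components meet subnormal subgroups, in the spirit of \cite[(31.4)]{aschbacher2000finite}. Throughout, $S$ in items (1) and (2) is understood to be a non-abelian simple group; this is the case of interest, namely $S=\Alt_{d-1}$ with $d\ge 6$. We also read item (2) with ``component in $Q$'' in place of ``component in $H$''.

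\textbf{Preliminary facts.} First we record two facts about a quasi-simple group $C$.
\begin{itemize}
\item[(a)] Every proper normal subgroup $M\normalin C$ is central. Indeed, $MZ(C)/Z(C)$ is normal in the simple group $C/Z(C)$. If it were everything, then $C=MZ(C)$, so $C=[C,C]=[M,M]\le M$, a contradiction. Hence $M\le Z(C)$.
\item[(b)] Every proper subnormal subgroup of $C$ is central. This follows from (a) by induction on the length of the subnormal chain, using that a central subgroup is normal.
\end{itemize}

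\textbf{Key lemma.} If $C$ is a component of $G$ and $N$ is subnormal in $G$, then either $C\le N$ (so $C$ is a component of $N$) or $[C,N]=1$.

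We first treat the case $N\normalin G$, by induction on the subnormal length of $C$. Write $C\normalin\cdots\normalin M\normalin G$, so that $C$ is a component of $M$ with a shorter chain. Apply induction inside $M$ to the normal subgroup $N\cap M\normalin M$.
\begin{itemize}
\item If $C\le N\cap M$, we are done.
\item Otherwise $[C,N\cap M]=1$. Since $C\le M\normalin G$, we have $[N,C]\le N\cap M$, and therefore $[N,C,C]=1$ and $[C,N,C]=1$. The three subgroups lemma gives $[C,C,N]=1$. Since $C=[C,C]$ is perfect, this says $[C,N]=1$.
\end{itemize}
For general subnormal $N=N_0\normalin N_1\normalin\cdots\normalin G$, we iterate the normal case down the chain.
\begin{itemize}
\item If at some stage $[C,N_i]=1$, then $[C,N]=1$.
\item Otherwise $C\le N_i$ at every stage, hence $C\le N$. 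Being subnormal in $G$ and contained in $N$, $C$ is subnormal in $N$.
\end{itemize}
The three-subgroups step is the only non-formal point, and it is where perfectness of $C$ is essential.

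\textbf{Item (1).} Apply the key lemma to each factor $S_i\normalin\prod S$. If $[C,S_i]=1$ for all $i$, then $C$ is central in $\prod S$. But $Z(\prod S)=1$ since $S$ is non-abelian simple, while $C\ne 1$; this is impossible. So $C\le S_i$ for some $i$, and $C$ is subnormal in the simple group $S_i$. Therefore $C=S_i$.

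\textbf{Item (3).} Take distinct components $C,C'$ and apply the key lemma with $N=C'$, which is subnormal.
\begin{itemize}
\item If $[C,C']=1$, we are done.
\item Otherwise $C\le C'$ and $C$ is subnormal in $C'$. Since $C\ne C'$, fact (b) makes $C$ central in $C'$, hence abelian. As $C$ is also perfect, $C=1$, contradicting quasi-simplicity.
\end{itemize}

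\textbf{Item (2).} Let $q:H\to Q$ be the quotient map and suppose $q(C)\ne 1$.
\begin{itemize}
\item $q(C)$ is finite, perfect (as an image of a perfect group), and subnormal in $Q$ (images of subnormal chains are subnormal chains).
\item For simplicity of the inner automorphism group, let $M$ be the preimage in $C$ of $Z(q(C))$. Then $M\normalin C$ is proper, because $q(C)$ is perfect and non-trivial, hence non-abelian. By fact (a), $M\le Z(C)$; conversely $q(Z(C))\le Z(q(C))$, so $M=Z(C)$.
\item Hence $q(C)/Z(q(C))\cong C/Z(C)$ is simple, and $q(C)$ is a component of $Q$.
\end{itemize}
The parenthetical claim then follows: a component of the simple group $S$ must be $S$ itself, since it is a non-trivial subnormal subgroup.
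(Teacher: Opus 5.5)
Your proof is correct. The paper gives no argument of its own and simply defers to Aschbacher, Chapter 11, \S 31, and you have essentially reconstructed the standard treatment there. Its core is the lemma that a component either lies in, or centralizes, each subnormal subgroup, proved by induction on subnormal length using the three subgroups lemma and perfectness; the three items then follow as you show.

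The one step left implicit is the base case $C=G$ of that induction, which is exactly your fact (a). Your two readings of the statement are the right ones: ``component in $Q$'' in item (2), and $S$ non-abelian in item (1). Assuming $S$ non-abelian loses nothing, since quasi-simple groups are never abelian and the abelian case is vacuous.
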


Throughout this section we will make repeated use of the following easy fact:
\begin{fact}\label{fact: fix normal in stab}
    If $H\actson Z$, $A\subseteq Z$, and $H\leq \Stab(A)$ (set-wise stabilizer) then the point-wise stabilizer $\Fix_H(A)$ is normal in $H$.
\end{fact}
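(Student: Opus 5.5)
The plan is to realize $\Fix_H(A)$ as the kernel of a group homomorphism, since kernels are automatically normal. The hypothesis $H\le \Stab(A)$ says that every $h\in H$ satisfies $h(A)=A$. Hence each $h$ restricts to a bijection $h|_A\colon A\to A$, because its inverse $h^{-1}$ also lies in $H$ and so also maps $A$ onto $A$. This gives a map
$$\rho\colon H\to \Sym(A),\qquad h\mapsto h|_A .$$
In the paper's notation its image is $H|_A$.

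The key steps, in order, are as follows. First, $\rho$ is a homomorphism: $(gh)|_A=g|_A\circ h|_A$, because $h$ maps $A$ into $A$, so composing and then restricting agrees with restricting and then composing. Second, $\ker\rho=\{h\in H \mid h(a)=a \text{ for all } a\in A\}$, and this is exactly $\Fix_H(A)$ by definition. Third, a kernel is normal, so $\Fix_H(A)\normalin H$.

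As a sanity check one can also argue directly. For $g\in H$, $f\in\Fix_H(A)$ and $a\in A$, we have $g^{-1}a\in A$, because $g^{-1}$ stabilizes $A$. Therefore $gfg^{-1}a=g(g^{-1}a)=a$, so $gfg^{-1}\in\Fix_H(A)$.

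There is no real obstacle. The only point needing care is to use set-wise stabilization by both $h$ and $h^{-1}$, so that $g^{-1}a$ lands back in $A$. Without that, conjugates of elements of the fixator need not fix $A$.
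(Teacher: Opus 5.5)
Your proof is correct. $\Fix_H(A)$ is the kernel of the restriction homomorphism $H\to\Sym(A)$, $h\mapsto h|_A$, and you rightly use closure of $H$ under inverses to ensure each $h|_A$ is a bijection of $A$. The paper states this as an easy fact with no proof of its own (the proof environment that follows it in the source belongs to Proposition~\ref{local action options}), and your kernel-of-restriction argument is the standard one behind its notation $H|_A$.
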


\begin{proof}
    Let $w$ be a neighbour of $v$. 
    Let $e$ be the edge connecting them, and let $H$ be the subgroup of $G$ of all automorphisms fixing the set of squares incident to $e$. 
    By \Cref{fix in the Odd graph}.\ref{fix of star} we have 
    $H|_{\calB_1(w)} \simeq \Alt_{2d-1}$.

    By \Cref{fact: fix normal in stab} $$G_v^1|_{\calB_1(w)} \normalin H|_{\calB_1(w)}\simeq \Alt_{d-1}$$
    Since $d\ge 6$ it follows that $G_v^1|_{\calB_1(w)}\simeq 1\text{ or }\Alt_{d-1}$.

    Note that each neighbour of $w$ is partly free, and is a block of size one. 
    By \Cref{U_v^n is a product}, 
    \begin{equation}\label{G^1_v in the product of Alts}
        G_v^1|_{\calB_2(v)} \le U^1_v|_{\calB_2(v)}\simeq \prod _{w\in Bl_1(v)} U^1_v|_{\calB_1(2)} \simeq \prod_{w\in Bl_1(v)}\Alt_{d-1}
    \end{equation} where the product is over the neighbours $w$ of $v$.
    
    If $G^1_v|_{\calB_2(v)} = 1$ then we are done, and moreover, $G$ is clearly discrete.
    
    Otherwise, $G^1_v|_{\calB_1(w_0)}\simeq \Alt_{d-1}$ for some neighbour $w_0$ of $v$.
    Since $G^1_v\normalin G_v$, if $g\in G_v$ sends the neighbour $w_0$ to $w$ then $G^1_v|_{\calB_1(w)} \simeq \Alt_{d-1}$ as well.
    
    The action of $\Alt_{2d-1}$ on the vertices of $L$ is primitive. Since, by assumption, $G_v|_{\calB_1(v)} \simeq \Alt_{2d-1}$, we get that $G_v$ acts primitively (and therefore transitively) on the neighbours of $v$.
    It follows that for every neighbour $w$ of $v$ we have $G^1_v|_{\calB_1(w_0)}\simeq \Alt_{d-1}$.

    Consider a component $C$ of $G^1_v|_{\calB_2(v)}$.
    By \Cref{properties of components}.\ref{component in quotients}, $C|_{\calB_1(w)}\le G^1_v|_{\calB_1(w)} \simeq \Alt_{d-1}$ is either trivial or $\Alt_{d-1}$.
    Let $B_C$ be the collection of neighbours of $v$ for which $C|_{\calB_1(w)} \simeq \Alt_{d-1}$.
    By \Cref{properties of components}.\ref{components commute} if $C,C'$ are distinct components, then $B_C \cap B_{C'}$
    The action of $\Alt_{2d-1}$ on the vertices of $L$ (i.e., subsets of size $d-1$ in $[2d-1]$) is primitive. 
    Therefore, the sets $B_C$, for the components $C$ of $G^1_v|_{\calB_2(v)}$ form a partition of the neighbours of $v$.
    Conjugation by $g\in G_v$ sends a component $C$ to the component $gCg\ii$,  and we have $gB_C = B_{gCg\ii}$. Thus, the partition $B_C$ is invariant under the action of $G_v$ on the neighbours of $v$.
    Since this action is primitive this partition must be trivial. That is, either  
    (1) there is only one component $C$ and $B_C$ is the set of all neighbours of $v$, or (2) Each $B_C$ is a singleton. 

    In the first case, $C \simeq \Alt_{d-1}$, and we may think of $C$ as a diagonal copy of $\Alt_{d-1}$ in the product $\prod_{w\in Bl_1(v)} \Alt_{d-1}$. Every element $g$ of the product which is not in $C$ (i.e., in the diagonal) satisfies $gCg\ii \ne C$. Since $C$ is the only component in $G^1_v|_{\calB_2(v)}$, it follows that $G^1_v|_{\calB_2(v)} = C\simeq \Alt_{d-1}$.
    The same argument shows moreover that $G$ is necessarily discrete in this case. 
    
    In the second case, we get that each component $C$ is exactly one of the factors of the form $\Alt_{d-1}$ in the product in  \eqref{G^1_v in the product of Alts}.
    It follows that $G^1_v|_{\calB_2(v)} = U^1_v|_{\calB_2(v)} $, and so $G^1_v \simeq (\Alt_{d-1})^{\binom{2d-1}{d-1}}$.
\end{proof}

\subsection{Criterion for dense subgroups of universal groups}

In this section we prove the following.
\localToGlobal*


The rest of the section is devoted to the proof of this theorem.

\begin{proof}
First, we note that $(1) \implies (2)$ is trivial, and $(2) \implies (3)$ is proved in \Cref{local action options}. 
Thus, we focus on $(3)\implies (1)$:

We recall our standing notation $L=O_d,\;X=X_L,\;m=d-1,\; \ell=2d-1$.
Let $G \le U$ be a vertex transitive subgroup with 
$G_v|_{\calB_2(v)} = U_v|_{\calB_2(v)}.$

Since $G$ and $U$ are vertex transitive, it suffices to show that $\cl{G_v}=U_v$ for some fixed $v\in X^0$.
In order to show $\cl{G_v}=U$, we need to show that for all $n\in \bbN$,
$$G_v |_{\calB_n(v)} = U_v |_{\calB_n(v)}.$$
We do so by induction on $n$.

The base cases, $n=1$ and $n=2$, follow from the assumption.

For the induction step, assume $n\ge 2$ and $G_v |_{\calB_n(v)} = U_v |_{\calB_n(v)}.$
Our goal is to show that $$G_v |_{\calB_{n+1}(v)} = U_v |_{\calB_{n+1}(v)}.$$

To do so, we will show that $$G_v^n|_{\calB_{n+1}(v)}=U_v^n|_{\calB_{n+1}(v)}.$$ 
Then, the inclusion $G\le U$ gives maps between the following exact sequences
\[\begin{tikzcd}
1\arrow{r} &G_v^n|_{\calB_{n+1}(v)}\arrow{r}\arrow{d} &G_v|_{\calB_{n+1}(v)}\arrow{r} \arrow{d} &G_v|_{\calB_{n}(v)} \arrow{r}\arrow{d} &1\\
1\arrow{r} &U_v^n|_{\calB_{n+1}(v)}\arrow{r} &U_v|_{\calB_{n+1}(v)}\arrow{r} &U_v|_{\calB_{n}(v)} \arrow{r} &1
\end{tikzcd}
\]
 By the Five Lemma and the induction hypothesis, we will get the desired equality $G_v |_{B_{n+1}(v)} = U_v |_{B_{n+1}(v)}.$


 We dedicate the rest of this section to prove that 
 \begin{equation}\label{eq: the inductive step equation}
     G_v^n|_{\calB_{n+1}(v)}=U_v^n|_{\calB_{n+1}(v)}.
 \end{equation}

 
The proof of \eqref{eq: the inductive step equation} will follow similar ideas to that of \Cref{local action options}. 
Namely, we use simplicity of $\Alt_m$ ($m\geq 5$) to deduce that if $G^n_v|_{\calB_1(w)}$ is non-trivial for some partly free $w\in \calS_n(v)$ then it must be $\simeq \Alt_m$. 
After that, we will upgrade the above to hold for all  
partly free vertices in $\calS_n(v)$. 
Finally, we will analyse the components of $G_v^n|_{\calB_{n+1}(v)}$ to show that it is isomorphic to a direct product of $\Alt_m$ similar to the description of $U_v^n|_{\calB_{n+1}(v)}$ given in  \Cref{U_v^n is a product}.



\begin{claim}\label{non trivial implies alt}
    Let $p=\{v_0,\dots,v_{n-1}\}$ be a sequence of vertices such that $d(v_i,v_{i+1})\le 1$, and $x$ is a neighbour of $v_{n-1}$, then 
    $$G^1_p|_{\calB_1(x)}=1\text{ or }\simeq\Alt_m,$$
    In particular, if $x\in \calS_n(v)$ is partly free, then
    $$G^n_v|_{\calB_1(x)}=1\text{ or }\simeq\Alt_m.$$
\end{claim}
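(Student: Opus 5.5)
The plan is to realise $G^1_p|_{\calB_1(x)}$ as a normal subgroup of a restricted group that is isomorphic to $\Alt_m$, and then use the simplicity of $\Alt_m$ ($m=d-1\ge 5$). This mirrors the first step of the proof of \Cref{local action options}. Here $G^1_p$ denotes the fixator of $\calN_1(p)=\bigcup_i \calB_1(v_i)$.

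\textbf{Step 1: the ambient group.} Let $e$ be the edge from $v_{n-1}$ to $x$, and let $Y\in V(L)$ be its label at $x$. Let $H=\Fix_G(p)\cap\Fix_G(\calB_1(v_{n-1}))$, or simply $H=G^1_p$ enlarged to the fixator of $\calN_1(p)\setminus \calB_1(x)$. What matters is that $H$ fixes $x$ and fixes every neighbour of $v_{n-1}$, so it fixes the star of $e$. Hence $H$ stabilises $\calB_1(x)$ setwise. Its local action at $x$ fixes the vertex $c_x(e)=Y$ and all of its neighbours in $L$, i.e. it fixes $\calN_1(Y)$. By \Cref{fix in the Odd graph}.\ref{fix of star}, together with $G\le U$ (so local actions lie in $\Alt_\ell$), we get $\bfD_x(H)\le \Alt_\ell\cap \Sym(Y)=\Alt(Y)\simeq \Alt_m$. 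Since restriction to $\calB_1(x)$ is determined by $\bfD_x$, this gives $H|_{\calB_1(x)}\hookrightarrow \Alt_m$.

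\textbf{Step 2: normality.} The set $\calN_1(p)$ is a subset stabilised (indeed fixed pointwise) by $G^1_p$. Now take $H$ to be the fixator of $\calN_1(p)\setminus(\calB_1(x)\setminus\calN_1(p))$; more simply, take $H$ to be any subgroup containing $G^1_p$ that setwise stabilises $\calN_1(p)\cap\calB_1(x)$. By \Cref{fact: fix normal in stab}, $G^1_p|_{\calB_1(x)}$ is then normal in $H|_{\calB_1(x)}$.

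\textbf{Step 3: conclude.} $H|_{\calB_1(x)}$ is a subgroup of $\Alt_m$, but it need not equal $\Alt_m$. The normal-subgroup argument therefore only works directly if $H|_{\calB_1(x)}=\Alt_m$, and establishing this is the main obstacle. I would choose $H=G^1_{p'}$ with $p'=\{v_0,\dots,v_{n-2}\}$ intersected with the stabiliser of $v_{n-1}$ and $x$, and argue by induction along $p$.

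This fallback is easier: $G^1_p|_{\calB_1(x)}$ embeds in $\Alt(Y)$ and is normalised by the image of $G_{v_{n-1}}\cap G_x$. When $p$ is a single vertex, vertex transitivity and the local action $\Alt_\ell$ show that this image contains $\Alt(Y)$, since $\Fix_{\Alt_\ell}(\calN_1(Y))=\Alt(Y)$ is realised at $x$ by elements fixing $\calB_1(v_{n-1})$, by the extension in \Cref{U_v^n is a product}(1). For longer $p$, I would use the induction hypothesis on balls to transport this: the relevant fixator of the earlier vertices maps onto $\Alt(Y)$ at $x$. A normal subgroup of $\Alt_m$ with $m\ge5$ is $1$ or $\Alt_m$, which gives the first statement.

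For the "in particular", take $p$ to be the vertex set of $\calB_{n-1}(v)$, enumerated along normal paths. Then $\calN_1(p)=\calB_n(v)$, so $G^1_p=G^n_v$. A partly free $x\in\calS_n(v)$ is adjacent to a unique $v_{n-1}\in\calS_{n-1}(v)$, and the statement follows.
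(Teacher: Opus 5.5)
\textbf{There is a genuine gap, at exactly the step you flag as the ``main obstacle'', and your workaround does not work.}

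You look for a single group normalising $G^1_p$ whose restriction to $\calB_1(x)$ is all of $\Alt(Y)$. For longer $p$ you propose to show by induction that the fixators of the earlier vertices restrict \emph{onto} $\Alt(Y)$. That is false for general $p$. For example, $v_{n-2}=x$ is allowed, since $d(x,v_{n-1})=1$; then $G^1_{\{v_0,\dots,v_{n-2}\}}$ already fixes $\calB_1(x)$ and restricts trivially. In the intended application (paths $[y'\nwa v\nea y]$ in \Cref{local action in pf is alt}), even nontriviality at $x$ is the hard content of \Cref{Reduction to paths}. It does not follow from the inductive hypothesis on balls alone. Moreover, that claim only yields ``$\ne 1$'', and the present claim is what upgrades this to $\Alt_m$, so the claim is designed precisely to avoid any surjectivity statement.

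There are further problems in the details:
\begin{itemize}
\item The normaliser you actually name, $G_{v_{n-1}}\cap G_x$, does not normalise $G^1_p$, since it may move $v_0,\dots,v_{n-2}$.
\item In Step 2, stabilising $\calN_1(p)\cap\calB_1(x)$ is not what \Cref{fact: fix normal in stab} requires; $H$ must stabilise $\calN_1(p)$.
\item In the one-vertex case, $\Alt(Y)$ is realised at $x$ by elements of $G$ fixing the squares through $e$, coming from $\bfD_x(G_x)=\Alt_\ell$. It is not realised by elements fixing $\calB_1(v_{n-1})$. Also, \Cref{U_v^n is a product} concerns $U$, not $G$.
\end{itemize}

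\textbf{The missing idea:} no intermediate group needs to surject. Replace the single normaliser by a subnormal chain. Let $S$ be the set of squares containing $e$. Then
\[
G^1_{\{v_0,\dots,v_{n-1}\}}\normalin G^1_{\{v_1,\dots,v_{n-1}\}}\normalin\cdots\normalin G^1_{v_{n-1}}\normalin\Fix_G(S).
\]
Each step holds by \Cref{fact: fix normal in stab}. Since $d(v_i,v_{i+1})\le 1$, the larger group fixes $v_i$ and hence stabilises $\calB_1(v_i)$; and $\Fix_G(S)$ fixes $v_{n-1}$. All these groups fix $x$, so restricting to $\calB_1(x)$ preserves the chain.

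For the top term, $\Fix_G(S)$ consists exactly of the elements of $G_x$ whose local action at $x$ fixes $\calN_1(Y)$. Using $\bfD_x(G_x)=\Alt_\ell$ and \Cref{fix in the Odd graph}, its restriction is $\Alt(Y)\simeq\Alt_m$. A subnormal subgroup of a simple group is trivial or the whole group: if one term restricts trivially, so do all smaller terms. This gives the first statement with no induction on balls at all.

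Your derivation of the ``in particular'' part is fine once the first part is available. Running $p$ through all of $\calB_{n-1}(v)$ and ending at the neighbour of $x$ gives $G^1_p=G^n_v$ directly. The paper instead takes $p=[v\nea x]\setminus\{x\}$ and uses $G^n_v\normalin G^1_p$.
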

\begin{proof} 
     Let $S$ be the set of squares incident to both $v_{n-1}$ and $x$.
     Using \Cref{fact: fix normal in stab} we have: 
     $$ G^1_{\{v_i\}_{i=0}^{n-1}} \normalin G^1_{\{v_i\}_{i=1}^{n-1}} \normalin\ldots\normalin G^1_{v_{n-1}} \normalin \Fix_G(S).$$
     Restricting to $\calB_1(x)$ we obtain,
     $$ G^1_{\{v_i\}_{i=0}^{n-1}}|_{\calB_1(x)} \normalin G^1_{\{v_i\}_{i=1}^{n-1}}|_{\calB_1(x)} \normalin\ldots\normalin G^1_{v_{n-1}}|_{\calB_1(x)} \normalin \Fix_G(S)|_{\calB_1(x)}.$$
     By the assumption of the local action, $\bfD_x(G_x)=\Alt_{\ell}$, and by \Cref{fix in the Odd graph} the subgroup $\bfD_x(\Fix_G(S)) = \Alt(A_x)\simeq \Alt_m$ where $A_x$ is the label of the edge connecting $v_n$ to $x$.
     Equivalently, $\Fix_G(S)|_{\calB_1(x)} \simeq \Alt_m$.
     Since $m\geq5$, $\Alt_m$ is simple, and the subnormal subgroup $G^1_p|_{\calB_1(x)}$ is therefore trivial or the whole $\Alt_m$.

     For the second part of the lemma, consider  $x\in \calS_n(v)$ a  partly free vertex, and $[v\nea x]=\{v=v_0\ldots,v_{n-1},x\}$. Let $p=\{v_0,v_1,\dots,v_{n-1}\}$, then by \Cref{normal paths and pf}, $x$ is a neighbour of $v_{n-1}$.  So, by the first part of the lemma, $G^1_p|_{\calB_1(x)}=1$ or $\simeq \Alt_m$.
     Moreover, since $v\in p$, $G^n_v \normalin G^1_p$, so $G^n_v|_{\calB_1(x)} \normalin G^1_p|_{\calB_1(x)}$. By the simplicity of $\Alt_m$ it follows that $G^n_v|_{\calB_1(x)}=1$ or $\simeq\Alt_m$.
     \end{proof}


\begin{claim}\label{intersection of path fix} 
    $$G^n_v = \bigcap_{y'} G^1_{[v\nea y']}$$
    where $y'$ runs over the partly free vertices in $\calS_{n-1}(v)$.
\end{claim}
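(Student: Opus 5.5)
We prove the two inclusions separately. Throughout, $G^1_{[v\nea y']}$ is the fixator of $\calN_1([v\nea y'])$, the union of the balls $\calB_1(u)$ over the vertices $u$ of the normal path.

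\textbf{The inclusion $\subseteq$.} Let $y'\in\calS_{n-1}(v)$. Every vertex $u$ of $[v\nea y']$ satisfies $d(v,u)\le n-1$, because the normal path is an $\ell_\infty$ geodesic. Hence $\calB_1(u)\subseteq\calB_n(v)$ for each such $u$. It follows that $G^n_v$ fixes $\calN_1([v\nea y'])$, so $G^n_v\subseteq G^1_{[v\nea y']}$ for every $y'$, partly free or not.

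\textbf{The inclusion $\supseteq$.} Let $g$ lie in the intersection. We must show that $g$ fixes every vertex $x\in\calB_n(v)$.

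First we reduce to vertices of $\calS_n(v)$. Any $x$ with $d(v,x)\le n-1$ lies on some normal path $[v\nea y']$ with $y'\in\calS_{n-1}(v)$, since a geodesic can be extended outward. Then every vertex of $\calB_{n-1}(v)$, and its $1$-neighbourhood, is fixed by $g$, provided we can choose $y'$ partly free. If $x\in\calS_{n-1}(v)$ is free, \Cref{the neighbours of a free vertex are partly free} gives two partly free neighbours $x_1,x_2$ of $x$ in $\calS_{n-1}(v)$. By \Cref{normal paths and pf}, the penultimate vertex of $[v\nea x_i]$ is the unique neighbour of $x_i$ in $\calS_{n-2}(v)$. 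That neighbour is also the unique vertex of $\calS_{n-2}(v)\cap\calB_1(x)$, namely the corner of the unique square at $x$. So $x$ is adjacent to the penultimate vertex $x_i''$ of $[v\nea x_i]$, and therefore $x\in\calB_1(x_i'')$, which is fixed by $g$. Interior vertices of a normal path ending at a free vertex are handled by \Cref{normal paths and pf} in the same way: the path $[v\nea x]$ for a free $x$ passes through $x_i''$, which is a vertex of $[v\nea x_i]$. By induction along the path, all of $[v\nea x]$ is contained in the union of the $[v\nea y']$ with $y'$ partly free.

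Now let $x\in\calS_n(v)$. Its penultimate normal-path vertex $y$ lies in $\calS_{n-1}(v)$, and $x\in\calB_1(y)$. If $y$ is partly free we are done. Otherwise $y$ is free, and $g$ fixes $\calB_1(x_1)$ and $\calB_1(x_2)$, where $x_1,y,x_2$ are consecutive vertices of a square, namely the unique square of $\calB_{n-1}(v)$ at $y$. Then \Cref{auxiliary claim for the action of partly free vertices implies the  free one} gives $g\in G^1_y$, so $g$ fixes $\calB_1(y)\ni x$. Hence $g$ fixes $\calB_n(v)$ and $g\in G^n_v$.

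The main obstacle is the bookkeeping for free vertices of $\calS_{n-1}(v)$: we must verify that the neighbours $x_1,x_2$ are partly free, so that they index terms of the intersection, and that they lie with $y$ on a common square. Both facts come from \Cref{the neighbours of a free vertex are partly free} together with the definition of a free vertex. The same square argument also gives the fixation of free vertices of lower spheres, so a single application of \Cref{auxiliary claim for the action of partly free vertices implies the  free one} at each free vertex suffices.
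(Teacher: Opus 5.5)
Your proof is correct and follows the paper's route: cover $\calB_{n-1}(v)$ by the $1$-neighbourhoods of normal paths $[v\nea y']$ ending at partly free $y'\in\calS_{n-1}(v)$, then recover the $1$-neighbourhood of each free vertex $y\in\calS_{n-1}(v)$ from its two partly free neighbours via $G^1_{x_1}\cap G^1_{x_2}\subseteq G^1_y$ (this is what the paper's appeal to the product proposition amounts to), and you spell out the covering step that the paper only calls ``easy to see''. The one slip is cosmetic: a free endpoint $x\in\calS_{n-1}(v)$ is not itself on any $[v\nea y']$ with $y'$ partly free, so ``all of $[v\nea x]$'' should read ``all vertices of $[v\nea x]$ other than $x$''; and $x$ is fixed more directly because it lies in $\calB_1(x_1)$.
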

\begin{proof} 
First note that
$G^n_v 
\subseteq \bigcap_{y'} G^1_{[v\nea y']}$ is clear because $\calB_n(v)\supseteq \calN_1
[v\nea y']$ for every $y'\in \calS_{n-1}(v) $. 

For the other inclusion, 
let $g\in \bigcap_{y'} G^1_{[v\nea y']}$.
    We want to show, that in fact, $g$ fixes all of $\calB_n(v)$. 
    By \Cref{U_v^n is a product} it suffices to show that $g$ fixes $\calB_{n-1}(v)$ and the 1-neighbourhood $\calB_1(x)$ of each partly free vertex $x\in\calS_{n-1}(v)$. 
    It is easy to see that every vertex $z$ in $\calB_{n-1}(v)$ is in the 1-neighbourhood of some normal path $[v\nea y]$ for some $y\in \calS_{n-1}(v)$ and so $g$ fixes $z\in \calN_1([v\nea y])$. Clearly, for every partly free $x\in \calS_{n-1}(v)$, $g$ fixes the 1-neighbourhood $\calN_1([v\nea x])$ and in particular it fixes $\calB_1(x)$.
\end{proof}

     Denote by $[y'\nwa v \nea y]=[v\nea y']\cup [v\nea y]$.
\begin{claim}\label{Reduction to paths}
     Let $x\in \calS_n(v)$ be a partly free vertex, and let $y\in \calS_{n-1}(v)$ be the vertex connected to $x$ in $\calS_{n-1}(v)$.
    Then for every partly free $y'\in \calS_{n-1}(v)$, we have $G^1_{[y'\nwa v\nea y]}|_{\calB_1(x)}\ne 1$.
\end{claim}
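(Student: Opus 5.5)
The plan is to strengthen the statement and argue by induction on the length of a sequence. Concatenate the reversed normal path $[v\nea y']$ with $[v\nea y]$ to get a sequence $p=(p_0=y',\dots,v,\dots,p_k=y)$ with $d(p_i,p_{i+1})\le 1$. By construction $x$ is a neighbour of $p_k$. By \Cref{non trivial implies alt} we already know $G^1_p|_{\calB_1(x)}$ is either $1$ or $\simeq\Alt_m$. So it suffices to rule out triviality. I will prove the more general statement that $G^1_q|_{\calB_1(x)}\ne 1$ for every terminal subsequence $q=(p_j,\dots,p_k)$ of $p$, by downward induction on $j$.

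\textbf{Base case.} When $q$ lies on $[v\nea y]$ only (i.e.\ $j$ is at or after the position of $v$), we have $\calN_1(q)\subseteq\calB_n(v)$ and $x\in\calS_n(v)$ is partly free. The group $U^n_v$ acts on $\calB_1(x)$ as $\Alt(Y_b)\ne 1$ by \Cref{U_v^n is a product}(1). To transfer this to $G$, I would recentre at a vertex $u$ of $[v\nea y]$ close to $y$ (for instance $u=y$ or its predecessor), so that $\calN_1(q)\cup\calB_1(x)\subseteq\calB_{n}(u)$. Vertex transitivity and the induction hypothesis give $G_u|_{\calB_n(u)}=U_u|_{\calB_n(u)}$. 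Hence any element of $U_u$ fixing $\calN_1(q)$ and acting nontrivially at $x$ can be realised in $G$. Such an element of $U$ is produced, as in \Cref{U_v^n is a product}, by taking an automorphism $\Phi_{x,\phi}$ with $\phi\in\Alt(Y_b)$ and truncating it by the identity across the hyperplane $\calH_b$ separating $x$ from $y$.

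\textbf{Inductive step.} Set $q=(p_j,\dots,p_k)$ and $q'=(p_{j+1},\dots,p_k)$, and suppose $G^1_{q'}|_{\calB_1(x)}\simeq\Alt_m$ while $G^1_q|_{\calB_1(x)}=1$. By \Cref{fact: fix normal in stab} we have $G^1_q\normalin G^1_{q'}$, and $G^1_q$ is exactly the kernel of the restriction of $G^1_{q'}$ to $\calB_1(p_j)$. Triviality at $x$ therefore means the action of $G^1_{q'}$ on $\calB_1(x)$ factors through its action on $\calB_1(p_j)$. In the language of components, $G^1_{q'}|_{\calB_1(p_j)\cup\calB_1(x)}$ contains an $\Alt_m$-component that is diagonal, projecting nontrivially to both $\calB_1(p_j)$ and $\calB_1(x)$. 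To contradict this, I would exhibit $g\in G^1_{q'}$ that fixes $\calB_1(p_j)$ but acts nontrivially on $\calB_1(x)$, or vice versa. In $U$ such an element exists by the hyperplane-truncation trick. Here \Cref{auxiliary claim for components in universal} and the sector structure of \Cref{blockes are not diveded by sectors} supply a hyperplane separating $p_j$ (on the $y'$ side) from $x$ (on the $y$ side) while avoiding the relevant portion of $\calN_1(q')$.

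\textbf{Main obstacle.} The hard step is lifting this separating element from $U$ to $G$. The full configuration $\calN_1(q)\cup\calB_1(x)$ need not lie in a single ball of radius $n$. I would handle this by working locally: only the restriction to a neighbourhood of $p_j$ together with the $\Alt_m$-action at $x$ matters, because the diagonal component is detected by two projections. I would then choose a recentring vertex on $q$ so that the needed finite data fits in a ball of radius $n$, where the hypothesis $G_u|_{\calB_n(u)}=U_u|_{\calB_n(u)}$ applies. The base hypothesis on $\calB_2$ (with $n\ge 2$) ensures the smallest cases are covered. If this recentring fails for long paths, my fallback is to apply the induction on the path length from both ends simultaneously, shrinking the path from whichever end keeps the configuration inside a ball of radius $n$.
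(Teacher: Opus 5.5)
\textbf{There is a genuine gap: the final step of your induction, adding $p_0=y'$.} You flag this as the main obstacle but do not resolve it.

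Your base case is sound. Recentring at $\bar v$ (the first vertex of $[v\nea x]$) does carry your downward induction through every terminal subsequence up to $[z\nwa v\nea y]$, where $z$ is the predecessor of $y'$. It also handles the full path when $d(\bar v,y')\le n-2$ and $n\ge 3$. You must check that $v$ and $z$ do not lie in the extended block of $x$ in $\calS_{n-1}(\bar v)$. This matches the paper's Case~1 and Steps~1--3.

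When $d(\bar v,y')\ge n-1$, recentring cannot work. Already for $n=2$ you can have $d(y',x)=2n-1$: take edge labels $A$ at $y'$ and $Y$ at $y$ that are non-adjacent in $L$, and $x$ beyond $y$ along a label not adjacent to $Y$. Then no ball $\calB_n(u)$ contains $\calB_1(y')\cup\calB_1(x)$. So the hypothesis $G_u|_{\calB_n(u)}=U_u|_{\calB_n(u)}$ never produces an element of $G$ controlled on both balls. In particular the $\calB_2$ hypothesis does not cover even the smallest case.

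Your contradiction scheme does not help either. An element of $G^1_{q'}$ that fixes $\calB_1(p_j)$ and is nontrivial on $\calB_1(x)$ is, by definition, an element of $G^1_q$ nontrivial on $\calB_1(x)$, i.e.\ the statement itself. The ``vice versa'' element (fixing $\calB_1(x)$, nontrivial at $p_j$) contradicts the factorization only after you prove the factor map injective. Producing it in $G$ again needs simultaneous control of $\calB_1(x)$ and $\calB_1(y')$. ``Working locally'' and ``shrinking from both ends'' leave this required data unchanged.

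\textbf{The missing idea} (the paper's Case~2) is to never control $\calB_1(x)$ and $\calB_1(y')$ at once.
\begin{enumerate}
\item Put $H=G^1_{[z\nwa v\nea y]}$. By recentring at $\bar v$, the image of $H$ on $\calB_1(x)$ is a full copy of $\Alt_m$.
\item Suppose $G^1_{[y'\nwa v\nea y]}|_{\calB_1(x)}=1$. Then restriction of $H$ to $\calB_1(x)$ factors through restriction to $\calB_1(y')$.
\item The factor map is an isomorphism by counting. $H$ fixes $\calB_1(z)$ and $z$ is an edge-neighbour of the partly free $y'$, so the image at $y'$ lies in a copy of $\Alt_m$.
\item Lift from $U$, using the hypothesis on $\calB_n(v)$, an element $g\in G_v$ that merely fixes the \emph{vertices} $y,z,x\in\calB_n(v)$ and sends $y'$ to some $y''\ne y'$.
\item Then $g$ normalizes $H$ and fixes $x$. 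Conjugating shows restriction to $\calB_1(y'')$ has the same kernel on $H$ as restriction to $\calB_1(x)$, so $H\cap G^1_{y'}=H\cap G^1_{y''}$.
\item This is contradicted by an element of $G^{n-1}_v$ fixing $\calB_1(y)\cup\calB_1(y')$ but not $\calB_1(y'')$. The induction hypothesis supplies it, because everything now lives inside $\calB_n(v)$.
\end{enumerate}
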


\begin{proof} 

    Let $y'\in \calS_{n-1}(v)$ be a partly free vertex, and 
    let $\bar{v}$ be the first step of $[v\nea x]$, i.e., $\bar{v}=[v\nea x]\cap \calS_1(v)$, as shown in  \Cref{fig: reduction to paths}   
    \begin{figure}
        \centering
        \includegraphics{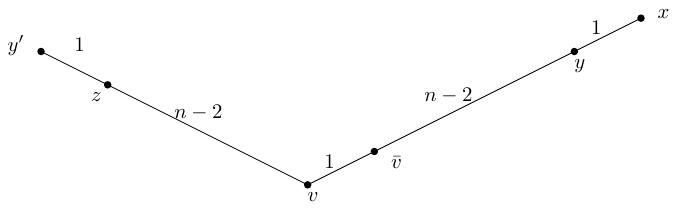}
        \caption{An example for normal path  $[y'\nwa v \nea x]$} \label{fig: reduction to paths}
    \end{figure} 

\bigskip
    
    \textbf{Case 1.}
    Assume that both  $d(\bar{v},y')\le n-2$ and $n\ge 3$. 
    
    Since $d(\bar{v},y)=n-2$, we have $y,y',v\in \calB_{n-2}(\bar{v})$. 
    Since $y'$ is partly free, the vertices in $\calN_1([y'\nwa v])$ are either in $\calB_1(y')$ or $\calB_{n-2}(v)$.
    In both cases, they are in $\calB_{n-1}(\bar v)$. 
    Clearly, $\calN_1([v\nea y])\subseteq \calB_{n-1}(\bar v)$.
    Thus, 
    $$\calN_1([y'\nwa v \nea y]) \subseteq \calB_{n-1}(\bar{v}) \quad \text{and}\quad  x\in \calB_{n-1}(\bar{v}).$$
    
    It follows  by the induction hypothesis that
    $$ G^1_{[y'\nwa v \nea y]}|_{\calB_1(x)} \ge G^{n-1}_{\bar{v}}|_{\calB_1(x)} = U^{n-1}_{\bar{v}}|_{\calB_1(x)} \ne 1.$$
    and so $G^1_{[y'\nwa v \nea y]}|_{\calB_1(x)}\ne 1$ as desired.

    \bigskip

    \textbf{Case 2.} 
    Assume either   $d(\bar{v},y')\ge n-1$, or $n=2$, 
    and assume for the sake of contradiction that $G^1_{[y'\nwa v\nea y]}|_{\calB_1(x)}= 1$.

    Let $z$ be the neighbour in $\calS_{n-2}(v)$ of the partly free vertex $y'$. 

    \textbf{Step 1.} $v\in \calB_{n-1}(\bar v)$ and if $v\in \calS_{n-1}(\bar v)$ then $v$ is not in the extended block of $x$.

    Clearly $d(\bar v,v)=1 \le n-1$, so $v\in \calB_{n-1}(\bar v)$. If $v\in \calS_{n-1}(\bar{v})$, then $n=2$. In this case, we are considering $\calS_1(\bar{v})$, where all blocks are of size one. So for $v$ to be in the extended block of $x$, we need that $d(x,v)=1$, but $x\in \calS_2(v)$. 
    
    \textbf{Step 2.} $z\in \calB_{n-1}(\bar v)$ and if $z\in \calS_{n-1}(\bar v)$ then $z$ is not in the extended block of $x$.

    We have $d(z,\bar v)\le d(z,v)+d(v,\bar v) = n-1$ so $z\in \calB_{n-1}(\bar v)$.
    Assume $z\in \calS_{n-1}(\bar v)$. 
    Consider the hyperplane $\calH$  separating $x$ and $y$. Let $h$ be the halfspace containing $x$ and $h^*$ the complementary halfspace.
    The extended block of $x$ is contained in $h$. It suffices to show that $z\in h^*$.
    Indeed, since $n=d(v,x)$, it follows that $d(v,\calH)\ge n-1$, but $d(v,z)= n-2$. Hence, $z$ is on the same side of $\calH$ as $v$, i.e., $z\in h^*$.
    
    

    \textbf{Step 3.} 
    $ G^1_{[z\nwa v\nea y]}|_{\calB_1(x)} \simeq \Alt_m$.
    
    Let $\sigma\in \Alt_m$.
    By the previous steps, the vertices $z,v$ are in $\calB_{n-1}(\bar v)$ and are not in the extended block of $x$ in $\calS_{n-1}(\bar{v})$. 
    By \Cref{U_v^n is a product}, we claim that there exists an element $g\in G^{n-1}_{\bar{v}}$ such that 
    $\bfD_x g =\sigma$ but $\bfD_v g=\bfD_z g=1$: This is immediate if each of $x,z$ is either partly free in $\calS_{n-1}(\bar v)$ or in $\calB_{n-2}(\bar v)$. If either of $z,v$ is a free vertex in $\calS_{n-1}(\bar{v})$, then by part (3) of \Cref{U_v^n is a product}, the local action of $g$ at that vertex is determined by the blocks it is connected to by an edge, neither of which is the block containing $x$.
    This shows that $\bfD_xg$ can be chosen independently of $\bfD_zg$ and $\bfD_vg$.
    
    Finally, since $\calN_1([z\nwa v \nea y]) \subseteq \calB_{n-1}(\bar{v})\cup \calB_1(v)\cup \calB_1(z)$, we get that $g\in G^1_{[z\nwa v \nea y]} |_{\calB_1(x)}.$
    And so $G^1_{[z\nwa v \nea y]} |_{\calB_1(x)}\simeq \Alt_m$.
    This finishes the proof of Step 3.

    \bigskip
    
    By \Cref{normal paths and pf} $z$ is the penultimate vertex in $[v\nea y']$. Hence $[y'\nwa z]\cup[z\nwa v\nea y]= [y'\nwa v\nea y]$ and, 
    \begin{equation*}\label{intersection of stabs}
        G^1_{y'}\cap G^1_{[z\nwa v\nea y]} =G^1_{[y'\nwa v\nea y]}
    \end{equation*}
    Using our assumption that $G^1_{[y'\nwa v\nea y]}|_{\calB_1(x)}=1$, we get that for every $g\in G^1_{[z\nwa v\nea y]}$:
    \begin{equation}\label{eq: fix one ball fix the other}
    g|_{\calB_1(y')}=1 \implies g|_{\calB_1(x)}=1.
    \end{equation}

    We consider the restrictions of $G^1_{[z\nwa v\nea y]}$ to the balls of radius 1 around $y',x$:
    $$P_{y'}:G^1_{[z\nwa v\nea y]} \to  G^1_{[z\nwa v\nea y]}|_{\calB_1(y')} \text{ and } P_{x}:G^1_{[z\nwa v\nea y]}\to G^1_{[z\nwa v\nea y]}|_{\calB_1(x)}.$$
  
    We saw in \eqref{eq: fix one ball fix the other} that $\ker P_{y'} \le \ker P_x$, therefore $P_x$ factors through $P_{y'}$. That is, there is a homomorphism $\iota: \im(P_{y'})\to \im(P_x)$ such that $P_x=\iota \circ P_{y'}$.

    \textbf{Step 4.} The map $\iota : G^1_{[z\nwa v\nea y]}|_{\calB_1(y')}\to  G^1_{[z\nwa v\nea y]}|_{\calB_1(x)}$ is an isomorphism.

    Note that $\iota$ is surjective by definition. 
    We will  show that $\iota$ is an isomorphism by showing that
    $|\im(P_x)|\ge |\im(P_{y'})|$:



    Any element in $\im(P_{y'})$ fixes $\calB_1(z)$, and in the link $\Lk(y',X_L)$, it fixes the star of the vertex corresponding to the edge that connects $y'$ to $z$. By \Cref{fix in the Odd graph}, such a fixator is isomorphic to $\Alt_m$. Therefore, $|\im(P_{y'})| \le |\Alt_m|$.
    And by the previous step, $|\im(P_x)| =|\Alt_m|$.
    This shows that $|\im(P_{y'})|\le |\im(P_x)|$ and hence that $\iota$ is an isomorphism. 
    
    \textbf{Step 5.} There exists $g\in G_v^{n-2}$ such that $g(x)=x$, $g(z)=z$ and $g(y')\ne y'$.

    Let $e$ be an edge incident to $z$, such that the hyperplane $\calH$ through $e$ separates $z$ from $y$ and $v$ (unless $n=2$, in which $\calH$ separates $z=v$ from $y$).
    Let $h^*$ be the halfspace containing $y$. Then we have $z,v,y,x\in\calN_1(h^*)$.
    
    Let $Y_e$ be the label of the hyperplane $\calH$. Let $Y'$ be the label of the edge corresponding to $y'$ in $\Lk(z,X)$. Let $\phi \in \Alt(Y_e)$ be such that $\phi(Y')\ne Y'$.
    As in the proof of \Cref{U_v^n is a product}, there exists an element $\Phi'\in U$ such that $\bfD_z(\Phi')=\phi$ and $\Phi'$ is the identity on $\calN_1(h^*)$.
    In particular $\Phi'(v)=v, \;\Phi'(x)=x, \;\Phi'(z)=z$ and $ \Phi'(y')\ne y'$

    By the induction hypothesis there exists $g\in G_v$ such that $g|_{\calB_n(v)} = \Phi'|_{\calB_n(v)}$. So, $g(v)=v, \;g(x)=x,\; g(z)=z$ and $g(y') \ne y'$.

    \textbf{Step 6.} Deriving a contradiction.

    Let $g$ as in Step 5. 
    Denote $y''=g(y')$.

    Since $g\; G^1_{[y'\nwa v\nea y]}\;g\ii= G^1_{[y''\nwa v\nea y]}$ and $g(x)=x$ we get 
    $G^1_{[y''\nwa v\nea y]}|_{\calB_1(x)}= 1$
    Then, as for $y'$, we can consider  the restriction 
    $P_{y''}:G^1_{[z\nwa v\nea y]} \to  G^1_{[z\nwa v\nea y]}|_{\calB_1(y'')}$. 
    By Step 4 the map $\iota':\im(P_{y''})\to \im(P_x)$ will also be a bijection. 
    The composition of $(\iota')\ii\circ\iota: \im(P_{y'})\to \im(P_{y''})$ is injective. 
    It follows that  $$G^1_{[z\nwa v\nea y]}\cap G^1_{y'} \le  G^1_{[z\nwa v\nea y]}\cap G^1_{y''}$$
    Since $G^{n-1}_v\cap G^1_y\le G^1_{[z\nwa v\nea y]}$, we get 
    \begin{equation}\label{eq: intersection}
    G^{n-1}_v\cap G^1_y\cap G^1_{y'} \le G^{n-1}_v\cap G^1_y\cap G^1_{y''}
    \end{equation}

    On the other hand, $y',y''$ are partly free in $\calS_{n-1}(v)$ in different blocks, and $y$ is not in the extended block of $y',y''$. So by \Cref{U_v^n is a product} there is an element $h\in G^{n-1}_v$ such that $h|_{\calB_1(y)}=\id$, $h|_{\calB_1(y')}=\id$ but $h|_{\calB_1(y'')}\ne \id$.
    This contradicts \eqref{eq: intersection}.
    \end{proof}

\begin{claim}\label{local action in pf is alt}
    If $x\in\calS_n(v)$ is partly free, 
    then $G^n_v|_{\calB_1(x)}\simeq \Alt_m$.
    In particular, $\Fix_X(G^n_v)=\calB_n(v)$.
\end{claim}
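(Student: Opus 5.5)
The plan is to first note that, by \Cref{non trivial implies alt}, $G^n_v|_{\calB_1(x)}$ is either trivial or $\simeq\Alt_m$, so the whole point is to show it is non-trivial. The ``in particular'' then follows: if every partly free $x\in\calS_n(v)$ is moved by some element of $G^n_v$, then no vertex of $\calS_{n+1}(v)$ is fixed by all of $G^n_v$. Indeed, by \Cref{U_v^n is a product} every vertex of $\calS_{n+1}(v)$ is adjacent to a partly free vertex of $\calS_n(v)$, and the faithful $\Alt_m\simeq\Alt(Y_b)$ action at such a vertex fixes no vertex of the link outside the star of $Y_b$. Hence $\Fix_X(G^n_v)=\calB_n(v)$.

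For non-triviality, let $y\in\calS_{n-1}(v)$ be the neighbour of $x$ and set $M=G^{n-1}_v\cap G^1_y$. I will use $M$ as an ambient group in which the relevant fixators become \emph{normal}. For every partly free $y'\in\calS_{n-1}(v)$, $M$ fixes $y'$ and hence stabilizes $\calB_1(y')$ setwise. So by \Cref{fact: fix normal in stab} the group $N_{y'}:=M\cap G^1_{y'}$ is normal in $M$, and $N_{y'}|_{\calB_1(x)}\normalin M|_{\calB_1(x)}$. Moreover $M|_{\calB_1(x)}$ is a subgroup of $\Fix_G(S)|_{\calB_1(x)}\simeq\Alt_m$, with $S$ the squares at the edge $xy$. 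By \Cref{intersection of path fix} (every $\calN_1([v\nea y'])$ lies in $\calB_{n-1}(v)\cup\calB_1(y')$), we get $G^n_v=\bigcap_{y'}N_{y'}$, a finite intersection.

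Now I use a commutator trick. If $A,B\normalin M$ with $A|_{\calB_1(x)}=B|_{\calB_1(x)}=\Alt_m$, then $[A,B]\le A\cap B$ is normal in $M$. Its restriction is $[\Alt_m,\Alt_m]=\Alt_m$, since $\Alt_m$ is perfect for $m\ge5$. Iterating over the finitely many $y'$ gives $G^n_v|_{\calB_1(x)}\supseteq[\dots[N_{y'_1},N_{y'_2}],\dots]|_{\calB_1(x)}=\Alt_m$. So it remains to show, for each partly free $y'$, that $N_{y'}|_{\calB_1(x)}\ne1$; once it is non-trivial and normal in a subgroup of the simple group $\Alt_m$, it is all of it. (The case $y'=y$ is handled by the same argument.)

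The main obstacle is upgrading \Cref{Reduction to paths}, which gives non-triviality for the larger group $G^1_{[y'\nwa v\nea y]}$, to the smaller group $N_{y'}$, which must in addition fix all of $\calB_{n-1}(v)$. My plan is to rerun the proof of \Cref{Reduction to paths} with the path neighbourhood replaced by $\calB_{n-1}(v)\cup\calB_1(y)\cup\calB_1(y')$. In Case 1 one still has $\calB_{n-1}(v)\cup\calB_1(y)\cup\calB_1(y')\subseteq\calB_{n}(\bar v)$ up to the vertices over which \Cref{U_v^n is a product} allows independent local actions. I would then use the induction hypothesis at level $n$ around $\bar v$, rather than $n-1$, to produce the required element.

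In Case 2 the contradiction argument of Steps 3--6 should go through verbatim with $G^1_{[z\nwa v\nea y]}$ replaced by $G^{n-1}_v\cap G^1_y$. The key points are these. Step 3 supplies $\Alt_m$ at $x$ via \Cref{U_v^n is a product} and the induction hypothesis. Step 4 is the same order count. Step 5 produces $g\in G_v$ fixing $x,z$ and moving $y'$, and $g$ normalizes $G^{n-1}_v\cap G^1_y$. Step 6 then gives the same contradiction with an element $h\in G^{n-1}_v$. If this substitution fails somewhere, the fallback is to induct along the chain $G^1_{[y'\nwa v\nea y]}\ge\dots\ge N_{y'}$, adding one vertex ball at a time, and to apply the same commutator/normality argument at each step.
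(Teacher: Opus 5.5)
\textbf{Comparison with the paper.} Your route differs from the paper's at the decisive point, and yours is the more careful one. The paper restricts $G^n_v=\bigcap_{y'}G^1_{[y'\nwa v\nea y]}$ to $\calB_1(x)$ and computes the intersection of the restrictions, each of which equals $\Alt_m$ by \Cref{Reduction to paths} and \Cref{non trivial implies alt}. But the restriction of an intersection is only \emph{contained} in the intersection of the restrictions. So that step yields only the upper bound $G^n_v|_{\calB_1(x)}\le\Alt_m$, which was already known.

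Your device gives the lower bound correctly: all the $N_{y'}$ are normal in the common group $M=G^{n-1}_v\cap G^1_y$, one has $[A,B]\le A\cap B$, and $\Alt_m$ is perfect. The price is that you need non-triviality for the smaller groups $N_{y'}$. You only sketch this, but it does hold.
\begin{itemize}
\item If $d(\bar v,y')\le n-1$ (this contains the paper's Case 1), then $\calB_{n-1}(v)\cup\calB_1(y)\cup\calB_1(y')\cup\calB_1(x)\subseteq\calB_n(\bar v)$. By the induction hypothesis, any $u\in U^n_v$ with $\bfD_xu\ne 1$ agrees on $\calB_n(\bar v)$ with some $g\in G_{\bar v}$. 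This $g$ lies in $N_{y'}$ and acts non-trivially at $x$.
\item Omitting $y'$, the same transfer gives $M|_{\calB_1(x)}=\Alt_m$ in all cases. This replaces the paper's Step 3, whose element need not fix $\calB_{n-1}(v)$, so Step 3 is not ``verbatim''.
\item If $d(\bar v,y')=n$, Steps 4--6 go through with $M$. The element $g$ of Step 5 fixes $v$, $y$ and $x$, so it normalizes $M$, conjugates $N_{y'}$ to $N_{y''}$, and preserves triviality at $x$. Step 4 then gives $N_{y'}=M\cap G^1_x=N_{y''}$. The element $h$ of Step 6 lies in $N_{y'}$ but not in $N_{y''}$, a contradiction.
\end{itemize}

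\textbf{Two local statements need repair.}

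First, ``non-trivial and normal in a subgroup of the simple group $\Alt_m$, hence all of it'' is false in general. You need $M|_{\calB_1(x)}=\Alt_m$ first, which the transfer above supplies. Alternatively, use subnormality of $N_{y'}$ in $\Fix_G(S)$ as in \Cref{non trivial implies alt}.

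Second, for the ``in particular'' (which the paper does not argue at all), it is not true that every vertex of $\calS_{n+1}(v)$ is adjacent to a partly free vertex of $\calS_n(v)$. Let $y\in\calS_n(v)$ be free, with partly free neighbours $x_1,x_2$ along edges labelled $E_1,E_2$. An edge at $y$ whose label $\omega$ meets both $E_1$ and $E_2$ leads to a vertex of $\calS_{n+1}(v)$ at distance $2$ from every partly free vertex of $\calS_n(v)$. For example, with $n=1$ and $v=1$, take the vertex $E_1E_2\omega\in W_L$.

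For such vertices use the mechanism behind \Cref{U_v^n is a product}(3). For $g\in G^n_v$, the local actions $\bfD_yg$ and $\bfD_{x_1}g$ agree on the labels adjacent to $E_1$, hence on the points of $[\ell]\setminus E_1\supseteq E_2$. Moreover $\bfD_{x_1}g\in\Alt(E_2)$, and by the first part of the claim it runs over all of $\Alt(E_2)$. Since $\omega\cap E_2$ is a non-empty proper subset of $E_2$, some $g$ moves $\omega$. Vertices farther out are then excluded, since $G^n_v$ would fix the normal path from $v$ to them.
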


\begin{proof}
    
    For $n\ge 2$ Let $x\in \calS_{n}(v)$ be partly free.
    By \Cref{intersection of path fix}, 
    $G^n_v = \bigcap_{y'} G^1_{[v \nea y']}$ where $y'$ runs over the partly free vertices in $\calS_{n-1}(v)$.
    In particular, for every $y\in \calS_{n-1}(v)$, we have: 
    $$G^n_v = \bigcap_{y'} G^1_{[y'\nwa v \nea y]}$$
    Restricting to $\calB_1(x)$ we get
    \begin{equation}\label{intersecting the path stabs}
        G^n_v|_{\calB_1(x)} = \bigcap_{y'} G^1_{[y'\nwa v \nea y]}|_{\calB_1(x)}.
    \end{equation}
    Both $G^n_v|_{\calB_1(x)}$ and $G^1_{[y'\nwa v \nea y]}|_{\calB_1(x)}$ fix the star of a vertex in $\Lk(x,X)$ and so they are both subgroups of a group isomorphic to $\Alt_m$.
    By \Cref{Reduction to paths} we have that $G^1_{[y'\nwa v \nea y]}|_{\calB_1(x)}\ne 1$, and  then \Cref{non trivial implies alt} implies $G^1_{[y'\nwa v \nea y]}|_{\calB_1(x)}\simeq\Alt_m$.  By \eqref{intersecting the path stabs}
     the result follows.   
\end{proof}

Let us recall that for any vertex $a\in \calS_1(v)$, we defined the \textit{$n$-th sector of $a$} to be $\calS_n(v,a):=\{x\in P_n(v)\ |\  a\in [v\sea x]  \}$, where $P_n(v)$ is the set of partly free vertices in $\calS_n(v)$.

    
\begin{claim}\label{sector is universal}
    For every $a\in \calS_1(v)$,
    $$G^n_v|_{\calN_1(\calS_n(v,a))}=U^n_v|_{\calN_1(\calS_n(v,a))}.$$
\end{claim}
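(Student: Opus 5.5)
We reduce the claim to a statement about $\calB_{n-1}(a)$ and then apply the induction hypothesis at the vertex $a$, which is legitimate because $G$ is vertex transitive. Fix $a\in\calS_1(v)$.

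\textbf{Step 1: containment.} We show that $\calN_1(\calS_n(v,a))\subseteq \calB_n(a)$ and that every $x\in \calS_n(v,a)$ lies in $\calS_{n-1}(a)$. For $x\in \calS_n(v,a)$ we have $a\in[v\sea x]$, so $d(a,x)=n-1$. Hence $\calB_1(x)\subseteq \calB_n(a)$. Moreover, $\calB_n(v)$ meets $\calB_1(x)$ only in $\calB_n(v)\cap\calB_1(x)$.

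\textbf{Step 2: describe the target group.} By \Cref{U_v^n is a product}(1),(2) and \Cref{blockes are not diveded by sectors}, the group $U^n_v|_{\calN_1(\calS_n(v,a))}$ is identified with the product $\prod_b \Alt(Y_b)$ over the blocks $b\subseteq\calS_n(v,a)$. Free vertices between two such blocks contribute nothing new, by part (3). So it suffices to realize independent elements of each factor inside $G^n_v$.

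\textbf{Step 3: per-block generators.} For a block $b\subseteq\calS_n(v,a)$ and $\sigma\in\Alt(Y_b)$, we want $g\in G^n_v$ with $\bfD_x g=\sigma$ for $x\in b$ and $g$ trivial on $\calB_1(b')$ for every other block $b'$ of the sector. The plan is as follows.
\begin{itemize}
\item \Cref{local action in pf is alt} already gives $G^n_v|_{\calB_1(x)}\simeq\Alt_m$ for each single partly free $x$.
\item The task is therefore an independence statement. We mimic the component argument of \Cref{local action options}. Regard $K=G^n_v|_{\calN_1(\calS_n(v,a))}$ as a subgroup of $\prod_b\Alt(Y_b)$ that surjects onto each factor.
\item By \Cref{properties of components}, each component of $K$ is a diagonal subgroup over some subset of the blocks, and the components have disjoint supports.
\item It remains to rule out diagonals, i.e.\ to show that each component has a single block as its support.
\end{itemize}

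\textbf{Step 4: rule out diagonals (main obstacle).} Suppose two distinct blocks $b,b'$ of the sector were diagonally linked. Then fixing $\calB_1(b)$ would force fixing $\calB_1(b')$ inside $G^n_v$. We contradict this using the induction hypothesis at $a$, namely $G_a|_{\calB_n(a)}=U_a|_{\calB_n(a)}$, which holds by vertex transitivity.
\begin{itemize}
\item Both $b$ and $b'$ lie in $\calS_{n-1}(a)$. By \Cref{U_v^n is a product} applied at $a$, we can choose $u\in U^{n-1}_a$ acting trivially near $b$ and nontrivially near $b'$.
\item We then need $u$ to also fix $\calB_n(v)$. This requires showing $\calB_n(v)\cap\calB_n(a)$ is covered by $\calB_{n-1}(a)$ together with the partly free parts of $\calS_{n-1}(a)$ outside the sector.
\item We also need to check that $b$ and $b'$ lie in different blocks of $\calS_{n-1}(a)$ and not in a common extended block there; the hyperplane/halfspace description from the proof of \Cref{auxiliary claim for components in universal} should give this.
\item Lifting $u$ to $G_a$ via the induction hypothesis yields an element of $G^n_v$ that breaks the diagonal, so $K$ is the full product.
\end{itemize}
The delicate geometric point is exactly this comparison between the spheres centred at $v$ and at $a$: whether $\calB_n(v)\cap\calB_n(a)$ really sits inside the region where $u$ can be taken trivial. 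I expect to handle it through Step 1 and a case analysis on whether a given vertex is free or partly free in $\calS_{n-1}(a)$.
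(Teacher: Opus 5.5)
Step 4 has a genuine gap in its last bullet. The induction hypothesis at $a$ only gives $G_a|_{\calB_n(a)}=U_a|_{\calB_n(a)}$. So a lift $g\in G_a$ of your $u$ agrees with $u$ only on $\calB_n(a)$. It lies in $G^{n-1}_a$, but nothing forces it into $G^n_v$, because $\calB_n(v)\not\subseteq\calB_n(a)$. For example, if $a'\in\calS_1(v)$ spans no square with $a$, then vertices of $\calS_n(v)$ reached through $a'$ are typically at distance $n+1$ from $a$, and $g$ is uncontrolled on them. So neither arranging $u\in U^n_v$ nor the comparison of $\calB_n(v)\cap\calB_n(a)$ that you single out helps. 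The lift still need not be an element of $G^n_v$, and so gives no element of $K$ separating $b$ from $b'$. A cleverer choice of $u$ cannot fix this: the induction hypothesis alone never produces elements of $G^n_v$ with prescribed behaviour on $\calS_{n+1}(v)$. Producing such elements is exactly what the inductive step has to establish.

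The missing idea is to use the lift as a conjugating element rather than as an element of $G^n_v$. For $n\ge 2$ we have $\calB_{n-1}(a)\subseteq\calB_n(v)$ and $v\in\calB_{n-1}(a)$, so $G^n_v\le G^{n-1}_a\le G_v$. By \Cref{fact: fix normal in stab}, $G^n_v\normalin G^{n-1}_a$. Restrict to $\calN_1(\calS_n(v,a))\subseteq\calB_n(a)$ (your Step 1) and apply the induction hypothesis at $a$. This gives $K\normalin G^{n-1}_a|_{\calN_1(\calS_n(v,a))}=U^{n-1}_a|_{\calN_1(\calS_n(v,a))}\simeq\prod_b\Alt_m$. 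This identification implicitly uses the geometric fact you flagged, that distinct blocks of the sector are distinct blocks of $\calS_{n-1}(a)$. A normal subgroup of a finite direct product of non-abelian simple groups is a product of some of the factors, and \Cref{local action in pf is alt} shows that every factor occurs. This is the paper's proof. It needs neither the component analysis nor $u$ to fix $\calB_n(v)$.

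If you want to keep your diagonal framework, the same repair works. Take a diagonal component over $\{b,b'\}$ and conjugate it by the restriction of your lift $g$. That restriction normalizes $K$, is trivial at $b$, and acts by some $\tau\ne 1$ at $b'$. The conjugate is a different component whose support also contains $b$, which contradicts \Cref{properties of components}. You would then also need to justify that $K$ is generated by its components. This is true for subdirect products of non-abelian simple groups, but your proposal does not address it.
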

\begin{proof}
    We have $ G_{v}^n\triangleleft G_a^{n-1}$, therefore (because $\calS_n(v,a)$ is not empty) 
    $$G_{v}^n|_{\calN_1(\calS_n(v,a))} \triangleleft G_a^{n-1}|_{\calN_1(\calS_n(v,a))}= U^{n-1}_a |_{\calN_1(\calS_n(v,a))}\simeq \prod_b \Alt_m.$$  Where $b$ runs over all 
    blocks in $\calS_n(v,a)$.
    Now, since $ G_{v}^n|_{\calN(\calS_n(v,a))}$ is normal inside a direct product of simple groups $\prod_b \Alt_m$, $G_{v}^n|_{\calN(\calS_n(v,a))}$ is itself a product  of some of the factors of $\prod_b \Alt_m$.
    In addition, every block $b$ contains a vertex  $x\in \calS_n(v,a)$, and from \Cref{local action in pf is alt} we have $G_{v}^n|_{\calN_1(\calS_n(v,a))}|{_{\calB_1(x)}}=\Alt_m$, 
    therefore $G_{v}^n|_{\calN(\calS_n(v,a))}=\prod_b \Alt_m=U^n_v|_{\calN_1(\calS_n(v,a))}$. 
\end{proof}



\begin{claim}
    $$G^n_v|_{\calB_{n+1}(v)}=U^n_v|_{\calB_{n+1}(v)}.$$ 
\end{claim}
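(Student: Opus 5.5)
The plan is to combine the sector-wise equality of \Cref{sector is universal} with the product decomposition of \Cref{U_v^n is a product}, using a component argument in the spirit of \Cref{local action options}.

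By \Cref{U_v^n is a product}(2) the restriction map identifies
\[
U^n_v|_{\calB_{n+1}(v)} \simeq \prod_{b\in Bl_n(v)} U^n_v|_{\calN_1(b)} \simeq \prod_{b\in Bl_n(v)}\Alt_m .
\]
By \Cref{blockes are not diveded by sectors}, every block lies in a unique sector $\calS_n(v,a)$. Grouping the factors by sectors therefore gives
\[
U^n_v|_{\calB_{n+1}(v)}\simeq \prod_{a\in\calS_1(v)} U^n_v|_{\calN_1(\calS_n(v,a))}.
\]
Set $H=G^n_v|_{\calB_{n+1}(v)}$, viewed as a subgroup of this product; it suffices to show $H$ is everything. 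By \Cref{sector is universal}, for each $a$ the projection of $H$ to the $a$-th sector factor is onto, so every simple factor $\Alt_m$ (indexed by a block $b$) is hit by the projection $\pi_b(H)$.

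The main step is to show that $H$ actually contains each factor $\Alt_m^{(b)}$, not merely surjects onto it. I would first try to exhibit $\Alt_m^{(b)}\le H$ directly as a commutator. Fix a block $b$ in the sector of $a$, and pick a second block $b'$ in a different sector $a'\neq a$. Since $n\ge2$, sectors are nonempty for every $a$, and $|\calS_1(v)|\ge 2$.

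For each $a$, \Cref{sector is universal} supplies elements of $H$ realising any element of the sector-$a$ product, but with uncontrolled behaviour on the other sectors. Take $g,h\in H$ whose $b$-coordinates are arbitrary $\sigma,\tau$. Their commutators have $b$-coordinate $[\sigma,\tau]$, but the other coordinates are uncontrolled, so this alone does not isolate the $b$-factor.

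The cleaner route is via normality. Since $G^n_v$ is normal in $G^{n-1}_a$ (as used in \Cref{sector is universal}), $H$ is normalized by $G^{n-1}_a|_{\calB_{n+1}(v)}$. I then want an element of this normalizer acting as an arbitrary $\sigma$ on block $b$ and trivially on a chosen block $b'$ of another sector. I expect this to come from \Cref{auxiliary claim for components in universal} together with the induction hypothesis, that is, from $G^{n-1}_a$ agreeing with $U^{n-1}_a$ on $\calB_n(a)$. The caveat is that $b'$ may not lie in $\calB_n(a)$. If that fails, I fall back on components.

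The fallback is a component analysis. $H$ is a subgroup of $\prod_b\Alt_m$ projecting onto each factor, so by Goursat-type reasoning (\Cref{properties of components}), $H$ is a product of diagonal subgroups over a partition of the blocks. Each part of the partition is the set $B_C$ of blocks on which a component $C$ of $H$ is nontrivial. \Cref{sector is universal} shows that within a single sector $H$ is the full product, so no two blocks in the same sector can share a diagonal.

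It remains to rule out diagonals linking blocks $b\ne b'$ lying in different sectors. Here I use \Cref{auxiliary claim for components in universal}, lifted to $G$ through the induction hypothesis. Since $b,b'$ lie in different sectors, it yields $g\in G^{n-1}_v$ fixing a vertex $x\in b'$ but moving $y\in b$, or vice versa. Conjugation by $g$ normalizes $H$, because $G^n_v\normalin G^{n-1}_v$, and hence permutes its components. It therefore sends the diagonal component $C$ with $B_C\ni b,b'$ to a component whose support contains $b'$ and $g(b)\neq b$. Components with overlapping supports must coincide, so $g(b)\in B_C$. Iterating this, and using that $G^{n-1}_v$ also acts, I aim to show that $B_C$ would have to contain two blocks of one sector, which contradicts the within-sector fullness established above.

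I expect this last step to be the main obstacle: making sure that the moved block $g(b)$ lands in the same sector as $b$, or as some other block already in $B_C$. Once every component is a single factor, $H$ contains all factors and hence equals $U^n_v|_{\calB_{n+1}(v)}$.
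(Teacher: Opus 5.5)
Your plan has the right ingredients, and the same two key inputs as the paper. First, \Cref{sector is universal} forbids a component from being supported on two blocks of the same sector. Second, \Cref{auxiliary claim for components in universal}, lifted through the induction hypothesis, gives $g\in G^{n-1}_v$ that fixes one block and moves another block in a different sector. But you stop exactly at the decisive point: you propose to ``iterate'' and flag as the main obstacle whether $g(b)$ lands in the sector of $b$. That is the missing idea, and it is a one-liner.

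Since $n\ge 2$, every $g\in G^{n-1}_v$ fixes $\calS_1(v)$ pointwise. An automorphism fixing $v$ maps partly free vertices of $\calS_n(v)$ to partly free ones and carries $[v\sea x]$ to $[v\sea gx]$. Hence $g(\calS_n(v,a))=\calS_n(v,a)$ for every $a\in\calS_1(v)$. Now take $g(b')=b'$ (since $g$ fixes a vertex of $b'$) and $g(b)\ne b$. The latter holds as blocks: $g$ fixes the neighbours of $b$ in $\calS_{n-1}(v)$, so it cannot swap the two vertices of a size-two block. The component $gCg^{-1}$ has support $g(B_C)\ni b',g(b)$. It meets $B_C$ at $b'$, so $gCg^{-1}=C$ and $g(b)\in B_C$. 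Then $b\ne g(b)$ are two blocks of $B_C$ in the same sector, contradicting within-sector fullness. No iteration is needed; this is precisely the paper's final paragraph.

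With that step supplied, your argument is correct, and its group-theoretic framing is slightly more economical than the paper's. Because $H$ surjects onto every $S_b\simeq\Alt_m$, it is a subdirect product of nonabelian simple groups. Hence it is a direct product of full diagonal subgroups over a partition of $Bl_n(v)$. You should state or prove this standard lemma explicitly, since it does not follow literally from \Cref{properties of components}.

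This lemma tells you at once that $H$ is generated by its components, whose supports are pairwise disjoint, so it suffices to show each part is a singleton. The paper avoids the lemma and instead runs an induction along the series $1=\bbG_0\normalin\bbG_1\normalin\cdots\normalin\bbG_r=\bbG$, where each step is generated by the components of the previous quotient, and it applies the same sector-and-separation argument in each quotient. As the paper's footnote notes, a posteriori $r=1$, which is exactly what your subdirect-product observation gives directly.
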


\begin{proof} 
To reduce notation, let us denote $\bbG = G^n_v|_{\calB_{n+1}}$ and $\bbU = U^n_v|_{\calB_{n+1}(v)}$.
We want to show $\bbG = \bbU$.

By assumption we know $\bbG \le \bbU$. 
By \Cref{U_v^n is a product}, $\bbU$ is a product of $\prod_{b\in Bl_n(v)}\Alt_m$. Let us denote by $B = Bl_n(v)$, and for each $b\in B$ let $S_b\le \bbU$ be the factor $\Alt_m$ corresponding to the block $b$. Let $\pi_b:\bbU \to S_b$ be the projection.

For a component $C\le \bbG$, by \Cref{properties of components}.2, $\pi_b(C) = 1 \text{ or } S_b$. 
Let $B_C = \{b\in B \;|\; \pi_b(C)\simeq S_b\}$.
    If $C\ne C'$ are distinct components then by \Cref{properties of components}.3 $[C,C']=1$, $[\pi_b(C),\pi_b(C')]=1$ and so at most one of them can be non-trivial. Therefore $B_C\cap B_{C'}=\emptyset$.
Note that $B_C=\{b\}$ if and only if $C = S_b$.

We want to show that $\bbG = \bbU = \prod_{b\in B}S_b$.
It suffices to show that $\bbG$ is a product $\prod_{b\in B'} S_b$ for some $B'\subseteq B$, since by \Cref{local action in pf is alt}, $\pi_b(\bbG)\ne 1$ for all $b\in B$, and so we must have $B' = B$, and $\bbG = \bbU$.

Consider the sequence of subgroups of $\bbG$ defined inductively by: 
\begin{itemize}
    \item $\bbG_0=1$, and  
    \item $\bbG_{i+1}/\bbG_i$ is the subgroup of $\bbG/ \bbG_i$ generated by the components of $\bbG/\bbG_i$.
\end{itemize}
Then, $\bbG_i \normalin \bbG$ is characteristic and the series terminates in finitely many steps:
$$1=\bbG_0\normalin \bbG_1 \normalin \dots \normalin \bbG_r= \bbG$$
The quotient $\bbG_{i+1}/\bbG_{i}$ is the direct product of the components of $\bbG/\bbG_i$.

By induction on $i$ we will prove that there exists a subset $B_i \subseteq B$ such that $\bbG_i = \prod_{b\in B_i}S_b$. In particular, $\bbG = \bbG_r$ is such a product.
\footnote{\emph{A posteriori}, we must have $r=1$.}

    The base case $i=0$ holds trivially for $B_0=\emptyset$.

    Assume $\bbG_i = \prod_{b\in B_i}S_i$. 
    Note that $\bbG_i$ is normal in $\bbG$ and $\bbU$. Denoting $\bar\bbG = \bbG/\bbG_i$ and $\bar \bbU = \bbU/\bbG_i$ we have  $$\bar \bbG \le \bar\bbU \simeq \prod_{b\in B-B_i}S_b$$
    
    For each component $\bar C\le \bar \bbG$, define $\bar B_{\bar C}=\{b\in B-B_i\;|\;\pi_b(\bar C)= S_b\}.$
    Then, as above, if $\bar C\ne \bar C'$ are distinct components of $\bar \bbG$ then $\bar B_{\bar C}\cap \bar B_{\bar C'}=\emptyset$.
    Let $$B_{i+1} := B_i \sqcup \bigsqcup_{\bar C} \bar B_{\bar C}$$ where the union is taken over all components $\bar C\le \bar \bbG$.

    To show that $\bbG_{i+1} = \prod _{b\in B_{i+1}}S_b$ it suffices to prove that $\bbG_{i+1}/\bbG_i = \prod _{b\in B_{i+1}-B_i}S_b$. For which it suffices to prove that $\bar B_{\bar C}$ is a singleton for every component $\bar C \le \bar \bbG$.

    Let $a$ be a neighbour of $v$. By \Cref{sector is universal}, we have $$\bbG|_{\calN_1(\calS_n(v,a))} = \bbU|_{\calN_1(\calS_n(v,a))} = \prod_{b\in B\;:\;b\subseteq \calS_n(v,a)} S_b.$$ Further restricting to those blocks which are not in $B_i$ we get
    $$\bar \bbG|_{\calN_1(\calS_n(v,a))} = \bar \bbU|_{\calN_1(\calS_n(v,a))} = \prod_{b\in B-B_i\;:\;b\subseteq \calS_n(v,a)} S_b$$
    If $\bar C \le \bar \bbG$ is a component, then under the restriction above it is either trivial or $S_b$ of some block $ b\in B-B_i$ in the sector $\calS_n(v,a)$.
    This shows that $\bar B_{\bar C}$ can have at most one block per sector, i.e., 
    \begin{equation}\label{components in sectors}
        |\bar B_{\bar C} \cap \{b\in B - B_i\;|\;b\subseteq \calS_n(v,a)\}|\le 1
    \end{equation}

    Assume for the sake of contradiction that $b_1,b_2\in \bar B_{\bar C}$ are distinct blocks. Then, by the above, $b_1,b_2$ belong to distinct sectors. By 
    \Cref{auxiliary claim for components in universal}, there exists an element $g'\in U^{n-1}_v$ such that, without loss of generality, $g'(b_1)=b_1$ and $g'(b_2)\ne b_2$. 
    By the induction hypothesis, there exists an element $g\in G_v$ such that $g|_{\calB_n(v)} = g'|_{\calB_n(v)}$ and so $g(b_1)=b_1$ and $g(b_2)\ne b_2$ as well.
    Since $g\in G^{n-1}_v$ (and $n\ge 2$), the blocks $g(b_2),b_2$ are in the same sector. By \eqref{components in sectors}, we thus have $g(b_2)\notin \bar B_{\bar C}$. 
    
    Finally, $G_v$ acts on $\bar\bbG$ by conjugation. The conjugate $\bar C'=g \bar C g\ii$ is a component of $\bar \bbG$, and $\bar B_{\bar C'} = g\bar B_{\bar C}.$ 
    Since $\bar B_{\bar C}\ni b_1\in \bar B_{\bar C'}$ but $\bar B_{\bar C}\not\ni g(b_2)\in \bar B_{\bar C'}$ the sets $\bar B_{\bar C},\bar B_{\bar C'}$ are neither disjoint nor equal. This is a contradiction.
 \end{proof}

From the induction hypothesis, and the Five lemma, 
by the last claim we get the desired equality
    $$G_v|_{\calB_{n+1}(v)}=U_v|_{\calB_{n+1}(v)}.$$
    This finishes the proof of \Cref{characterization of dense subgroups}.
\end{proof}


\section{Constructing lattices in products}\label{construction}

Recall the following definition:

\begin{definition}
A \emph{BMW group of degree $(m,n)$} is a group $\Gamma\le \Aut(T_m)\times \Aut(T_n)$ that acts simply transitively on the vertex set of the product $T_m\times T_n$ of the $m$-regular tree and the $n$-regular tree.
A BMW group is \emph{involutive} if it is generated by involutions that reflect along edges incident to a fixed vertex.
\end{definition}

For further background on such BMW groups, we refer the reader to \cite[Section 4]{caprace2019finite} and \cite{lazarovich2022counting}. 

In this section we prove the following: 

\begin{theorem}\label{building the lattice}
    For every involutive BMW group $\Gamma$ of degree $(m,n)$ with alternating local actions, there exist $c\ge m$, $d\ge n$ and a lattice $\Lambda \le \Aut(T_{c}) \times \Aut(X_{O_d})$ such that:
    \begin{itemize}
        \item The group $\Gamma$ is a subgroup of a hyperplane stabilizer of $\Lambda$. In particular, if $\Gamma$ is irreducible then so is $\Lambda$.
        \item $\Lambda$ acts on $T_{c} \times X_{O_d}$ simply transitively on vertices.
        \item The local actions of $\Lambda$ are alternating.
    \end{itemize}
\end{theorem}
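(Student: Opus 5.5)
The plan is to build $\Lambda$ as the fundamental group of a complex of groups, or more concretely as a group given by a \emph{square-complex presentation}, generalising the construction of involutive BMW groups. An involutive BMW group $\Gamma$ of degree $(m,n)$ is determined by a set $A=\{a_1,\dots,a_m\}$ of involutions for the first factor, a set $B=\{b_1,\dots,b_n\}$ of involutions for the second, and a set $R$ of square relations $a b = b' a'$. These are required to satisfy the ``VH-datum'' condition: every pair $(a,b)$ appears exactly once as the left-bottom corner of a square. The links of the resulting one-vertex square complex are then complete bipartite graphs $K_{m,n}$. For $\Lambda$ we keep the tree direction, with generators $a_1,\dots,a_c$ where we enlarge $A$ to $c\ge m$ involutions. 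In the Davis direction we use generators indexed by $V(O_d)$, i.e.\ by the $(d-1)$-subsets of $[2d-1]$, with $d\ge n$. We also add the right-angled commutation relations $[s,t]=1$ for disjoint subsets $s,t$. The resulting presentation complex, after filling cubes, has vertex link equal to the join of $c$ points with the flag complex of $O_d$. Its universal cover is therefore $T_c\times X_{O_d}$, by the link condition and the CAT(0) criterion of Gromov. So $\Lambda$ acts simply transitively on vertices, and it is a uniform lattice.

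The key steps, in order, are as follows.
\begin{enumerate}
\item Choose an injection $\beta\colon B\to V(O_d)$ whose image is an \emph{independent} set, i.e.\ a family of pairwise intersecting $(d-1)$-subsets. For example, take all subsets containing a fixed element $1$, which is possible once $\binom{2d-2}{d-2}\ge n$. This guarantees that no extra commutation relations are imposed among the $b_j$. The subgroup generated by $\beta(B)$ in $W_{O_d}$ is then free-product-like, and the span of these edges in $X_{O_d}$ is a subtree inside a single hyperplane-type subcomplex.
\item Extend the VH-datum. For each new generator $s\in V(O_d)\setminus\beta(B)$ and each $a\in A$, and for each new $a\in\{a_{m+1},\dots,a_c\}$ and each Davis generator, we must prescribe squares $as=s'a'$ so that every corner pair occurs exactly once. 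The simplest choice is the commuting one, $as=sa$. This keeps the link a full join. The first factor then becomes the VH condition of a product on those generators.
\item Check the structural claims. The old squares $R$ sit in $\Lambda$, and $\Gamma$ embeds as the stabiliser of the product of $T_m$ with the tree spanned by $\beta(B)$. That subcomplex is a convex subcomplex contained in a hyperplane stabiliser after passing to a suitable hyperplane of $T_c$. Irreducibility of $\Gamma$ then passes to $\Lambda$, since a finite-index product subgroup of $\Lambda$ would restrict to one of $\Gamma$.
\end{enumerate}

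The main obstacle is step 4, making the local actions alternating. The local action at a vertex of $X_{O_d}$ is read off from how the squares $as=s'a'$ permute the labels $s\mapsto s'$. If we commute trivially with the new generators, each $a$ acts on $V(O_d)$ by a permutation that fixes everything outside $\beta(B)$. This permutation need not come from $\Sym_{2d-1}$ at all, and then the quotient complex is not even a complex of $O_d$-type. So the choice in step 2 must be made so that for each $a$ the map $s\mapsto s'$ is induced by an element $\sigma_a\in\Alt_{2d-1}$, with $\sigma_a$ extending the given $\Alt_n$-action on $B$. The action on $B$ is alternating by hypothesis.

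I would try to realise $\Alt_n$ inside $\Alt_{2d-1}$ acting on the chosen family $\beta(B)$ equivariantly. This means picking $d$ large and encoding $B$ as subsets $\{1\}\cup \tau_j$, where the $\tau_j$ form an $\Alt_n$-orbit of $(d-2)$-subsets of $[2d-1]\setminus\{1\}$, for instance $n\le 2d-2$ and $\tau_j$ built from the point $j$. We then extend each $\sigma_a$ to all of $[2d-1]$, adjusting parity with a transposition on unused points. The involutive condition $\sigma_{a}^{-1}=\sigma_{a'}$ for the paired square must also be respected. Symmetrically, in the tree factor, $c$ must be enlarged so that the Davis generators act on $A\cup\{\text{new}\}$ by even permutations. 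Enlarging $c$ and $d$ gives the freedom to fix parity; this is where the inequalities $c\ge m$, $d\ge n$ enter.

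Finally, I would verify transitivity of the resulting local actions, which is needed for them to equal the full alternating groups, by adding enough new generators so that the generated permutation groups are primitive and contain a $3$-cycle.
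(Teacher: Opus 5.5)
\paragraph{The main gap.} The unproved step is your sentence ``after filling cubes, the vertex link is $Z*O_d$''. This is where almost all of the work lies, and it fails for generic choices of squares.

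Write your squares as $as=\sigma_a(s)\,\tau_s(a)$, with $\tau_s$ a permutation of the tree labels. Each corner must lie in exactly one square, which requires $\tau_{\sigma_a(s)}(a)=\tau_s(a)$ and $\sigma_{\tau_s(a)}(s)=\sigma_a(s)$. Beyond that, the link is flag only if every triangle $\{a,s,t\}$ with $s\sim t$ in $O_d$ bounds a $3$-cube. Reading off the six faces, this requires:
\begin{itemize}
\item $\sigma_a(s)\sim\sigma_a(t)$;
\item $\sigma_{\tau_s(a)}(t)=\sigma_a(t)$, and symmetrically in $s,t$;
\item $\tau_s\tau_t(a)=\tau_t\tau_s(a)$.
\end{itemize}
You noticed the first condition ($\sigma_a\in\Aut(O_d)$) but not the others. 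Ranging over all neighbours $t$ of $s$, the second says that $\sigma_a$ and $\sigma_{\tau_s(a)}$ agree pointwise on $[2d-1]\setminus s$. These are the paper's interlacing axioms (D3)--(D5), with the commutation condition strengthened to ``for adjacent $s,t$, at most one of $\tau_s,\tau_t$ is nontrivial''.

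These axioms, not parity, are the real obstacle. To make the tree action transitive, some $\tau_s$ must carry new tree labels onto old ones. Each such $s$ pins the corresponding $\sigma$'s to each other outside the $(d-1)$-set $s$, yet the $\sigma$'s must still generate $\Alt_{2d-1}$.

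The paper's $n$-scaffolding is exactly a solution of this system:
\begin{itemize}
\item new tree labels $y_1,\dots,y_k$ with $\sigma_{y_i}=\upsilon_i$, double transpositions generating $\Alt_{2d-1}$;
\item a cyclic chain of vertices $C_i\supseteq\bigcup_{j=i}^{i+3}\supp(\upsilon_j)$ with $\tau_{C_i}=(y_i\,y_{i+1})(y_{i+2}\,y_{i+3})$;
\item a bridge vertex $D_0$ (the paper's $B$) containing the supports of all $\sigma_{a_i}$ and of $\upsilon_1$, with $\tau_{D_0}=(a_1\,y_1)(a_2\,a_3)$;
\item all vertices with nontrivial $\tau$ forming an independent set.
\end{itemize}
Nothing in your plan plays this role. ``Adjusting parity by a transposition on unused points'' and ``adding generators until primitive with a $3$-cycle'' ignore these constraints and generically violate them. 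Parity adjustment is in any case unnecessary: an even permutation of $[n]$ pushed into $\Sym_{2d-1}$ is already even. Your condition $\sigma_a^{-1}=\sigma_{a'}$, if meant as an identity of permutations, is far too strong. Since the $\sigma_a$ are involutions, once the tree action is transitive it forces all $\sigma_a$ to be equal, and the Davis local action then has order at most $2$.

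\paragraph{Further points.} Gromov's link condition only shows the universal cover is CAT(0). To identify it with $T_c\times X_{O_d}$ you need Lazarovich's uniqueness theorem: $X_L$ is the unique CAT(0) cube complex with all links $L$ iff $L$ is superstar-transitive. You must apply it to $Z*O_d$, after checking that this join is superstar-transitive (the paper derives this from superstar-transitivity of $O_d$).

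The hyperplane whose stabiliser contains $\Gamma$ must be of the form $T_c\times\calH$, with $\calH\subset X_{O_d}$ dual to a common neighbour of the images $\beta(b_j)$. The paper takes $\beta(b_j)=[2d-1]\setminus(A'\cup\{\bar j\})$, the neighbours of a fixed vertex $A'$. This gives the common neighbour and also makes the $\Alt_n$-action extend to $\Sym_{2d-1}$ via $j\mapsto\bar j$. A hyperplane $\{p\}\times X_{O_d}$, with $p$ the midpoint of an edge of $T_c$, cannot work: its stabiliser preserves that edge, while $\Gamma$ is vertex-transitive on a subtree $T_m$. Also, an arbitrary family of subsets through $1$ need not have a common neighbour.

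Finally, irreducibility should be deduced from non-discreteness of the projections. Intersecting $\Gamma$ with a finite-index product subgroup of $\Lambda$ does not in general yield a product subgroup of $\Gamma$.
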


Here is an outline of this section:
In \S\ref{uniqueness of regular}, using the uniqueness theorem of \cite{lazarovich2018regular}, we will show that $T_c \times X_{O_d}$ is the unique CAT(0) cube complex all of whose links are isomorphic to the join of a set of $c$ points and the graph $O_d$. Using this, in \S\ref{building a lattice in the product}, we will describe a combinatorial structure, called an ``interlacing pair'', which defines a group acting simply transitively on the vertices of $T_c \times X_{O_d}$.
In \S\ref{lattice containing a BMW}, given an involutive BMW group $\Gamma$ as in the theorem, we will construct a specific interlacing pair that ``contains'' the defining data of $\Gamma$. 
Finally, in \S\ref{proof of building the lattice}, we will finish the proof of \Cref{building the lattice} by showing that the group corresponding to the interlacing pair constructed in \S\ref{lattice containing a BMW} has the desired properties.


\subsection{Uniqueness of regular CAT(0) cube complexes}
\label{uniqueness of regular}

Let us recall the definition for a superstar-transitive simplicial complex, and the uniqueness result of \cite{lazarovich2018regular}. 

\begin{definition} \label{def superstar}
A simplicial complex $L$ is \textit{superstar-transitive} if for any two simplicies $\sigma,\sigma'$, and an isomorphism $\phi:\calN_1(\sigma)\to \calN_1(\sigma')$ with $\phi(\sigma)=\sigma'$, there exists an automorphism $\Phi:L\to L$ such that $\Phi|_{\calN_1(\sigma)}=\phi$.
\end{definition}

\begin{theorem}\cite[Theorem 1.2]{lazarovich2018regular}\label{uniquness of CAT(0)}
    Let $L$ be a finite flag simplicial complex. 
    The associated Davis complex $X_{L}$ is the unique CAT(0) cube complex whose vertex links are all isomorphic to $L$
    if and only if $L$ is superstar-transitive.
\end{theorem}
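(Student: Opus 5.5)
The plan is to prove the two directions separately. In both, the key geometric input is the following fact about a CAT(0) cube complex $X$ with basepoint $v$. For a vertex $x\in\calS_n(v)$, the part of $\Lk(x,X)$ coming from cubes in $\calB_n(v)$ equals $\calN_1(\sigma_x)$. Here $\sigma_x$ is the simplex of the last cube $C_n$ of the normal cube path $[v\nea x]$, i.e.\ the cube spanned by the edges from $x$ back toward $v$. This generalises the free/partly free dichotomy and \Cref{normal paths and pf} to arbitrary dimension. I will prove it first, using that the hyperplanes separating $x$ from $\calB_{n-1}(v)$ are exactly those dual to $\sigma_x$, and that they pairwise cross.

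\textbf{Superstar-transitive $\Rightarrow$ uniqueness.} Let $Y$ be a CAT(0) cube complex all of whose links are isomorphic to $L$. I will build a combinatorial map $F:X_L\to Y$ by induction on balls $\calB_n(v)\subseteq X_L$.
\begin{enumerate}
\item Start by sending $\calB_1(v)$ isomorphically onto a star in $Y$, using a fixed isomorphism of links.
\item For the inductive step, assume $F$ is defined on $\calB_n(v)$ and is a local isomorphism on the interior vertices. For each $x\in\calS_n(v)$, $F$ already induces an isomorphism from $\calN_1(\sigma_x)\subseteq\Lk(x,X_L)$ onto a subcomplex $\calN_1(\tau)\subseteq\Lk(F(x),Y)$.
\item Since both links are isomorphic to $L$, superstar-transitivity extends this isomorphism to a full link isomorphism. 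That defines $F$ on the cubes at $x$ reaching into $\calS_{n+1}(v)$.
\end{enumerate}

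The main obstacle is consistency in step 3. A vertex $w\in\calS_{n+1}(v)$, or a cube meeting $\calS_{n+1}(v)$, may be reached from several vertices of $\calS_n(v)$, and the separate extensions must agree on it. I would handle this by showing that every cube $Q$ meeting $\calS_{n+1}(v)$ contains a unique vertex closest to $v$, namely its gate/projection onto $\calB_n(v)$. I then define $F$ on $Q$ using only the extension chosen at that vertex.

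It then remains to check that the resulting map is still a local isomorphism at the vertices of $\calS_n(v)$. The potential problem is cubes of $Y$ that are glued along faces lying in $\calB_n(v)$. Here I would use flagness of the links in $Y$ to argue that two extensions agreeing on the common face force the images of adjacent cubes to fit together, since the image square is forced to be filled. Taking the union over $n$ yields a cubical map $X_L\to Y$ that is a local isomorphism everywhere. A local isomorphism between CAT(0) cube complexes is a covering map, and $Y$ is simply connected, so it is an isomorphism.

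\textbf{Uniqueness $\Rightarrow$ superstar-transitive.} I argue by contrapositive. Take a simplex $\sigma$ and an isomorphism $\phi:\calN_1(\sigma)\to\calN_1(\sigma')$ that does not extend to an automorphism of $L$. Cut $X_L$ along a cube $C$ of type $\sigma$, more precisely along the hyperplanes dual to it. Reglue one side via the local identification given by $\phi$ at the corner vertex, and propagate the same twist equivariantly over all translates so that the links remain $L$. Then check that the glued complex is simply connected with flag links. Finally, show it is not isomorphic to $X_L$: in $X_L$, every partial isomorphism of the form $\calN_1(\sigma)\to\calN_1(\sigma')$ that is realised by the geometry at a vertex extends to an automorphism of the link, because $X_L$ carries the automorphisms $\Phi_{v,\phi}$ of \Cref{link to complex automorphism} together with the $W_L$-action. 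I expect this direction to be more delicate to write carefully, but the inductive extension argument above is the core of the theorem.
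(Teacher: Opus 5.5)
\textbf{The ``if'' direction.} Your overall strategy is the right one: extend ball by ball using superstar-transitivity, then finish with the covering argument. However, the key lemma is false as stated, and the simultaneous extension built on it does not glue.

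\emph{Free vertices.} Already in the two-dimensional setting of this paper, take $x\in\calS_n(v)$ free. By \Cref{normal paths and pf} the last cube of $[v\nea x]$ is the unique square of $\calB_n(v)$ at $x$, so $\sigma_x$ is an edge. But $\Lk(x,\calB_n(v))$ is just that edge, not $\calN_1(\sigma_x)$; only at partly free vertices does the ball itself exhibit a superstar. The missing part of $\calN_1(\sigma_x)$ consists of the cubes through the edges joining $x$ to its partly free neighbours in $\calS_n(v)$. These only become determined after you have extended at those neighbours; this is the mechanism behind part (3) of \Cref{U_v^n is a product}.

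\emph{Blocks of size two.} Take a block $\{x,y\}$ of size two (these occur in $X_{O_d}$). The squares through $xy$, other than the one descending to $\calS_{n-1}(v)$, have two vertices in $\calS_n(v)$ and two in $\calS_{n+1}(v)$. So your claim that every cube meeting $\calS_{n+1}(v)$ has a unique vertex closest to $v$ fails. Choosing an $\ell_1$-gate does not help. If $F$ is defined on these squares by the extension at $x$, while the star of $y$ is extended independently, you get two unrelated bijections between the squares through $xy$ and those through $F(x)F(y)$. Flagness of links in $Y$ says nothing about whether they agree.

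\emph{What is missing.} The vertices of $\calS_n(v)$ must be treated one at a time in a suitable order (in dimension two: partly free vertices block by block, then free vertices). Each extension must extend everything already forced at that vertex. You must then prove that at that moment the forced part of the link is \emph{exactly} $\calN_1(\sigma)$ for some simplex $\sigma$ (for instance $\calN_1$ of an edge at $y$, and at free vertices). That is where superstar-transitivity for positive-dimensional simplices genuinely enters.

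\textbf{The ``only if'' direction.} This is not yet an argument. Regluing along hyperplanes needs an automorphism of an entire hyperplane, and you do not say how a non-extendable $\phi:\calN_1(\sigma)\to\calN_1(\sigma')$ yields one. Regluing along a hyperplane by \emph{any} automorphism of it leaves all vertex links unchanged, so the link condition is not where the difficulty lies.

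The difficulty is showing that the new complex is not isomorphic to $X_L$. An abstract isomorphism need not respect the location of your twist. The property of $X_L$ you invoke (``partial isomorphisms realised by the geometry extend'') is never defined. The only natural way to compare superstars at different vertices of $X_L$ is through the labellings $c_x$, which are not intrinsic, so as stated it is not an isomorphism invariant. You would need a genuinely intrinsic invariant, or a different construction, that detects the failure of superstar-transitivity.

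For context, the paper does not prove this theorem at all: it quotes \cite{lazarovich2018regular}. Only the ``if'' direction is used, in \Cref{the lattice Lambda}, to identify $Y$ with $X_Z\times X_L$.
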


\begin{proposition} \cite[Proposition 5.3]{lazarovich2018regular}\label{O_d is sst}
    For every $d\in \bbN$, the complex $L=O_d$ is superstar-transitive. 
\end{proposition}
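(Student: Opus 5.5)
The plan is to verify \Cref{def superstar} directly, using the description $\Aut(O_d)=\Sym_{2d-1}$ acting on $(d-1)$-subsets of $[2d-1]$. Since $O_d$ is triangle-free for $d\ge 2$ (its girth is at least $5$ once $d\ge 3$, and $O_1,O_2$ are a point and a triangle, which can be checked by hand), its flag completion is just the graph. So the only simplices to treat are vertices and edges (and the empty simplex, for which $\calN_1$ is all of $L$ and there is nothing to prove). In each case I will show that an isomorphism $\phi:\calN_1(\sigma)\to\calN_1(\sigma')$ with $\phi(\sigma)=\sigma'$ is determined by a bijection of ground-set elements. That bijection extends to a permutation $\pi\in\Sym_{2d-1}$, and $\pi$ agrees with $\phi$ on the vertices of $\calN_1(\sigma)$. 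A simplicial map is determined by its action on vertices, so $\pi|_{\calN_1(\sigma)}=\phi$.

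\textbf{Vertex case.} Let $\sigma=A$ and $\sigma'=A'$ be vertices. The neighbours of $A$ are the $d$ sets $([2d-1]\setminus A)\setminus\{i\}$ for $i\in[2d-1]\setminus A$. This gives a canonical bijection $i\mapsto N_i$ between $[2d-1]\setminus A$ and the neighbours of $A$, and similarly for $A'$. Since $\phi(A)=A'$, $\phi$ permutes neighbours and so induces a bijection $\beta:[2d-1]\setminus A\to[2d-1]\setminus A'$. I would extend $\beta$ by an arbitrary bijection $A\to A'$ to obtain $\pi\in\Sym_{2d-1}$. Then $\pi(A)=A'$ and $\pi(N_i)=N'_{\beta(i)}=\phi(N_i)$, as required.

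\textbf{Edge case.} Let $\sigma=\{B,C\}$ with $B\cap C=\emptyset$. Then $B\sqcup C\sqcup\{k\}=[2d-1]$ for a unique element $k$. The neighbours of $B$ other than $C$ are $(C\cup\{k\})\setminus\{c\}$ for $c\in C$. The neighbours of $C$ other than $B$ are $(B\cup\{k\})\setminus\{b\}$ for $b\in B$. Up to composing with the swap, which the same argument handles after relabeling, we may assume $\phi(B)=B'$ and $\phi(C)=C'$. Then $\phi$ maps the neighbours of $B$ to those of $B'$ and the neighbours of $C$ to those of $C'$. Via the indexing above, this yields bijections $C\to C'$ and $B\to B'$. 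Completing them with $k\mapsto k'$ gives $\pi\in\Sym_{2d-1}$, and one checks immediately that $\pi$ agrees with $\phi$ on every vertex of $\calN_1(\sigma)$.

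The main point needing care is that the vertex-level agreement suffices. This holds because $\calN_1(\sigma)$ is a full subcomplex and $\pi$ is a graph automorphism, so $\pi$ restricted to it is the unique simplicial map with that vertex map. One also needs the indexing of neighbours by ground-set elements to be injective. That is clear, since distinct omitted elements give distinct sets.
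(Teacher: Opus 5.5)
Your argument is correct. The paper does not prove this statement; it imports it from \cite[Proposition 5.3]{lazarovich2018regular}. As the Discussion section notes, superstar-transitivity is established there for the Kneser complexes $K(nd+1,n)$, and $O_d=K(2(d-1)+1,d-1)$ is the one-dimensional member of that family.

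Your route is a direct, self-contained verification specific to the Odd graph. Its key observation is that each neighbour of a vertex $A$ is determined by a single omitted ground-set element. The same holds for each neighbour of an endpoint of an edge other than the opposite endpoint. Hence an isomorphism of $1$-neighbourhoods sending $\sigma$ to $\sigma'$ is literally a bijection of ground-set elements, which you complete to a permutation. This uses only the easy inclusion $\Sym_{2d-1}\le\Aut(O_d)$, not the full identification $\Aut(O_d)=\Sym_{2d-1}$.

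The argument also does not depend on how $\calN_1(\sigma)$ is read: as the induced subcomplex, or as the union of simplices meeting $\sigma$. In both readings the neighbours of $B$ inside $\calN_1(\sigma)$ are exactly its neighbours in $L$, and you compare the two maps on vertices only. What your approach does not give is the higher-dimensional Kneser case, where simplices are larger and the neighbourhood combinatorics is more involved; that generality is what the citation provides.

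One small slip: you state that $O_d$ is triangle-free for $d\ge 2$, but $O_2=K_3$. That claim, and the claim that the flag completion is the graph itself, should be stated for $d\ge 3$. Your parenthetical already treats $d\le 2$ separately, where $\calN_1(\sigma)$ is all of $L$, so nothing breaks.
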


The link  at every vertex in $T_c\times  X_{O_d}$ is the join $L'=Z*L$ 
of a set $Z$ of $c$ isolated vertices with the Odd graph $L=O_d$.
We recall that the join $L_1*L_2$ of two simplicial complexes $L_1,L_2$ is the complex whose vertex set is $V(L_1) \sqcup V(L_2)$ and all simplices of the form $\sigma_1\cup \sigma_2$ where where $\sigma_1,\sigma_2$ are a simplices in $L_1,L_2$ respectively. 
In the next proposition we show that $L'$ is superstar-transitive. 

\begin{proposition}\label{the join is superstar-transitive}
    Let $3\leq d\in \bbN$, 
    Let $Z=\{z_1,\ldots,z_c\}$ be the complex of $c$ isolated vertices, 
    and let $L=O_d$ be the Odd graph with parameter $d$.
    Then the join $L'=Z*L$ 
    is superstar-transitive
\end{proposition}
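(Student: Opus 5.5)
We need to show that $L'=Z*L$ is superstar-transitive. The plan is to first describe $\Aut(L')$, then classify simplices of $L'$ together with their $1$-neighbourhoods, and in each case extend a given isomorphism of neighbourhoods.

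\textbf{Step 1: the automorphism group.} Every automorphism of $L'$ preserves the decomposition $V(L')=Z\sqcup V(L)$. Indeed, a vertex of $Z$ is adjacent to all of $V(L)$, which has $\binom{2d-1}{d-1}$ elements. A vertex of $L$ is adjacent to $c$ vertices of $Z$ and $d$ vertices of $L$. Degrees alone distinguish these unless $c+d=\binom{2d-1}{d-1}$, so we use a more robust invariant. Two vertices of $Z$ have the same neighbourhood $V(L)$. Two distinct vertices of $L$ never have the same neighbourhood in $L$, since $O_d$ has girth $6\ge 5$ for $d\ge 4$, and for $d=3$ the Petersen graph has girth $5$. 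Also $L$ is triangle-free and $Z$ is independent. Hence $Z$ is exactly the set of vertices whose neighbourhood contains an edge, which forces $c$ to be compared with the other side; more cleanly, $Z$ is the unique maximal class of pairwise non-adjacent vertices with identical neighbourhoods. Therefore
\[
\Aut(L')=\Sym(Z)\times\Aut(L).
\]

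\textbf{Step 2: the cases.} A simplex of $L'$ has the form $\sigma=\sigma_Z\cup\sigma_L$, where $\sigma_Z$ is empty or a single point of $Z$, and $\sigma_L$ is empty, a vertex, or an edge of $L$. The neighbourhood is $\calN_1(\sigma)=\calN_1^{Z}(\sigma_Z)*\calN_1^{L}(\sigma_L)$, with the convention that $\calN_1$ of the empty simplex in one factor is the whole other factor. If $\phi:\calN_1(\sigma)\to\calN_1(\sigma')$ satisfies $\phi(\sigma)=\sigma'$, then by the same neighbourhood argument as in Step 1, $\phi$ maps $Z$-vertices to $Z$-vertices and $L$-vertices to $L$-vertices. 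So $\sigma,\sigma'$ have the same type and $\phi$ splits as $\phi_Z\times\phi_L$.

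\textbf{Step 3: extension.} We extend each factor separately.
\begin{itemize}
\item[] \emph{The $Z$-factor.} If $\sigma_Z\ne\emptyset$, then $\calN_1(\sigma)\cap Z=\sigma_Z$ is a single point, because $Z$ has no edges. We extend $\phi_Z$ by any bijection of $Z$. If $\sigma_Z=\emptyset$ and $\sigma_L\neq\emptyset$, then all of $Z$ lies in the neighbourhood and $\phi_Z$ is already a permutation of $Z$.
\item[] \emph{The $L$-factor.} If $\sigma_L\ne\emptyset$, we extend $\phi_L$ to an automorphism of $L$ using \Cref{O_d is sst}. If $\sigma_L=\emptyset$ (so $\sigma=\{z\}$), then $\calN_1(\sigma)\supseteq L$ and $\phi_L$ is already an automorphism of $L$.
\end{itemize}
The product of the two extensions is an automorphism of $L'$ extending $\phi$.

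\textbf{Main obstacle.} The delicate point is the type-preservation in Steps 1--2 inside the neighbourhood itself, in particular when $\sigma=\{z\}$ and $\calN_1(\sigma)$ is the cone $\{z\}*L$. There $\phi$ fixes $z$ as the cone point, so it restricts to an automorphism of $L$. For $\sigma$ a vertex of $L$, the $Z$-vertices in the neighbourhood are exactly the neighbours of $\sigma$ that are adjacent to all other neighbours of $\sigma$. No $L$-vertex has this property, since $L$ is triangle-free and $d\ge 2$. These local characterizations are what make the splitting $\phi=\phi_Z\times\phi_L$ rigorous.
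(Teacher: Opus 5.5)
Your proof has a genuine gap at the type-preservation step for $\sigma=\{u\}$ with $u\in V(L)$, and as written it cannot be repaired. Here $\calN_1(u)=\{u\}*Z*D_u$, where $D_u$ is the set of the $d$ neighbours of $u$ in $L$, so the neighbours of $u$ span the complete bipartite graph $Z*D_u\cong K_{c,d}$. Your criterion (``neighbours of $\sigma$ adjacent to all other neighbours of $\sigma$'') picks out $Z$ only when $c=1$. For $c\ge 2$ no vertex satisfies it, since distinct points of $Z$ are not adjacent. Inside $\calN_1(u)$ the sides $Z$ and $D_u$ can be told apart only by their cardinalities.

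When $c=d$ the proposition is in fact false. The isomorphism $\calN_1(u)\to\calN_1(u)$ fixing $u$ and exchanging $Z$ with $D_u$ has no extension, because every automorphism of $L'$ preserves $Z$: a vertex of $Z$ has degree $\binom{2d-1}{d-1}$, while a vertex of $L$ has degree $c+d=2d<\binom{2d-1}{d-1}$. So your Step~1 is correct globally, but its argument does not localize. (Its intermediate claim that $Z$ is the set of vertices whose neighbourhood contains an edge is also false, though the conclusion $\Aut(L')=\Sym(Z)\times\Aut(L)$ holds.)

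The paper's proof needs, and at exactly this point assumes, $c\neq d$; it writes ``we assume $d\ne m$'', meaning $|Z|\ne d$. This is harmless for the application, where $c=m+2d-1>d$. With $c\neq d$, $\phi$ must send the $c$-element side of $K_{c,d}$ to the $c$-element side. Superstar-transitivity of $L$ then finishes this case as you intended.

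There is also a smaller error in Step~2. Your formula $\calN_1(\sigma)=\calN_1^Z(\sigma_Z)*\calN_1^L(\sigma_L)$ describes the star of $\sigma$. In the paper, $\calN_1(\sigma)$ is the union of the $1$-neighbourhoods of the vertices of $\sigma$; this is why, for an edge $e$ of $O_d$, $\Fix_{\Aut(O_d)}(\calN_1(e))$ is trivial. For $\sigma=\{z\}\cup\tau$ with $\tau\neq\emptyset$ this gives
\[
\calN_1(\sigma)=\{z\}*L\cup Z*\calN_1(\tau,L),
\]
which contains every vertex of $L'$. Thus $\calN_1(\sigma)\cap Z=\sigma_Z$ is false. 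Extending by an arbitrary bijection of $Z$ may then disagree with $\phi$, and so may an arbitrary extension supplied by superstar-transitivity of $L$, which is not unique when $\tau$ is a vertex since $\Fix_{\Aut(O_d)}(\calN_1(A))=\Sym(A)$.

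The fix is to take $\Phi=\phi|_Z*\phi|_L$ once you know $\phi$ preserves $Z$. The paper obtains this cleanly by first reducing, via \Cref{reduction of superstar}, to the case $\sigma=\sigma'$ with $\phi|_\sigma=\id$. There $\phi(z)=z$ forces $\phi$ to map the neighbours $V(L)$ of $z$ to themselves, hence $Z$ to $Z$.
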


The following observation will be useful in the proof of the proposition.

\begin{lemma}\label{reduction of superstar}
    To verify that $L$ is superstar-transitive it suffices to be able to extend an isomorphism $\phi:\calN_1(\sigma) \to \calN_1(\sigma')$ to $\Aut(L)$ (as in \Cref{def superstar}) in two cases:
    \begin{enumerate}[label = (\arabic*)]
        \item $\sigma,\sigma'$ are vertices, and
        \item $\sigma=\sigma'$ are $n$-simplices and $\phi|_\sigma=\id$.
    \end{enumerate}
\end{lemma}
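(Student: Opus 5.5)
Given $\sigma,\sigma'$ and an isomorphism $\phi:\calN_1(\sigma)\to\calN_1(\sigma')$ with $\phi(\sigma)=\sigma'$, I will reduce to the two cases by composing $\phi$ with automorphisms of $L$ that are already known to extend. Write $\sigma=\{v_0,\dots,v_n\}$, and let $v_0'=\phi(v_0)\in\sigma'$. The restriction $\phi|_{\calN_1(v_0)}$ is an isomorphism $\calN_1(v_0)\to\calN_1(v_0')$. This holds because $\calN_1(v_0)\subseteq\calN_1(\sigma)$ is the full subcomplex spanned by the neighbours of $v_0$ together with $v_0$, and $\phi$ is an isomorphism of the full subcomplexes $\calN_1(\sigma)$, $\calN_1(\sigma')$. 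By case (1) there is $\Psi\in\Aut(L)$ with $\Psi|_{\calN_1(v_0)}=\phi|_{\calN_1(v_0)}$. Since $\sigma\subseteq\calN_1(v_0)$, we get $\Psi(\sigma)=\phi(\sigma)=\sigma'$ pointwise compatibly, that is, $\Psi|_\sigma=\phi|_\sigma$.

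Next set $\phi'=\Psi^{-1}\circ\phi:\calN_1(\sigma)\to\calN_1(\sigma)$. This is well defined because $\Psi^{-1}$ maps $\calN_1(\sigma')$ onto $\calN_1(\Psi^{-1}\sigma')=\calN_1(\sigma)$. It is an isomorphism with $\phi'|_\sigma=\id$, so case (2) applies and gives $\Phi'\in\Aut(L)$ with $\Phi'|_{\calN_1(\sigma)}=\phi'$. Then $\Phi=\Psi\circ\Phi'$ satisfies $\Phi|_{\calN_1(\sigma)}=\Psi\circ\phi'=\phi$, as required.

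The only point needing care is the meaning of $\calN_1$ for a simplex: I take it to be the full subcomplex spanned by all vertices adjacent to some vertex of $\sigma$. With this convention, $\calN_1(v_0)$ sits inside $\calN_1(\sigma)$ as a full subcomplex, so restrictions of isomorphisms are isomorphisms onto $\calN_1(\phi(v_0))$. This uses that the neighbours of $v_0$ in $L$ are the vertices adjacent to $v_0$ within $\calN_1(\sigma)$, and $\phi$ preserves adjacency. I expect this bookkeeping to be the only, mild, obstacle; the rest is formal composition.
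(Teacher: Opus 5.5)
Your proof is correct and is essentially the paper's argument: restrict $\phi$ to $\calN_1(v_0)$ for a vertex $v_0\in\sigma$, extend it to $\Psi$ by case (1), apply case (2) to $\Psi^{-1}\circ\phi$ (which fixes $\sigma$ pointwise), and take $\Phi=\Psi\circ\Phi'$. Your check that $\phi$ restricts to an isomorphism $\calN_1(v_0)\to\calN_1(\phi(v_0))$ uses $\calN_1$ as a full subcomplex containing all neighbours of $v_0$, and it makes explicit a point the paper leaves implicit.
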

\begin{proof}
    Assume (1) and (2) hold. Let $\sigma,\sigma', \phi$ be as in the assumptions of \Cref{def superstar}, let $v\in V(\sigma)$, and let $v'\in V(\sigma')$ be the image of $v$ under $\phi$. Define $\psi:\calN_1(v)\to \calN_1(v')$ to be the restriction of $\phi$ to $\calN_1(v)$, note that in particular, $\psi$ is defined on $\sigma$, and $\psi(\sigma)=\sigma'$. 
    By (1), there is an automorphism $\Psi:L\to L$, such that $\Psi|_{\calN_1(v)}=\psi$, in particular $\Psi(\sigma)=\sigma'$, and so $\Psi(\calN_1(\sigma))=\calN_1(\sigma')$. Now consider $\varphi:= \Psi^{-1}|_{\calN_1(\sigma')}\circ\phi: \calN_1(\sigma)\to \calN_1(\sigma)$, note that $\varphi|_{\sigma}=\id$. 
    By (2), there exists an automorphism $\Phi':L\to L$ such that $\Phi'|_{\calN_1(\sigma)}=\varphi$.  Take $\Phi:=\Psi \circ \Phi'$, this is clearly an automorphism $L\to L$, moreover $$\Phi|_{\calN_1(\sigma)}= \Psi|_{\calN_1(\sigma)} \circ \Phi'|_{\calN_1(\sigma)}= \Psi|_{\calN_1(\sigma)} \circ \varphi= \Psi|_{\calN_1(\sigma)} \circ \Psi^{-1}|_{\calN_1(\sigma')}\circ\phi= \phi. \qedhere$$ 
    \end{proof}

\begin{proof}[Proof of \Cref{the join is superstar-transitive}]
    By \Cref{reduction of superstar} it suffices to check \Cref{def superstar} in two cases (1) and (2) of the lemma.

    \textbf{(1)} Let us show that the condition for superstar-transitivity holds when $\sigma,\sigma'$ are vertices. Let $v,v'\in V(L')$ be two vertices. 
    There are two cases to consider $v\in Z$ and $v\in L$.
    By \Cref{O_d is sst}, $L$ is superstar-transitive, and clearly so is $Z$.
    
    If $v\in Z$, then $\calN_1(v,L')={\calN_1(v,Z)}*L=\{v\}*L$, this implies that also $v'\in Z$. Let $\phi:\calN_1(v) \to \calN_1(v')$ be an isomorphism with $\phi(v)=v'$. In particular $\phi$ induces an automorphism on $L$. Take for example  $\Phi:Z*L\to Z*L$ such that $\Phi|_{L}=\phi|_L$, 
    $\Phi(v)=v'$, $\Phi(v)=v'$ and for any other $v,v'\ne w\in V(Z)$, $\Phi(w)=w$.

    If $v\in L$, then $\calN_1(v)=Z*{\calN_1}(v,L)=\{v\}*Z*D$ where $D=\{a_1,\ldots a_d\}$ a set of $d$ isolated vertices, the neighbours of $v$ in $L$. We assume $d\ne m$, therefore $v'\in L$ as well. Let $\phi:\calN_1(v) \to \calN_1(v')$ be an isomorphism with $\phi(v)=v'$, $\phi$ induces an isomorphism on $Z$, and also, since $L$ is superstar-transitive, and $\phi|_{L}:{\calN_1}(v,L)\to {\calN_1}(v',L)$ is an isomorphism that sends $v$  to $v'$, there is an automorphism $\Phi':L\to L$ such that $\Phi'|_{{\calN_1}(v,L)}=\phi|_L$. Take $\Phi$ to be $\phi$ on $Z$ and $\Phi'$ on $L$. 

    \textbf{(2)} Let us show that the condition for superstar-transitivity holds for a general simplex $\sigma$ in $Z*L$, and an isomorphism $\phi:\calN_1(\sigma)\to \calN_1(\sigma)$ such that $\phi|_{\sigma}=\id$. We can assume that $\sigma$ is not a vertex. A general simplex in a join, is a union of two simplices, one from each factor, where at most one of them might be the empty set. 
    If $\sigma\in L$, then by superstar-transitivity of $L$ the same process as before will work.   Assume $\sigma=v\cup\tau$, where $v$ is in $Z$, and $\tau$ is in $L$. Then, 
    $$\calN_1(v\cup\tau)= \calN_1(v\cup\{ \emptyset\} )\cup \calN_1(\{\emptyset\} \cup\tau)= {\calN_1}(v,Z)* L \cup_{{\calN_1(v,Z)}*{\calN_1}(\tau,L)} Z*{\calN_1}(
    \tau,L). $$
    This implies that $\phi|_Z$ and $\phi|_L$ are automorphisms on $Z,L$,  that agrees on ${\calN_1(v,Z)}*{\calN_1}(\tau,L)$, therefore we take $\Phi$ to be $\phi|_Z * \phi |_L$.  
\end{proof}

\subsection{Building a lattice in $T\times X_L$}\label{building a lattice in the product}
Our goal is to describe the combinatorial data needed in order to construct a lattice in $T_c\times X_L$, where $L=O_d$. Fix a set $Z$ of size $c$, and think of $Z$ as the graph with $c$ isolated vertices. Identify $T_c$ with the Davis complex $X_Z$.

\begin{definition}
    Consider a pair $\calD = (\{\zeta_z\}_{z\in Z},\{\delta_D\}_{D\in V(L)})$ consisting of two collections of involutions:
    \begin{enumerate}[label = (D\arabic*)]
    \item \label{zetas are involutions} $\zeta_z \in \Sym_\ell=\Aut(L)$ satisfying $\zeta_z^2=1$, and
    \item \label{deltas are involutions} $\delta_D\in \Sym(Z)$ for every $D\in V(L)$ satisfying $\delta_D^2=1$.
    \end{enumerate}
    The pair $\calD$ is \emph{interlacing} if moreover:
    \begin{enumerate}[label = (D\arabic*)]
    \setcounter{enumi}{2}
    \item \label{interlacing deltaD=1 or deltaD'=1}If $\{D,D'\} \in E(L)$, then $\delta_D=1$ or $\delta_{D'}=1$
    \item \label{interlacing condition on zetas} If $\delta_D(z)=z'$ then $\zeta_z(D)=\zeta_{z'}(D)$ and $\zeta_{z}|_{[\ell]-D} = \zeta_{z'}|_{[\ell]-D}$
    \item \label{interlacing condition on deltas} If $\zeta_{z}(D)=D'$ then $\delta_D (z) = \delta_{D'}(z)$
\end{enumerate}
\end{definition}


\begin{proposition}\label{the lattice Lambda}
    Given an interlacing pair $\calD = (\{\zeta_z\}_{z\in Z},\{\delta_D\}_{D\in V(L)})$, the group given by the following presentation
    \begin{align*}\Lambda = \Big\langle Z \cup V(L)\quad  \big|\quad   &z^2\quad \forall \; z\in Z, \\
    &D^2\quad \forall D\in V(L), \\ 
    &[D,D']\quad  \forall \{D,D'\}\in E(L),\\
    &z \cdot D \cdot \delta_D(z)\cdot \zeta_z(D) \quad \forall z\in Z, \forall D\in V(L)\Big\rangle
    \end{align*}
is a vertex transitive lattice in $\Aut(X_Z)\times \Aut(X_L)$ where $X_Z$ is the $s$-regular tree and $X_L$ is the Davis complex with defining graph $L=O_d$.  Moreover, the local actions on $X_Z$ and $X_L$ are  $\gen{\{\delta_D \}_{D\in V(L)}}\le \Sym(Z) $  and $\gen{\{\zeta_z\}_{z\in Z}}\le \Aut(L)$ respectively.
\end{proposition}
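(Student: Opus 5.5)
The plan is to build a square complex $Y$ with one vertex, in the standard BMW style, and show its universal cover is $T_c\times X_L$. The group $\Lambda$ will act on that cover by deck transformations, simply transitively on vertices.

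Concretely, I would take the Cayley complex of the presentation, with each involution bigon collapsed to an edge as in \Cref{Davis complex}. This gives a complex with:
\begin{itemize}
\item one vertex;
\item one ``half-edge loop'' for each generator in $Z\sqcup V(L)$, namely the quotient of a single edge by the involution;
\item a square for each commutation relator $[D,D']$ with $\{D,D'\}\in E(L)$;
\item a square for each relator $z\cdot D\cdot\delta_D(z)\cdot\zeta_z(D)$.
\end{itemize}
The Cayley graph $A$ of $\Lambda$ with respect to $Z\sqcup V(L)$, bigons collapsed, then has squares attached along the $4$-cycles labelled by relators. Call the result $\tilde Y$. It is simply connected because it is the Cayley 2-complex of a presentation. $\Lambda$ acts on $\tilde Y$ freely and simply transitively on vertices, since generators are involutions and edges carry no orientation.

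The first key step is identifying the link $\Lk(g,\tilde Y)$ at a vertex. Its vertex set is $Z\sqcup V(L)$. An edge $\{D,D'\}$ comes from each commutator square. The mixed relator $zD\delta_D(z)\zeta_z(D)$, read as a $4$-cycle through a vertex, produces the link edges $\{z,D\}$, $\{D,\delta_D(z)\}$, $\{\delta_D(z),\zeta_z(D)\}$ and $\{\zeta_z(D),z\}$ at the four corners.

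The main obstacle is showing that each pair $\{z,D\}$ occurs exactly once, so that the link is precisely $Z*L$ with no multiple edges. This is where the interlacing axioms enter:
\begin{itemize}
\item \ref{zetas are involutions} and \ref{deltas are involutions} show that the relator indexed by $(z,D)$ is, up to cyclic permutation and inversion, the same as those indexed by $(\delta_D(z),D)$ and $(z,\zeta_z(D))$.
\item \ref{interlacing condition on deltas} and \ref{interlacing condition on zetas} show that the four corner pairs of one square are consistent, i.e. that $\delta_{\zeta_z(D)}(z)=\delta_D(z)$ and $\zeta_{\delta_D(z)}(D)=\zeta_z(D)$, so that the square closes up.
\end{itemize}
I would then count: the equivalence classes of pairs $(z,D)$ under these identifications correspond bijectively to the geometric squares, and each class contributes exactly one link edge $\{z,D\}$ at each corner. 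One must also handle the degenerate cases $\delta_D(z)=z$ or $\zeta_z(D)=D$, where the square is folded. In these cases I expect the relator to become a commutator-type square, still contributing exactly one link edge per pair. Condition \ref{interlacing deltaD=1 or deltaD'=1} is used to guarantee that no triangles or extra cells appear among the $V(L)$-squares. Since $Z*L$ is flag, $\tilde Y$ is then a simply connected cube complex with flag links, hence CAT(0), after filling in the $3$-cubes $z\times\{D,D'\}$.

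Next I apply \Cref{uniquness of CAT(0)} together with \Cref{the join is superstar-transitive}. Every link is isomorphic to $Z*O_d$, which is superstar-transitive, so $\tilde Y\cong X_{Z*L}=X_Z\times X_L=T_c\times X_{O_d}$. Thus $\Lambda\le\Aut(T_c\times X_{O_d})$ acts simply transitively on vertices. Since $T_c$ and $X_{O_d}$ are irreducible and not isomorphic, because their links differ, $\Aut$ of the product preserves the factors up to finite index. In fact $\Lambda$ preserves the labelling by $Z$ versus $V(L)$, so $\Lambda\le\Aut(X_Z)\times\Aut(X_L)$. The stabilizers are trivial and the quotient has a single vertex, so $\Lambda$ is discrete and cocompact, hence a uniform lattice.

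Finally, I compute the local actions. The generator $z$ maps the vertex $1$ to $z$. By the mixed relator, its derivative sends the link direction $D$ at $1$ to the direction $\zeta_z(D)$ at $z$; in the $X_L$-factor this is the local action $\zeta_z$. Similarly, $D$ acts on the $Z$-directions by $\delta_D$. By vertex transitivity, the local action at a vertex is the group generated by the local actions of the generators along edge paths. Hence the local actions are $\gen{\{\zeta_z\}}$ and $\gen{\{\delta_D\}}$, respectively, as claimed.
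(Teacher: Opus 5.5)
Your outline follows the paper's route: the Cayley graph with bigons collapsed and coinciding relator disks identified, then the link computation, then superstar-transitivity of $Z*L$ with the uniqueness theorem, then local actions. There is a genuine gap, though. You skip the one step where the remaining interlacing hypotheses are needed, namely the existence of the $3$-cubes.

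The square complex $\tilde Y$ has only the $1$-skeleton of $Z*L$ as its links. That graph has an empty triangle $\{z,D,D'\}$ for every $z\in Z$ and every $\{D,D'\}\in E(L)$. A $3$-cube can only be filled where its whole $2$-skeleton is already present, so ``after filling in the $3$-cubes $z\times\{D,D'\}$'' assumes exactly what must be proved.

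At a vertex $g$, the three squares spanned by the edges $z,D,D'$ must be completed by three more:
\begin{enumerate}
\item the commutator square at $gz$ with labels $\tilde D=\zeta_z(D)$ and $\tilde D'=\zeta_z(D')$ (an edge of $L$ since $\zeta_z\in\Aut(L)$);
\item the square at $gD$ with labels $\delta_D(z)$ and $D'$;
\item the square at $gD'$ with labels $\delta_{D'}(z)$ and $D$.
\end{enumerate}
These six faces close up only under two conditions.
\begin{enumerate}
\item The square at $gD$ must meet the square at $gz$ along the edge labelled $\tilde D'$, which needs $\zeta_{\delta_D(z)}(D')=\zeta_z(D')$ (and symmetrically at $gD'$). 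This is the second half of (D4), $\zeta_z|_{[\ell]-D}=\zeta_{z'}|_{[\ell]-D}$, applied with $D'\subseteq[\ell]-D$. Your plan never uses this part of (D4), which is a sign the step is missing.
\item The squares at $gD$ and $gD'$ must share their $Z$-edge at $gDD'$, which needs $\delta_{D'}(\delta_D(z))=\delta_D(\delta_{D'}(z))$. Otherwise there are two distinct edges from $gDD'$ to the same far corner, and the six squares do not bound a cube. The triangle then stays empty, the link is not flag, and neither Gromov's criterion nor the uniqueness theorem applies. This is exactly what (D3) supplies: one of $\delta_D,\delta_{D'}$ is trivial. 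The paper takes $\delta_{D'}=1$ and writes out the six faces.
\end{enumerate}

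Relatedly, your stated role for (D3), preventing triangles or extra cells among the $V(L)$-squares, is misplaced. Link edges $\{D,D'\}$ come only from the commutator relators, since no mixed relator contains a subword $DD'$. Also $O_d$ is triangle-free, so nothing needs checking there.

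Two smaller points.
\begin{enumerate}
\item $\Lambda$ does not act freely: each generator inverts an edge, and $DD'$ fixes the centre of a square. So the ``one-vertex complex with deck transformations'' framing is inaccurate. Your Cayley-complex version only needs simple transitivity on vertices and is fine.
\item Your last paragraph does not actually prove $\pr_{X_L}(\Lambda)\le U(\gen{\{\zeta_z\}})$. As in the paper, identify the vertex stabilizer of the projection with the image of $\gen{Z}$, which is the stabilizer of the fibre $X_Z\times\{v\}$. Then note that the image of $\gen{V(L)}$ acts on its Cayley graph $\{w\}\times X_L$ with trivial local actions. Finally, factor any element as $g_1hg_2$ with $g_1,g_2$ in the image of $\gen{V(L)}$ and $h$ in the vertex stabilizer.
\end{enumerate}
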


\begin{proof}
    
    Consider the presentation complex $K$ for $\Lambda$. Let $\tild K$ be its universal cover. Consider the complex $Y$ 
    obtained from $\tild K$ by performing the following  operations:
    \begin{enumerate}
        \item Collapse each disk of the form $z^2$ or $D^2$ to an edge. This eliminates all bigons, and now there is a unique edge labeled $z$ or $D$ incident to each vertex.
        \item Every relation of the form $z \cdot D \cdot \delta_D(z)\cdot \zeta_z(D)$ corresponds to possibly more than one disk sharing the same boundary. Identify these disks to one disk so that every 4-cycle is the boundary of exactly one disk.  
        \item Fill every 2-skeleton of a 3-cube, 
        with a 3-cube.
    \end{enumerate}
    It is not hard to see that the complex $Y$ is simply connected, and that $\Lambda$ acts on $Y$ simply transitively on vertices.

    Another way of looking at $Y$ is to consider the unoriented Cayley graph of $\Lambda$ (all generators are of order two, so instead of two oriented edges we put one unoriented edge). Then insert a disk for any relation in the presentation, and continue with filling any 2-skeleton of a 3-cube, with a 3-cube.

    To show that $Y \simeq X_Z \times X_L$, note that $X_Z \times X_L$ is isomorphic to the Davis complex $X_L'$ whose defining graph is the join $L'=Z*L$. By \Cref{the join is superstar-transitive}, $L'$ is superstar-transitive, and so by \Cref{uniquness of CAT(0)} it suffices to show the following claim. 
    
    \begin{claim}\label{the complex is the product}
        The link of each vertex in $Y$ is isomorphic to the join $L'=Z*L$
    \end{claim}
    
    \begin{proof}[Proof of \Cref{the complex is the product}] 
    Consider a vertex $v\in Y$.

    \textbf{Vertices:} The edges incident to $v$ are in bijection with the generators of $\Lambda$. This shows that the vertices in the link $\Lk(v,Y)$ correspond exactly to  the set $Z\cup V(L)$ of generators, which are also the vertices of $L'$. 
    
    \textbf{Edges:} An edge $e$ in $L'$ is  one of two forms: either $e=\{z,D\}$ for $z\in Z$ and $D\in V(L)$ or $e=\{D,D'\}\in E(L)$.

    If $e=\{z,D\}$, then the relation  $z \cdot D \cdot \delta_D(z)\cdot \zeta_z(D)$ in the presentation of $\Lambda$ gives rise to the unique square in $Y$ incident to the two edges in $Y$ that correspond to $z,D$. Therefore, there is a corresponding  edge in the link $\Lk(v,Y)$. 

    Similarly, if $e=\{D,D'\} \in E(L)$ then the relation $[D,D']$ gives rise to a square in $Y$ which shows that $\{D,D'\}$ is an edge in $\Lk(v,Y)$. Conversely, if $\{D,D'\}$ is an edge of $\Lk(v,Y)$ then there is a relation of the presentation of $\Lambda$ containing $DD'$ as a sub-word. The only relation of this form is $[D,D']$, which implies that $\{D,D'\}\in E(L)$.
    

    \textbf{2-simplices:} We are left to show that the 2-simplices in the link $\Lk(v,Y)$ are exactly the 2-simplices in $L'$. 
    Since $L'$ is a flag simplicial complex, it will suffice to show that so is $\Lk(v,Y)$.
    Let $\{z,D,D'\}$ be a 3-cycle in the link $\Lk(v,Y)$. We will show that it corresponds to a 3-cube in $Y$, and therefore it is the boundary of a 2-simplex in the link. 
    This 3-cube is made up of 6 square faces, we write down their labels below:

    
    Since $\calD$ is interlacing, by \ref{interlacing deltaD=1 or deltaD'=1} $\delta_D=1$ or $\delta_{D'}=1$. Without loss of generality, assume $\delta_{D'}=1$.  Denote ${\delta_D(z)=z'},\; \zeta_z(D)=\tilde{D},\; \zeta_z(D')=\tilde{D}'$.
    
    By the definition of $\Lambda$ all the following 4-tuples are the labels of squares in $Y$: 
    \begin{align}
        &\{D,D',D,D'\}, \label{first square} \\
        &\{ z,D,\delta_D(z),\zeta_z(D)\}= \{ z,D,z',\tilde{D}\}, \\
        &\{z,D',\delta_{D'}(z),\zeta_z(D') \}=\{ z,D',z,\tilde{D}' \}.
    \end{align}
    
    By \ref{zetas are involutions}, $\zeta_z\in \Sym_\ell=\Aut(L)$. Since $\{D,D'\}\in E(L)$ is an edge, also its image $\{\tilde{D},\tilde{D}'\}$ under the automorphism $\zeta_z$ is an edge of $L$. So the following 4-tuple is also the label of a square in $Y$:
    \begin{align}
    \{\tilde{D}, \tilde{D}',\tilde{D},\tilde{D}'\}
    \end{align}

    By \ref{interlacing condition on zetas}, $\zeta_z(D)=\zeta_{z'}(D)=\tilde{D}$. Moreover, since $\{D,D'\}\in E(L)$ --  or equivalently $D'\subseteq [\ell]-D$ -- we also have $\zeta_z(D')=\zeta_{z'}(D')=\tilde{D}'$. Therefore, the following 4-tuple is also the label of a square in $Y$: 
    \begin{align}
    \{ z',D',\delta_D'(z'),\zeta_{z'}(D')\}=\{ z',D',z',\tilde{D}'\}.
    \end{align}
    
    Finally, the following 4-tuple is the label of a square in $Y$: 
    \begin{align}
        \{ \delta_{D'}(z),D,\delta_D(\delta_{D'}(z)),\zeta_{\delta_{D'}(z)}(D)\}= {\{z,D, z' ,\tilde{D}\}}. \label{last square}
    \end{align}

    There are six 4-tuples in \eqref{first square}-\eqref{last square} form the the 2-skeleton of a 3-cube in $Y$, as shown in \Cref{fig: the cube}.

    \begin{figure}
        \centering
        \includegraphics[]{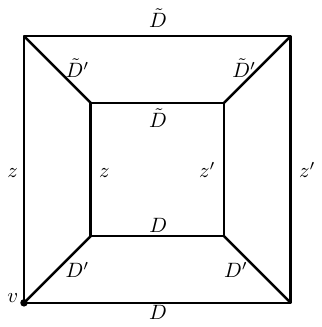}
        \caption{The cube which corresponds to the 2-simplex $\{z_i,D,D'\}$ in $\Lk(v,Y)$}\label{fig: the cube}
    \end{figure} 
    \end{proof}

    To finish the proof of the proposition, we have to show that the local actions on  $X_Z$ and $X_L$ are  $\gen{\{\delta_D \}_{D\in V(L)}}\le \Sym(Z) $  and $\gen{\{\zeta_z\}_{z\in Z}}\le \Aut(L)$ respectively: 

    We will show that the local action of $\Lambda_L = \pr_{X_L}(\Lambda)$ on $X_L$ is $\gen{\{\zeta_z\}_{z\in Z}}$ (the arguments for the other projection are similar).
    Note that since  $\Lambda$ acts transitively on the vertices of $X_Z\times X_L$, $\Lambda_L$ acts transitively on the vertices of $X_L$. Also, by assumption $\gen{\{\zeta_z\}_{z\in Z}}\le \Aut(L)$. We are left to show that $\bfD_v((\Lambda_L)_v) = \gen{\{\zeta_z\}_{z\in Z}}$  for some $v\in X_L$ and  $\Lambda_L\le U\big( \gen{\{\zeta_z\}_{z\in Z}}  \big)$. 
    
    For the first part,  consider  $X_Z\times\{v\}\subseteq X_Z\times X_L$, where $v$ is the vertex in $X_L$ corresponding to the trivial element in $W_L$. This is the Cayley graph associated to $W_Z = \gen{\{z\}_{z\in Z}}$. Therefore, the set-wise stabilizer of $X_Z\times \{v\}$ in $\Lambda$ is $W_Z = \gen{\{z\}_{z\in Z}}$, and $(\Lambda_L)_v = \pr_{X_L}(W_Z)$.  
    For every $z\in Z$, the projection $\pr_{X_L}(z)$ fixes the vertex $v$ in $X_L$, and $\bfD_v(\pr_{X_L}(z))=\zeta_z$.
    Therefore, 
    \begin{align*}
        \bfD_v((\Lambda_L)_v) &= \bfD_v(\pr_{X_L}(W_Z)) \\
        &= \bfD_v(\pr_{X_L} (\gen{\{z\}_{z\in Z}}))\\
        &=\bfD_v(\gen{\{\pr_{X_L} (z)\}_{z\in Z}}) \\
        &= \gen{\{\bfD_v(\pr_{X_L}(z))\}_{z\in Z}}\\
        &= \gen {\{\zeta_z\}_{z\in Z} }
    \end{align*}
    
    For the second part, consider $\{w\}\times X_L\subseteq X_Z\times X_L$ where $w$ is the vertex in $X_Z$ corresponding to the trivial element in $W_Z$. 
    This is the Cayley graph $X_L$ of $W_L = \gen{\{D\}_{D\in V(L)}}$.
    Thus, the action of $\pr_{X_L}(W_L)$ on $X_L$ is the usual action of $W_L$ on its Cayley graph. It satisfies for every $g\in \pr_{X_L}(W_L)$ that $\bfD_x(g)=\id$ for every vertex $x$ in $X_L$.
    
    Let $g\in\Lambda_L$, and $x$ is a vertex in  $X_L$. Then, we can write $g$ as the product $g=g_1hg_2$ where $g_1,g_2\in \pr_{X_L}(W_L)$, $g_2(x)=v$, and $h\in (\Lambda_L)_v$.
    Then, $$\bfD_xg = \bfD_v \circ g_1\bfD_vh \circ \bfD_xg_2 = \bfD_vh\in \gen{\{\zeta_z\}_{z\in Z}}.$$
    Since this holds for all $g\in\Lambda_L$, and for all $x$ in  $X_L$, we get $$\Lambda_L \le  U(\gen{\{\zeta_z\}_{z\in Z}}).$$

\end{proof}

\subsection{Construction of a lattice containing a BMW}
\label{lattice containing a BMW}
Recall that our goal, \Cref{building the lattice}, is to embed an involutive BMW group $\Gamma$ in a lattice $\Lambda\le \Aut(T_c) \times \Aut(X_L)$ with alternating local actions. The lattice $\Lambda$ will be constructed using \Cref{the lattice Lambda} by specifying some interlacing pair $\calD$. In this subsection we describe how to construct $\calD$ from an involutive BMW group of degree $(m,n)$ using an ``$n$-scaffolding'' $\calE$.

\subsubsection{The scaffolding}

\begin{definition}
For $n\in \bbN$, an $n$-\emph{scaffolding} is the data $\calE$ consisting of:
\begin{enumerate}[label = (E\arabic*)]
    \item \label{data E number} two natural numbers $k,d\in \bbN$ 
    so that $\max\{n+1,6\}\le d$; 
    \item \label{data E neighbors A'}a vertex $A'\in V(L)$, and a set $\calA'$ of $n+1$ (out of the $d$) neighbours of $A'$, which we denote $\calA'=\{A_1,\dots,A_n,B\}$;
    \item \label{data E upsilons}even involutions $\upsilon_1,\dots,\upsilon_k$ which generate $\Alt_{2d-1}$, with  ${\supp(\upsilon_1)\subseteq B}$;
    \item \label{data E sets C} a set $\calC=\{C_{1},\ldots,C_{k}\}\subseteq V(L)\setminus\calA'$ such that $\calA'\cup\calC$ is an independent set of vertices in the graph $L$, and $$\bigcup_{j=i}^{i+3}\supp(\upsilon_{j\pmod k})\subseteq C_{i}$$ for every $i\in[k]$.\footnote{We use the notation $\pmod k$ for the residue in $[k]=\{1,2,\ldots,k\}$.}
\end{enumerate}
\end{definition}

\begin{lemma}\label{existence of scaffoldings}
For every $n$ there exists an $n$-scaffolding.
\end{lemma}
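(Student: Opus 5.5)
The conditions \ref{data E number}--\ref{data E sets C} are all about subsets of $[\ell]$, $\ell=2d-1$, so I will build an $n$-scaffolding explicitly. The approach is to take $d$ large compared to $n$ and to the size of the supports of the $\upsilon_j$. Then every intersection condition can be met by putting one fixed element $p$ into all the relevant sets. Recall that independence in $L=O_d$ means \emph{pairwise intersecting} $(d-1)$-subsets, and adjacency means disjointness.

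\textbf{Choice of $d$, $A'$, $\calA'$ and $p$.} Fix $d\ge\max\{n+2,18\}$; this satisfies \ref{data E number}. Let $A'=\{1,\dots,d-1\}$, so $R:=[\ell]\setminus A'$ has $d$ elements. The neighbours of $A'$ in $L$ are exactly the sets $R\setminus\{r\}$ for $r\in R$. Choose $n+1$ distinct elements $r_1,\dots,r_n,r_B\in R$, and put $A_i=R\setminus\{r_i\}$ and $B=R\setminus\{r_B\}$; this gives \ref{data E neighbors A'}. Since $d\ge n+2$, some $p\in R$ differs from all $r_i$ and from $r_B$, so $p\in A_1\cap\dots\cap A_n\cap B$. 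The sets in $\calA'$ also pairwise intersect, since two $(d-1)$-subsets of the $d$-set $R$ meet in $d-2\ge1$ points.

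\textbf{The involutions.} Use the standard fact that for $\ell\ge5$ the double transpositions $(a\,b)(c\,e)$ generate $\Alt_\ell$. Enumerate all of them as $\upsilon_1,\dots,\upsilon_k$, with $\upsilon_1$ chosen to have support in $B$; this is possible because $|B|=d-1\ge4$. This gives \ref{data E upsilons}. Each support has size $4$, so for each $i$ the set $S_i=\bigcup_{j=i}^{i+3}\supp(\upsilon_{j\pmod k})$ has at most $16$ elements.

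\textbf{The sets $C_i$.} Let $C_i$ be any $(d-1)$-subset of $[\ell]$ containing $S_i\cup\{p\}$; such a set exists because $|S_i\cup\{p\}|\le17\le d-1$. Then every $C_i$ contains $p$, so the $C_i$ meet each other and every member of $\calA'$, and $\calA'\cup\calC$ is independent. Moreover $C_i\notin\calA'$ as long as $C_i$ contains at least one element of $A'$. This can always be arranged: by choosing $d$ somewhat larger, say $d\ge 20$, there is room to pad $S_i\cup\{p\}$ with an element of $A'$.

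The only real subtlety is whether $\calC$ must consist of $k$ \emph{distinct} vertices. I expect this is the main point to check, but it is easily handled by the freedom in padding. Each $C_i$ has $d-1-|S_i\cup\{p\}|$ free elements chosen from a pool of size about $2d-18$, so there are more than $k$ choices once $d$ is large; note that $k$ grows only polynomially in $d$ while the number of choices grows exponentially. One can therefore choose the padded elements greedily to make the $C_i$ pairwise distinct, each containing a point of $A'$. Enlarging $d$ further if necessary, all of \ref{data E number}--\ref{data E sets C} then hold, so an $n$-scaffolding exists.
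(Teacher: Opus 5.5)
Your proof is correct, but it takes a different route from the paper's. The paper's proof writes down one explicit scaffolding with small parameters and leaves the checks implicit:
\begin{itemize}
\item $d=\max\{n+1,6\}$ and $k=2d-1$;
\item a specific $A'$ whose complement is $\{1,2,3,4\}\cup\{8,10,\dots,2d-2\}$, with the $A_i$ and $B$ obtained by deleting one point of that complement;
\item the cyclic double transpositions $\upsilon_i=(i,i+1)(i+2,i+3)$;
\item $C_i$ equal to the cyclic interval $\{i,\dots,i+d-3\}$ together with the point $1$ (or $d$ if $1$ already lies in the interval).
\end{itemize}
So all the $C_i$ share the point $1$. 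A short check with cyclic intervals then shows that they meet every member of $\calA'$, avoid $\calA'$, and are pairwise distinct.

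You instead force every intersection through a single common point $p$, and you replace the case-checking by crude size and counting estimates. The price is $d\ge n+2$, $d$ very large, and $k=3\binom{2d-1}{4}$, which later gives a huge tree degree $c=m+k$. The explicit construction buys concreteness and parameters linear in $n$. Yours buys a uniform verification with no case analysis. You also correctly flag and handle distinctness of the $C_i$, which is needed for $\delta_{C_i}=\gamma_i$ to be well defined and which the paper leaves unstated.

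Note also that the paper's intervals contain $\bigcup_{j=i}^{i+3}\supp(\upsilon_j)=\{i,\dots,i+6\}$ only when $d-2\ge 7$. So its explicit choice actually requires $d\ge 9$ rather than $d\ge\max\{n+1,6\}$. Your ``take $d$ large'' approach absorbs constraints of this kind automatically.
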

\begin{proof}
    We write down an explicit $n$-scaffolding $\calE$:
\begin{itemize}
    \item $d=\max\{n+1,6\}$ and $k=2d-1$.
    \item $A'=\{5,6,7,9,11,13,15,\ldots,2d-1 \}= \{2i-1 | 3\leq i\leq d\} \cup\{6\}$.
    \item For $1\leq i\leq 4$, $A_i=[2d-1]\setminus (A'\cup  \{i\})$.

    For $5\leq i\leq n$, $A_i=[2d-1]\setminus(A'\cup  \{2i\})$.
    \item $B=\{1,2,3,4\}\cup\{10,12,14,16,\ldots,2d-2 \}= \{2i-2 | 6\leq i\leq d\} \cup\{1,2,3,4\}$.
    \item $\upsilon_i=(i,i+1)(i+2,i+3)\mod(2d-1)$.
    \item Define $C_i'=\{i,i+1,\ldots, i+d-3 \}_{\mod 2d-1} =\cup_{j=i}^{i+d-3}\{j\}$  ($\mod 2d-1$). 
    
    So $|C_i'|=d-2$, and finally,
    \item $C_i=C_i' \cup \begin{cases}
			\{1\}, & \text{if $1\notin C_i'$ }\\
            \{d\}, & \text{otherwise}.
		 \end{cases}$
\end{itemize} 
\end{proof}

\subsubsection{Presentation and local actions of involutive BMW groups.}
Every involutive BMW group $\Gamma$ of degree $(m,n)$ has a presentation of the following form:
\begin{equation}\label{BMW presentation}
\Gamma=\left< \calX=\{x_1,\ldots x_m\}\cup \calA=\{a_1,\dots,a_n\}\, |\,  \calR \right>
\end{equation}
where the set of relations $\calR$ consists of:
\begin{itemize}
    \item for all $1\le i\le m$, the relation $x_i^2\in \calR$, 
    \item for all $1\le j\le n$, the relation $a_j^2\in \calR$, and
    \item for all $1\le i\le m,1\le j\le n$, there is a unique relation $x_ia_jx_{i'}a_{j'}\in \calR$ up to cyclic permutation and inverses.
\end{itemize}  

\begin{remark}\label{rem: complete involutive BMW}
 In the relations of type $x_ia_jx_{i'}a_{j'}$, $i,i'$ are not necessarily distinct, and so are $j,j'$.
\end{remark}


The local action of the projection of $\Gamma$ to $T_n$ is the group 
$\gen{\{\xi_i\}_{i=1}^m}$ where $\xi_i$ is the permutation 
defined by the relations in $\calR$ 
in the following way:
For every $1\le j\le n$,  $\xi_i(a_j)=a_{j'}$ and $\xi_i(a_{j'})=a_{j}$, where either  $x_ia_jx_{i'}a_{j'}\in \calR$ or $x_ia_{j'}x_{i'}a_{j}\in \calR$, for some $1\le i'\le  m$.

Similarly, the local action of the projection of $\Gamma$ to $T_m$ is the group  $\gen{\{\alpha_j\}_{j=1}^n}$, where for every $1\le j\le n$ and for every $1\le i\le m$,  
$\alpha_j(x_i)=x_{i'}$ and $\alpha_j(x_{i'})=x_i$ if either  $x_ia_jx_{i'}a_{j'}\in \calR$ or $x_{i'}a_{j}x_{i}a_{j'}\in \calR$, for some $1\le j'\le  n$.

In words, when thinking of the elements in a relation $x_ia_jx_{i'}a_{j'}\in \calR$ as the vertices of a 4-cycle, then the permutation associated with a vertex switches between its two neighbours.

Note that by the properties of $\calR$, these permutations are well defined.





\subsubsection{Constructing an interlacing pair from an involutive BMW using a scaffolding}
\label{BMW+scaffolding defines interlacing pair}

Given an involutive BMW group $\Gamma$ of degree $(m,n)$ with the presentation \ref{BMW presentation} whose local actions are $\Alt_m$ and $\Alt_n$ respectively, and an $n$-scaffolding $\calE$, we will show how one can construct an interlacing pair $\calD=(\{\zeta_z\}_{z\in Z},\{\delta_D\}_{D\in V(L)})$ such that $\Gamma$ embeds in the group $\Lambda$ defined in \Cref{the lattice Lambda}.

First we describe the index sets $Z$ and $L=O_d$: As usual, $L=O_d$ (where $d$ is part of $\calE$, see \ref{data E number}). For $Z$ we take the disjoint union of $\calX = \{x_1,\dots,x_m\}$ and a set $\calY=\{y_1,\dots,y_k\}$. 

The involutions $\zeta_z$ and $\delta_D$ are defined as follows:
\begin{equation}\label{definition of zeta and delta}
    \zeta_z = \begin{cases} \xi_i & \text{if }z = x_i \in \calX \\ \upsilon_i & \text{if }z = y_i \in \calY \end{cases}
    \hspace{2cm}\delta_D = \begin{cases} \alpha_i & \text{if }D = A_i \in \calA' \\ \beta & \text{if }D = B \in \calA' \\ \gamma_i & \text{if }D=C_i \in \calC \\ \id & \text{otherwise}\end{cases}
\end{equation}
where the involutions $\upsilon_i$ are part of the $n$-scaffolding $\calE$ and the involutions $\xi_i, \alpha_i, \beta, \gamma_{i}$ are defined below:


    \textbf{Defining $\xi_i$.}  

For every $1\le i\le m$ we define $\xi_{i}\in\Alt_{2d-1}$ as follows:
The elements $A_1,\dots,A_n$ are neighbours of $A'$. Therefore, there is an embedding $\bar \cdot : [n] \mapsto [2d-1]$ defined by $\bar{j}$ is the unique element of $[2d-1] - (A_i \cup A')$.\footnote{In fact, Odd graphs were named so because each edge in the graph has an "odd one out", an element that does not participate in the two sets connected by the edge.}
The involution $\xi_i$ is defined as follows:
$$ \xi_i (k) = \begin{cases} \bar{j'} & \text{if }k=\bar{j}\text{ and }x_{i}\cdot a_{j}\cdot x_{i'}\cdot a_{j'}\in \calR\text{ for some }i'\\
k & \text{otherwise}\end{cases}$$

In other words, for every $x_{i}\cdot a_{j}\cdot x_{i'}\cdot a_{j'}\in \calR$, the involution $\xi_i$ contains the transposition $(\bar{j}\; \bar{j'})$ in its cycle decomposition. In particular, $\xi_i$ is an involution.

Alternatively, let $\xi'_i\in \Sym_n$ be the local action of $x_i\in \Gamma \le \Aut(T_m)\times \Aut(T_n)$ on the labels of the tree $T_n$ (i.e., on the set $[n]$). Then $\xi_i$ is the image of $\xi'_i$ under the embedding $\Sym_n \inj \Sym_{2d-1}$  induced by the embedding $[n]\inj [2d-1]$ given by $j\mapsto \bar{j}$.
Using this point of view, it follows that if the local actions of $\Gamma$ are alternating then 
\begin{equation}\label{xis are in alt}
\xi_1,\ldots,\xi_m\in \Alt_{2d-1}.
\end{equation}





\textbf{Defining $\alpha_i$.}  

For every $1\le j\le n$ we define $\alpha_{j}\in\Alt(Z)$ by
$$\alpha_j(z) = \begin{cases} 
x_{i'} & \text{if }z= x_i\text{ and }x_i a_j x_{i'} a_{j'}\in \calR\text{ for some }j'\\
z & \text{otherwise}
\end{cases}$$
Alternatively, it is the product of the following transpositions
$$ \alpha_{j}=\prod_{x_{i}\cdot a_{j}\cdot x_{i'}\cdot a_{j'}\in \calR} (x_{i}\;x_{i'}). $$
In particular, $\alpha_i$ is an involution.

This is the local action of $a_j\in \Gamma\le \Aut(T_m) \times \Aut(T_n)$ on the labels $\calX=\{x_1,\dots,x_n\}$ of the tree $\Aut(T_m)$, extended by the identity on $\calY$ to $Z=\calX \cup \calY$.

\textbf{Defining $\beta$.} 

We define $\beta$ to be the product of the two transpositions  $\beta=(x_1 \; y_1)(x_2 \; x_3)$. 

\textbf{Defining $\gamma_i$.} 

 For every $1\leq i\leq k$ we define $\gamma_{i}=(y_i, y_{i+1})(y_{i+2},y_{i+3})$, where the indices $i+j$ are taken $\pmod k$. 
 





\begin{lemma}\label{from involutive to interlacing}
    Given an involutive BMW group $\Gamma$ of degree $(m,n)$ and an $n$-scaffolding $\calE$, the pair $\calD = (\{\zeta_z\}_{z\in Z}, \{\delta_D\}_{D\in V(L)})$ defined above is interlacing.
\end{lemma}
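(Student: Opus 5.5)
Checking that $\calD$ is interlacing means verifying \ref{zetas are involutions}--\ref{interlacing condition on deltas} for the explicit involutions in \eqref{definition of zeta and delta}, case by case on whether $z\in\calX$ or $z\in\calY$, and on whether $D\in\{A_1,\dots,A_n\}$, $D=B$, $D\in\calC$, or $D$ is anything else.

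\textbf{Conditions \ref{zetas are involutions}, \ref{deltas are involutions} and \ref{interlacing deltaD=1 or deltaD'=1}.} The first two hold by construction. Each $\xi_i$ is a product of disjoint transpositions $(\bar j\;\bar{j'})$, each $\upsilon_i$ is an involution by \ref{data E upsilons}, and each of $\alpha_j,\beta,\gamma_i$ is a product of disjoint transpositions. Here one uses that in $\beta=(x_1\,y_1)(x_2\,x_3)$ the indices are distinct, which needs $m\ge 3$; this is harmless since the local actions are alternating. For \ref{interlacing deltaD=1 or deltaD'=1}, $\delta_D\neq 1$ only when $D\in\calA'\cup\calC$. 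This set is independent in $L$ by \ref{data E sets C}, so no edge of $L$ has both endpoints with nontrivial $\delta$.

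\textbf{Condition \ref{interlacing condition on zetas}.} Let $\delta_D(z)=z'\neq z$. Then $D\in\calA'\cup\calC$, and we split into three cases.
\begin{itemize}
\item If $D=A_j$, then $z=x_i$ and $z'=x_{i'}$ with $x_ia_jx_{i'}a_{j'}\in\calR$. So $\xi_i(\bar j)=\bar{j'}=\xi_{i'}(\bar j)$. Both $\xi_i$ and $\xi_{i'}$ are supported on $\{\bar 1,\dots,\bar n\}$, which is disjoint from $A'$. The goal is to show $\xi_i(A_j)=\xi_{i'}(A_j)$ and that $\xi_i,\xi_{i'}$ agree off $A_j$. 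Off $A_j$ the only points they can move are $\bar j$ (where they agree) and points of $A'$ (which both fix). Agreement off $A_j$ together with both being permutations then forces $\xi_i(A_j)=\xi_{i'}(A_j)$.
\item If $D=B$, the pairs are $\{x_1,y_1\}$ and $\{x_2,x_3\}$. Since $B\cap\{\bar 1,\dots,\bar n\}=\emptyset$ (the $A_j$ and $B$ are all neighbours of $A'$ and $B$ is disjoint from $A'$, so the $\bar j$ lie in $B$'s complement exactly as needed), $\xi$'s act trivially on $B$. So for $\{x_2,x_3\}$ one needs $\xi_2,\xi_3$ to agree off $B$; I expect this to need an assumption such as choosing the labelling so $\xi_2=\xi_3$ on the relevant points, and I would check it against the explicit scaffolding. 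For $\{x_1,y_1\}$, use $\supp(\upsilon_1)\subseteq B$: then $\upsilon_1$ is the identity off $B$, and $\xi_1$ should also be the identity off $B$, meaning the $\bar j$ must lie in $B$. So I will verify that in the explicit scaffolding the points $\bar j$ lie in $B$; they do, since $\bar j\in\{1,2,3,4\}\cup\{\text{evens}\}$.
\item If $D=C_i$, the swaps are $y_i\leftrightarrow y_{i+1}$ and $y_{i+2}\leftrightarrow y_{i+3}$. Since $\supp\upsilon_{i},\dots,\supp\upsilon_{i+3}\subseteq C_i$, each of these involutions is the identity off $C_i$ and stabilizes $C_i$, so the condition holds.
\end{itemize}

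\textbf{Condition \ref{interlacing condition on deltas}.} Let $\zeta_z(D)=D'\neq D$; we must show $\delta_D(z)=\delta_{D'}(z)$, splitting on $z$.
\begin{itemize}
\item If $z=y_i$, then $\upsilon_i$ moves $D$, so $D\neq C_j$ for $j\in\{i-3,\dots,i\}$ (those $C_j$ contain $\supp\upsilon_i$ and are stabilized). We must show that whenever $D$ or $D'$ lies in $\calA'\cup\calC$, the involution $\delta$ fixes $y_i$ on both. The $\alpha$'s fix $\calY$. $\beta$ moves only $y_1$, and $\upsilon_1$ stabilizes $B$. Each $\gamma_j$ moves $y_i$ only for $j\in\{i-3,\dots,i\}$, and those $C_j$ are stabilized by $\upsilon_i$. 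So both sides equal $y_i$.
\item If $z=x_i$, then $\xi_i$ moves $D$. The sets $\calC$, $B$ and $A'$ avoid the $\bar j$'s, so they are stabilized and $\delta$ on them fixes $x_i$ unless $D=B$ (need $\xi_i(B)=B$, which holds). For $D=A_j$, note that $\xi_i(A_j)=A_{\xi'_i(j)}$, because $\xi_i$ fixes $A'$ and sends $\bar j$ to $\bar{j'}$. The condition then reads $\alpha_j(x_i)=\alpha_{j'}(x_i)$, and this is exactly the square relation $x_ia_jx_{i'}a_{j'}$ read as in the definitions of $\alpha$ and $\xi$.
\end{itemize}

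The main obstacle is the bookkeeping for $D=B$ and $D=A_j$, namely confirming from the explicit scaffolding where the $\bar j$ lie relative to $B$ and the $C_i$. I would settle this first.
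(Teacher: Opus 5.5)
Your case structure matches the paper's, and several cases are correct: $D=A_j$ and $D=C_i$ in (D4), and $z=y_i$ in (D5). The case $D=B$ of (D4), however, is not proved, because it rests on a false claim. You assert $B\cap\{\bar 1,\dots,\bar n\}=\emptyset$, i.e.\ that the $\xi_i$ act trivially on $B$. The opposite is true, and it follows from (E2) alone. Both $A_j$ and $B$ are neighbours of $A'$, so they are distinct $(d-1)$-subsets of the $d$-element set $[\ell]\setminus A'$. Thus $A_j=([\ell]\setminus A')\setminus\{\bar j\}$ and $B=([\ell]\setminus A')\setminus\{b_0\}$ with $\bar j\neq b_0$, whence $\bar j\in B$. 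Consequently $\supp(\xi_i)\subseteq\{\bar 1,\dots,\bar n\}\subseteq B$ for every $i$. Together with $\supp(\upsilon_1)\subseteq B$ from (E3), all four of $\zeta_{x_1},\zeta_{y_1},\zeta_{x_2},\zeta_{x_3}$ stabilize $B$ and restrict to the identity on $[\ell]\setminus B$. This settles (D4) for both transpositions of $\beta$ at once. Because of the reversed claim, you expected the pair $\{x_2,x_3\}$ to need an extra labelling assumption, which it does not. For $\{x_1,y_1\}$ you only checked $\bar j\in B$ in the explicit scaffolding of Lemma~\ref{existence of scaffoldings}, whereas the lemma is stated for an arbitrary $n$-scaffolding. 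The observation $\bar j\in B$ is exactly what the paper uses here.

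A related false statement appears in (D5) for $z=x_i$. The sets $C_l$ need not avoid the $\bar j$'s: in the explicit scaffolding, $C_1=\{1,\dots,d-2,d\}$ contains $\bar 1=1$. So you cannot conclude that $\xi_i$ stabilizes each $C_l$. The conclusion survives for a different reason: $\xi_i$ maps $\calA'$ to itself, since it sends $A_j$ to $A_{j'}$ (as you showed) and fixes $B$ (by the above). Hence if $D\notin\calA'$ then $\xi_i(D)\notin\calA'$. Then both $\delta_D$ and $\delta_{\xi_i(D)}$ are some $\gamma_l$ or the identity, and these fix $x_i$. With these two corrections the proof is complete. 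Your complement argument for $\xi_i(A_j)=\xi_{i'}(A_j)$, and your explicit handling of the case where only $D'=\zeta_z(D)$ is special, are if anything tidier than the paper's wording.
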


\begin{proof} 
    By their definition, $\zeta_z, \delta_D$ are involutions in $\Alt(\ell),\Alt(Z)$ respectively, so \ref{zetas are involutions} and \ref{deltas are involutions} hold for $\calD$.
    
        \medskip
    
    \ref{interlacing deltaD=1 or deltaD'=1}: consider an edge $\{D,D'\} \in E(L)$. By \ref{data E sets C}, $\calA' \cup \calC$ is assumed to be an independent set in $L$. Thus, at least one of $D,D'$ is not in $\calA'\cup \calC$.  Therefore, by definition \eqref{definition of zeta and delta}, at least one of $\delta_D,\delta_{D'}$ is the identity.

    \medskip
    
    \ref{interlacing condition on zetas}: 
    Consider $z,z',D$ such that $\delta_D(z)=z'$, we want to show that $\zeta_z(D)=\zeta_{z'}(D)$ and $\zeta_z|_{[\ell]-D} = \zeta_{z'}|_{[\ell]-D}$. 
    
    If $z=z'$ there is nothing to prove. So if suffices to consider $z\in \supp (\delta_D)$. In particular, this proves the case $D\notin \calA' \cup \calC$ as in this case $\delta_D=\id$.

    If $D=A_{j}\in\calA'$, then by \eqref{definition of zeta and delta}, $\delta_D = \delta_{A_j}=\alpha_j$. 
    By the definition of $\alpha_j$ we have $\supp(\alpha_j)\subseteq \{x_1,\dots,x_m\}$.
    Let $z=x_i\in \supp(\alpha_j)$ and $z'= \alpha_j(x_i) = x_{i'} \in \supp(\alpha_j)$. 
    By \eqref{definition of zeta and delta}, $\zeta_z = \xi_i$ and $\zeta_{z'} = \xi_{i'}$.
    Since $\alpha_j(x_i)= x_{i'}$, By the definition of $\alpha_j$, this happens only if $x_{i}\cdot a_{j}\cdot x_{i'}\cdot a_{j'}\in \calR$ for some $j'$. 
    Now, by the definition of $\xi_i$ and $\xi_{i'}$, the relation $x_{i}\cdot a_{j}\cdot x_{i'}\cdot a_{j'}\in \calR$ implies that 
    $\xi_i(\bar{j}) = \bar{j}' = \xi_{i'}(\bar{j})$.
    Therefore,  $\delta_z(D)=\xi_{i}(A_{j})=A_{j'}=\xi_{i'}(A_{j}) = \delta_{z'}(D).$ Also, for $l\in [\ell]-A_j$, either $l\in A'$ in which case $\xi_i(l) = l = \xi_{i'}(l)$, or 
    $l = \bar{j}$ in which case $\xi_{i}(\bar{j})= \bar{j'} = \xi_{i'}(\bar{j}).$
    This shows that $\zeta_z|_{[\ell]-A_j} = \zeta_{z'}|_{[\ell] - A_j}$.

    If $D = B \in \calA'$ then $\delta_D= \delta_B = \beta$. 
    Let $z\in \supp(\delta_D) = \supp(\beta) = \{x_1,y_1,x_2,x_3\},$ and consider $\supp(\zeta_z)$. 
    If $z=y_1$ then by assumption \ref{data E upsilons}, $\supp(\zeta_{y_1}) = \supp(\upsilon_1) \subseteq B.$ Otherwise, $z\in \{x_1,x_2,x_3\}$ and by construction $\supp (\zeta_{x_i}) = \supp(\xi_i)\subseteq \{\bar{j} \;|\; j\in [m]\} \subseteq B.$
    In either case, $\supp (\zeta_z) \subseteq B$, and similarly $\supp(\zeta_{z'}) \subseteq B$. So $\zeta_z(B)=\zeta_z'(B)=B$ and $\zeta_z|_{[\ell]-B} = \zeta_{z'}|_{[\ell]-B} = \id_{[\ell]-B}$.


    If $D = C_i\in \calC$, then $\delta_D = \delta_{C_i}= \gamma_i$. 
    Let $z\in \supp(\delta_D) = \supp(\gamma_i) = \{y_i,y_{i+1},y_{i+2},y_{i+3}\},$ and consider $\supp(\zeta_z)$. For $i\le j \le i+3$, by the definition \eqref{definition of zeta and delta}, $\zeta_{y_j} = \upsilon_j$ and so, by assumption \ref{data E sets C}, $\supp(\zeta_{y_j}) = \supp(\upsilon_j) \subseteq C_i$. The desired result follows as before.


        \bigskip 
    
    \ref{interlacing condition on deltas}: Let $z,D,D'$ be such that $\zeta_z(D)=D'$. We want to show that 
    \begin{equation}\label{equality for deltas}
    \delta_D(z) = \delta_{D'}(z)
    \end{equation}
    
    
    If $z=x_i \in \calX$, then by \eqref{definition of zeta and delta} $\zeta_z = \xi_i$. 
    If $D=A_j \in \calA'$ for some $j$, denote $\bar{j'}=\xi_i(\bar{j})$, $\zeta_z(D)=\xi_i(A_j)=A_{j'}$. 
    By the definition of $\xi_i$, $x_i a_j x_{i'} a_{j'}\in\calR$. Hence, by the definition of $\alpha_j$ we also have $\delta_{D}(z) = \alpha_j(x_i) = x_{i'} = \alpha_{j'}(x_i) = \delta_{D'}(z)$.
    If $D=B$ then $\supp(\xi_i) \subseteq \{\bar{j} | j\in [m]\}\subseteq B$ so $\zeta_z(D)=D'$ and \eqref{equality for deltas} holds trivially.
    If $D\notin \calA'$ then by \eqref{definition of zeta and delta} $\delta_D = \gamma_i$ or $\delta_D=\id$ and in either case $\delta_D(z) =\delta_D(x_i)=z$ and similarly  $\delta_{D'}(z)=z$ so \eqref{equality for deltas} holds.

    If $z=y_i \in \calY$, then by \eqref{definition of zeta and delta} $\zeta_z = \upsilon_i$. 
    If $D=C_j \in \calC$ for $i-3\le j\le i$, by \ref{data E sets C}, $\supp(\upsilon_i) \subseteq C_j$ and so $D'=\zeta_z(D) = \upsilon_i(C_j) = C_j$ and \eqref{equality for deltas} trivially holds.
    If $D=B \in \calA'$ and $z=y_1$ then by \ref{data E sets C}, $\supp(\upsilon_1) \subseteq B$ and so $D' = \zeta_z(D) = \upsilon_1(B)=B$ and \eqref{equality for deltas} trivially holds.
    In all other cases \eqref{definition of zeta and delta} implies $\delta_D(z) = z = \delta_{D'}(z)$, and again \eqref{equality for deltas} holds.
    
    

    

    

    
\end{proof}

\subsection{Proof of \Cref{building the lattice}}
\label{proof of building the lattice}

\begin{proof}[Proof of \Cref{building the lattice}]
Let $\Gamma$ be an involutive BMW of degree $(m,n)$, and let $\calE$ be an $n$-scaffolding (\Cref{existence of scaffoldings}).
By \Cref{from involutive to interlacing} the pair $\calD$ defined in \S\ref{BMW+scaffolding defines interlacing pair} is interlacing. 
By \Cref{the lattice Lambda} $\calD$ gives rise to a lattice $\Lambda \le \Aut(T_{c})\times \Aut(X_{O_d})$ where $c=m+k$ and $k,d$ are given by the scaffolding data $\calE$.
 In the next two claims we will show that the local actions are alternating and $\Gamma$ embeds in $\Lambda$.

\begin{claim}\label{the local actions of Lambda}
    The local actions of $\Lambda\le \Aut(X_Z) \times \Aut(X_L)$ are $\Alt(Z)$ and $\Alt_\ell$.
\end{claim}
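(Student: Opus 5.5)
By \Cref{the lattice Lambda}, the local actions of $\Lambda$ on $X_Z$ and $X_L$ are $\gen{\{\delta_D\}_{D\in V(L)}}\le\Sym(Z)$ and $\gen{\{\zeta_z\}_{z\in Z}}\le\Sym_\ell$. So I will show that each of these groups consists of even permutations and contains the full alternating group.

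\textbf{The $X_L$ factor.} By \eqref{xis are in alt}, $\xi_1,\dots,\xi_m\in\Alt_\ell$, using that the local actions of $\Gamma$ are alternating. By \ref{data E upsilons}, the $\upsilon_i$ are even involutions that already generate $\Alt_{2d-1}$. Hence
\[
\Alt_\ell=\gen{\upsilon_1,\dots,\upsilon_k}\le\gen{\{\zeta_z\}_{z\in Z}}\le\Alt_\ell,
\]
and equality follows immediately.

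\textbf{The $X_Z$ factor: evenness.} I first check that every generator is even.
\begin{itemize}
\item Each $\gamma_i$ is a product of two disjoint transpositions, since the indices are distinct mod $k$ and $k\ge 4$.
\item $\beta=(x_1\;y_1)(x_2\;x_3)$ is likewise a product of two disjoint transpositions.
\item Each $\alpha_j$ acts on $\calX$ as the local action of $a_j$ on $T_m$, which lies in $\Alt_m$ by hypothesis, and it is the identity on $\calY$. So $\alpha_j\in\Alt(Z)$.
\end{itemize}
Thus $\gen{\{\delta_D\}}\le\Alt(Z)$.

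\textbf{The $X_Z$ factor: generation.} Let $H=\gen{\alpha_1,\dots,\alpha_n,\beta,\gamma_1,\dots,\gamma_k}$.
\begin{itemize}
\item The restrictions $\alpha_j|_{\calX}$ generate $\Alt(\calX)=\Alt_m$, because that is the local action of $\Gamma$ on $T_m$. Since the $\alpha_j$ fix $\calY$, we get $\Alt(\calX)\le H$ acting on $\calX$ and fixing $\calY$.
\item The $\gamma_i$ only move $\calY$. I claim they generate $\Alt(\calY)$: they are exactly the images of the $\upsilon_i$ under the bijection $i\mapsto y_i$ (with $k=2d-1$ as in the explicit scaffolding), and the $\upsilon_i$ generate $\Alt_k$. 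This gives $\Alt(\calY)\le H$.
\item Finally $\beta$ links the two blocks. Conjugating $\beta$ by elements of $\Alt(\calX)\times\Alt(\calY)$ produces many double transpositions of the form $(x\;y)(x'\;x'')$. Moreover $H$ is transitive on $Z$, since $\beta$ joins the orbits $\calX$ and $\calY$.
\end{itemize}
The cleanest conclusion is via Jordan's theorem: $H$ is primitive and contains a $3$-cycle (available since $m\ge 3$), hence $H\supseteq\Alt(Z)$. For primitivity, the point stabilizer of $y_1$ contains $\Alt(\calX)\times\Alt(\calY\setminus\{y_1\})$. Its orbits are $\{y_1\}$, $\calX$ and $\calY\setminus\{y_1\}$, which forces any block system to be trivial. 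An alternative is the standard fact that $\Alt(\calX)\times\Alt(\calY)$ together with one double transposition crossing the two parts generates $\Alt(\calX\sqcup\calY)$, provided both parts have size at least $3$.

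\textbf{Main obstacle.} I expect this last generation step to be the hardest. It needs some care with the sizes and with the fact that a general $n$-scaffolding only guarantees the $\upsilon_i$ generate $\Alt_{2d-1}$ while indexing $\calY$ by $[k]$. If $k\ne 2d-1$, I would fall back on the explicit scaffolding of \Cref{existence of scaffoldings}. There the $\gamma_i$ are the same double transpositions as the $\upsilon_i$, so they generate $\Alt(\calY)$ directly.
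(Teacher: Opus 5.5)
Your proof is correct, and it follows the paper's outline for the $X_L$ factor, for the evenness of all generators, and for $\gen{\alpha_j}=\Alt(\calX)$ and $\gen{\gamma_i}=\Alt(\calY)$. Where you genuinely differ is the final step, concluding $H=\Alt(Z)$.

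The paper asserts that $\Alt(\calX)\times\Alt(\calY)$ is a maximal subgroup of $\Alt(Z)$ when $|\calX|\neq|\calY|$, and that $\beta$ lies outside it. You instead show that $H$ is primitive and contains a $3$-cycle, and then apply Jordan's theorem. Your route is the more robust one. Taken literally, the paper's maximality claim is off: the maximal intransitive subgroup is $(\Sym(\calX)\times\Sym(\calY))\cap\Alt(Z)$, which contains $\Alt(\calX)\times\Alt(\calY)$ with index $2$. So the paper's one-line argument needs a small repair. One option is the crossing $3$-cycle $\beta(x_1\,x_4\,x_5)\beta^{-1}=(y_1\,x_4\,x_5)$, after which all $3$-cycles follow easily. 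Your argument also does not need the hypothesis $|\calX|\neq|\calY|$.

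The price of your route is the primitivity check, which you state too quickly. Knowing the orbits of a subgroup of $H_{y_1}$ only tells you that a block through $y_1$ is one of $\{y_1\}$, $\{y_1\}\cup\calX$, $\calY$ or $Z$. You still have to exclude two cases, each in one line:
\begin{itemize}
\item $\calY$ is not a block, since its image under $\beta$ meets it in $\calY\setminus\{y_1\}$ without being equal to it.
\item $\{y_1\}\cup\calX$ is not a block, since an element of $\Alt(\calY)$ moving $y_1$ sends it to a set meeting it exactly in $\calX$.
\end{itemize}
The size conditions you need ($m\ge 3$, $k\ge 4$) hold here.

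You are also more careful than the paper in two places. You record that the $\xi_i$ are even, which is needed for $\gen{\{\zeta_z\}_{z\in Z}}\le\Alt_\ell$. You also justify $\gen{\gamma_i}=\Alt(\calY)$ through the identification $\gamma_i\leftrightarrow\upsilon_i$ in the explicit scaffolding with $k=2d-1$, where the paper only says ``by the definition of $\gamma_i$''.
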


\begin{proof}
     By \Cref{the lattice Lambda}, it suffices to show that $\gen{\{\delta_D\}_{D\in V(L)}} = \Alt(Z)$ and $\gen{\{\zeta_z\}_{z\in Z}} = \Alt_\ell$.
     
     First we note that $\delta_D \in \Alt(Z)$ for all $D\in V(L)$, proving one inclusion. For the other inclusion,
     we assumed that the local action of $\Gamma$ is $\Alt(\calX)$, so we have $\gen{\alpha_1,\ldots,\alpha_n}=\Alt_{\calX}$.
     By the definition of $\gamma_i$ we have  
     $\gen{\gamma_1,\ldots,\gamma_k}=\Alt_{\calY} $.
     Since $\Alt(\calX)\times\Alt(\calY)$ is a maximal subgroup of $\Alt(\calX\cup\calY )=\Alt(Z)$ when $|\calX|\ne |\calY|$, and $\beta\in\Alt_{Z} \setminus (\Alt_{\calX}\times\Alt_{\calY})$, it follows that $$\gen{\{\delta_D\}_{D\in V(L)}} = \gen{\alpha_1,\ldots,\alpha_n,\beta,\gamma_1,\ldots,\gamma_k}=\Alt_{ \calX\cup\calY }=\Alt(Z).$$

     For the other local action, by \ref{data E upsilons} we have $$\gen{\{\zeta_z\}_{z\in Z}} = \gen{\xi_1,\dots,\xi_m,\upsilon_1,\dots,\upsilon_k} = \Alt_\ell.$$
\end{proof}


\begin{claim}\label{Gamma embeds in Lambda}
    The group $\Gamma$ embeds as a subgroup of the stabilizer of a hyperplane in $X_Z\times X_L$ in $\Lambda$.
\end{claim}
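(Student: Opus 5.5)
The plan is to realize $\Gamma$ as the subgroup $\gen{\calX\cup\calA'}$ of $\Lambda$ generated by $x_1,\dots,x_m$ and $A_1,\dots,A_n$. We then identify the subcomplex on which it acts with the product of trees $T_m\times T_n$ sitting inside the carrier of the hyperplane dual to $A'$. Concretely, let $v_0$ be the base vertex of $Y\simeq X_Z\times X_L$, and let $e$ be the edge at $v_0$ labelled $A'$. Let $\calH$ be the hyperplane dual to $e$. Its carrier $N(\calH)$ is $X_Z\times N_L(\calH_L)$, where $N_L(\calH_L)$ is the Davis complex of $\str(A',L)$. Since $\Lk(A',L)$ consists of $d$ isolated vertices, this is $T_d\times[0,1]$.

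\textbf{Step 1: the map is a well-defined homomorphism.} Define $\Gamma\to\Lambda$ by $x_i\mapsto x_i$ and $a_j\mapsto A_j$. The relations $x_i^2$ and $a_j^2$ hold trivially. For a relation $x_ia_jx_{i'}a_{j'}\in\calR$, the presentation of $\Lambda$ contains $x_i A_j\,\delta_{A_j}(x_i)\,\zeta_{x_i}(A_j)$. By \eqref{definition of zeta and delta}, $\delta_{A_j}(x_i)=\alpha_j(x_i)=x_{i'}$ and $\zeta_{x_i}(A_j)=\xi_i(A_j)=A_{j'}$; this is the computation already carried out in the proof of \Cref{from involutive to interlacing}. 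So the images satisfy $\calR$.

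\textbf{Step 2: the image stabilizes a convex product of trees.} Every $A_j$ is adjacent to $A'$ in $L$, so $A_j$ commutes with $A'$ in $\Lambda$. Each $x_i$ also fixes the label $A'$: $\zeta_{x_i}=\xi_i$ fixes $A'$ because $\supp(\xi_i)\subseteq\{\bar j\}$ is disjoint from $A'$. In addition $\delta_{A'}=\id$, so the relation $x_iA'x_iA'$ holds. Thus both types of generators commute with $A'$. Now consider the subcomplex $Q\subseteq Y$ spanned by the vertices $g v_0$ with $g\in\gen{\calX\cup\calA'}$, taking only edges with labels in $\calX\cup\{A_1,\dots,A_n\}$. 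The links of $Q$ are joins of $m$ points with $n$ points: each $A_j$ and $x_i$ span a square, and the $A_j$ are pairwise non-adjacent in $L$ by \ref{data E sets C}. Because labels are permuted by $\alpha_j$ and $\xi_i$ only within $\calX$ and $\{A_j\}$, $Q$ is locally convex, hence it is a convex subcomplex isomorphic to $T_m\times T_n$. Here the theorem of \cite{lazarovich2018regular} is applied to a join of two discrete sets, or one argues directly via the tree factors $X_{\calX}\subseteq X_Z$ and $X_{\{A_j\}}\subseteq X_{\str(A')}$. Moreover $Q$ lies in $N(\calH)$ on the side of $v_0$, and every generator fixes $\calH$.

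\textbf{Step 3: injectivity.} $\Gamma$ acts on $T_m\times T_n$ simply transitively on vertices. The image of $\Gamma$ in $\Lambda$ acts on $Q\cong T_m\times T_n$, and this action is compatible with the labelling. The map $\Gamma\to\mathrm{Aut}(Q)$ factors as $\Gamma\to\Lambda\to\mathrm{Aut}(Q)$. Since $\Lambda$ acts freely on vertices of $Y$, and the composite coincides with the original simply transitive action of $\Gamma$ (same Cayley-type structure, generators acting as the same reflections), any $g$ in the kernel fixes $v_0$, hence $g=1$. This gives the embedding into $\Stab_\Lambda(\calH)$, with irreducibility inherited by the remark in \Cref{building the lattice}.

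I expect the main obstacle to be Step 2, namely verifying that $Q$ is genuinely isomorphic to $T_m\times T_n$ (convexity, and the fact that no extra identifications occur) rather than merely admitting a map from it. I would handle this by exhibiting $Q$ as the product of the subtree of $X_Z$ spanned by $\calX$-labels with the subtree of $X_L$ spanned by $\{A_j\}$-labels. Both are convex, since they are induced on label subsets closed under the relevant local actions.
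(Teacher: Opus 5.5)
Your proposal is correct and is essentially the paper's argument. The paper takes the $\iota$-equivariant map $f$ from the Cayley complex $T_m\times T_n$ of $\Gamma$ into $X_Z\times X_L$, checks that it is a local isometry, and concludes that it is injective. This is exactly your local convexity of $Q$, viewed from the domain side. Your checks that $\delta_{A'}=\id$, that $\xi_i(A')=A'$, and that the $A_j$ are pairwise non-adjacent supply details the paper leaves implicit.

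In Step 3, the freeness that matters is that of $\Gamma$ on its own Cayley complex, not that of $\Lambda$ on $Y$. The phrase ``same Cayley-type structure'' should be replaced by a precise argument: the equivariant map from $\Gamma$'s Cayley complex onto $Q$ is bijective on links, so it is a covering of $Q$, which is simply connected because it is convex; hence the map is an isomorphism. With this, the detour through the uniqueness theorem is unnecessary. Also write the subgroup as $\gen{x_1,\dots,x_m,A_1,\dots,A_n}$, since the paper's $\calA'$ also contains $B$.
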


\begin{proof}
    Define the map 
     $\iota:\Gamma\to\Lambda$ such that $\iota(x_i)=x_i$ and $\iota(a_j)=A_j$. By the construction of $\xi_i$'s and $\alpha_j$'s we have that if $x_{i_1}\cdot a_{j_1}\cdot x_{i_2}\cdot a_{j_2}\in \calR$, then $x_{i_1}\cdot A_{j_1}\cdot x_{i_2}\cdot A_{j_2}\in \calQ$, where $\calQ$ is the set of relations in $\Lambda$.

    The image of $\iota$ is contained in the stabilizer of the hyperplane $\calH' = X_Z \times \calH$ where $\calH$ is the hyperplane labelled $A'\in V(L)$ in $X_L$.
     
     To show that it is an embedding, we note that the map $\iota$ induces a map from the Cayley graph of $\Gamma$ to that of  $\Lambda$, thus induces
     a map between $f:T_m\times T_n \to \calH'$ that is 
     equivariant in the sense that $f(\gamma x)=\iota(\gamma)f(x)$ for all $\gamma\in \Gamma$ and $x\in T_m\times T_n$.
     
     $f$ is a local isometry: let $v$ be a vertex in the Cayley graph of $\Gamma$. By the definition of $\iota$, if  $e_1\ne e_2$ are edges incident to $v$,  then $f(e_1)\ne f(e_2)$. If there is a square $P$ with $e_1,e_2\in\partial P$, it corresponds to a square in $\calR$, therefore is sent to a square in $\calQ$ incident to $v$.
     
     So $f$ is local isometry between CAT(0) spaces, so it is an embedding.  Therefore if the action of $\gamma$ on its Cayley graph is non-trivial then so is $\iota(\gamma)$, which implies that $\iota$ is an embedding.
     \end{proof}
This finishes the proof of \Cref{building the lattice}.
\end{proof}



\section{Proof of \Cref{simple Lattices In Products}.}\label{proof section}

We recall \Cref{simple Lattices In Products}: 
\simpleLatticesInProducts*

\begin{proof}[Proof of \Cref{simple Lattices In Products}] 
For infinitely many $m,n\ge 6$ there exists an involutive BMW group $\Gamma$ of degree $(m,n)$  which is non-residually-finite with alternating local actions. 
See \cite[Theorem 5.5]{radu2020new} or  \cite[Theorem A]{lazarovich2022counting} (see also \cite{burger2000groups} for non-involutive BMW groups).
For each such $\Gamma$, by  \Cref{building the lattice} there exist $c,d\ge 6$ and irreducible lattices $\Lambda' \le \Aut(T_{c}) \times \Aut(X_{O_d})$ such that 
\begin{enumerate}
    \item $\pr_{T_{c}}(\Lambda')$ and $\pr_{X_{O_d}}(\Lambda')$ are vertex transitive on $T_c$ and $X_{O_d}$ respectively.
    \item $\pr_{T_{c}}(\Lambda')$ and $\pr_{X_{O_d}}(\Lambda')$ are non discrete in $\Aut(T_c)$ and $\Aut(X_{O_d})$ respectively. 
    \item The local action of $\Lambda'$ on both $\Aut(T_c)$ and $X_{O_d}$ are alternating.
\end{enumerate}

Then, by \cite[Lemma 3.5.3]{burger2000groups}  for $T_c$, and \Cref{local to global} for $X_{O_d}$ we have that 
$\overline{\pr_{T_{c}}(\Lambda')}=U_{T_c}$ and $\overline{\pr_{X_{O_d}}(\Lambda')}=U_{O_d}$.
By \cite[Proposition 3.2.1]{burger2000groups}, 
the subgroup $U^+_{T_c}\le U_{T_c}$ is an index 2 closed simple subgroup. By \Cref{simplicity of U}, 
$U^+_{O_d}\le U_{O_d}$ is an index 2 closed simple subgroup.

By Bader-Shalom \cite{bader2006factor}, 
$\Lambda'$, and any of its finite index subgroups, is just-infinite, i.e., has no non-trivial infinite-index normal subgroups. 

In addition,  $\Lambda'$ is non-residually finite  because it contains the non-residually finite group $\Gamma$, by \Cref{building the lattice}. Therefore 
its finite residue $\Lambda=\bigcap \{H\le \Lambda' \;\mid\; |\Lambda':H|<\infty\}$  is a non-trivial normal subgroup. 

Since $\Lambda'$ is just-infinite,  $\Lambda$ has finite index in $\Lambda'$, and therefore is itself just infinite.

So $\Lambda$ is just-infinite, has no non-trivial infinite-index normal subgroups, but by its definition (and the fact that it is finite-index in $\Lambda'$), $\Lambda$ has no proper finite-index subgroups, in particular no finite-index normal,  and so it is simple. 

By the construction in $\Cref{construction}$ we see that $\Lambda'$ is finitely presented, and since $|\Lambda':\Lambda|<\infty$, $\Lambda$ is also finitely presented. 
\end{proof}


\section{Discussion}

\paragraph{Torsion and cohomological dimension.}
All known examples of finitely presented simple group have cohomological dimension $2$ or $\infty$.

\begin{question}\label{question about cd}
    Are there finitely presented simple group of cohomological dimension other than $2$ or $\infty$?
\end{question}

Since torsion free uniform lattices in the product $T_c\times X_{O_d}$ have cohomological dimension 3, finding a torsion free lattice as in \Cref{simple Lattices In Products} will give a positive answer to the above question. 

The group $\Lambda'$ constructed in the proof of \Cref{simple Lattices In Products} clearly has torsion as it is generated by involutions. 
Even the index 4 subgroup $\Lambda'^+=\Lambda' \cap (\Aut^+(T_c)\times \Aut^+(X_{O_d}))$ of $\Lambda'$ has torsion, namely, when $[D,D']=1$ the involution $DD'$ is contained in $\Lambda'$.
However, it is unclear whether the finite index simple subgroup $\Lambda$ contained in $\Lambda'^+$ has torsion. 


\begin{question}
    Is there a construction of a virtually simple lattice $\Lambda'$ in $\Aut(T_c)\times \Aut(X_{O_d})$ such that $\Lambda'^+$ is torsion free?
\end{question}

As explained above, a positive answer to this question will imply a positive answer to \Cref{question about cd}.
 



\paragraph{Lattices in other complexes.}

\begin{question}
    Are
    there  finitely-presented, simple, uniform lattice in $\Aut( X_{O_d} )\times \Aut(X_{O_{d'}})$?
\end{question}

Note that $X_{O_d}\times X_{O_{d'}}$ is four-dimensional.
Since \Cref{local to global} applies to both factors, it suffices to construct a non-residually finite lattice in the product $X_{O_d}\times X_{O_{d'}}$ whose projections have the desired local actions.

The case of using the Odd graph can also be generalized:

\begin{definition}
 The Kneser complex $K(n,k)$ is the simplicial complex whose vertices are all subsets of size $k$ out of the set $[n]=\{1,2,\ldots,n\}$, and simplices are pairwise disjoint subsets.  
\end{definition}

Note that $K(2k+1,k)=O_{k+1}$. 
It was shown in \cite{lazarovich2018regular} that $K(nd+1,n)$ is a superstar-transitive $(d-1)$-dimensional flag simplicial complex, and therefore the Davis complex $X_{K(nd+1,n)}$ is the unique CAT(0) cube complex with link $K(nd+1,n)$ at each vertex.

\begin{question}
    For $d,d',n,n'\in \bbN$, 
    are
    there finitely-presented, simple, uniform lattices in $\Aut( X_{K(nd+1,n)} )\times \Aut(X_{K(n'd'+1,n')})$?
\end{question}



A different direction could be to find such lattices in automorphism group of a regular tree and a Bourdon building. 
\begin{definition}
    For $5\leq p\in \bbN, q\in \bbN$, a \textit{Bourdon building $I_{p,q}$}, is a building such that the apartments are tessellations of the hyperbolic plane by right angled $p$-gons, and the link at any vertex is $K_{q,q}$. 
\end{definition}

For even $p=2m$, the Bourdon building is isomorphic to the Davis complex of the Coxeter group $$W=\gen{x_1,\dots,x_m,y_1,\dots,y_m\;|\;x_i^2, \; y_j^2,\; (x_iy_j)^m, \forall 1\le i,j\le m}.$$
This gives an edge-colouring of $I_{p,q}$ with the colours $x_1,\dots,x_m,y_1,\dots,y_m$, and one can use it to define a universal group whose local action is $\Alt(\{x_1,\dots,x_m\})\times\Alt(\{y_1,\dots,y_m\})$.

\paragraph{Commensurators in $\Aut(X_{O_d})$.}
The commensurator of $\Gamma$ in $G$ is the group $$\Comm_{G}(\Gamma) = \{g\in G\;|\; |\Gamma:\Gamma \cap g\Gamma g\ii|<\infty\; \text{ and }|g\Gamma g\ii:\Gamma \cap g\Gamma g\ii| <\infty\}.$$
Leighton \cite{leighton1982finite} and Bass-Kulkarni \cite{bass1990uniform} proved that any two uniform lattices in $\Aut(T)$ are commensurable up to conjugacy. 
This shows that the group $C=\Comm_{\Aut(T)}(\Gamma)$ for a uniform lattice $\Gamma$ in independent of the specific uniform lattice $\Gamma$. 
Lubotzky, Mozes, and Zimmer \cite{lubotzky1994superrigidity} asked if $C^+$ is simple. 
This was recently disproved, see \cite{barnea2025commensurators, leboudec2025commensurator}.

\begin{question}
Is the commensurator of a uniform lattice in $\Aut(X_{O_d})$ simple?
\end{question}

In \cite{woodhouse2023leighton}, Woodhouse proved Leighton's Theorem for the Davis complex $X_L$ for $L=O_d$ (in fact, for the more general $L = K(dn+1,n)$, cf. also \cite{haglund2006commensurability}).
In particular the commensurator $\Comm_{\Aut(X_L)}(W)$ of a uniform lattices $W \le \Aut(X_L)$ is independent of the specific uniform lattices $W$.
The virtually simple group $\Lambda'$ constructed in \Cref{simple Lattices In Products} commensurates the Coxeter group $W_L \le \Lambda'$. Therefore, the projection $\pr_{X_L}(\Lambda')$ commensurates the lattice $W = \pr_{X_L}(W_L )\le \Aut(X_L)$. Since $\Lambda'$ is virtually simple $\Lambda' \simeq\pr_{X_L}(\Lambda')$ is in the commensurator $\Comm_{\Aut(X_L)}(W)$.

Caprace in \cite[Appendix A]{radu2020new} used simple lattices in products of trees to prove that $C^+$ is almost simple -- that is, the intersection of all non-trivial normal subgroups is a non-trivial, non-abelian simple group.
\begin{corollary}
    $G=\Comm_{\Aut(X_L)}(W)$ is almost simple. 
\end{corollary}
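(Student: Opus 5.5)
The corollary states that $G=\Comm_{\Aut(X_L)}(W)$ is almost simple. I would follow Caprace's argument from \cite[Appendix A]{radu2020new}, with the simple lattice $\Lambda$ of \Cref{simple Lattices In Products}, the universal group $U$, and \Cref{local to global} replacing the tree ingredients.

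\textbf{Step 1: a simple subgroup sitting inside $G$.} By the discussion preceding the corollary, $\Lambda'$ commensurates $W_L$. Its projection $\Lambda'_L=\pr_{X_L}(\Lambda')$ is injective, because $\Lambda'$ is virtually simple and irreducible. It commensurates $W$, so $\Lambda'_L\le G$. Let $S=\pr_{X_L}(\Lambda)$; it is isomorphic to the non-abelian simple group $\Lambda$. By \Cref{local to global}, together with non-discreteness of the projection, $\overline{\Lambda'_L}=U$ in $\Aut(X_L)$. Since $\Lambda\le \Lambda'\cap(\Aut^+(T_c)\times\Aut^+(X_L))$ has finite index and $U^+$ is simple of index $2$ (\Cref{U^+ is index two in U}, \Cref{simplicity of U}), the closure $\overline{S}$ is a finite-index closed subgroup of $U^+$. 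Being normal in $\overline{\Lambda'_L}=U$ after passing to the normal core, it equals $U^+$. So $S$ is dense in $U^+$.

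\textbf{Step 2: every non-trivial normal subgroup $N\trianglelefteq G$ contains $S$.} I would split this into three parts.

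First, $N$ is non-discrete, or at least acts with unbounded orbits. $G$ contains $W$, whose action is cocompact, and $G$ is dense in $\Aut(X_L)$, or at least its closure contains $U$. A non-trivial normal subgroup of $G$ is therefore normalized by a cocompact, minimal, boundary-faithful action. I would argue as in Caprace: a non-trivial element $g\in N$ together with its $W$-conjugates produces elements of $N$ that do not fix any given ball. Using commensuration, one gets that $N\cap\Lambda'_L$ is non-trivial.

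Second, for the key step I would use that $N\cap \Lambda'_L$ is normalized by $\Lambda'_L$. Just-infiniteness of $\Lambda'$ (Bader--Shalom) then forces $N\cap\Lambda'_L$ to have finite index in $\Lambda'_L$.

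Third, $\Lambda$ is the finite residual of $\Lambda'$, so it lies in every finite-index subgroup of $\Lambda'$. Hence $S\le N$.

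The main obstacle is the first part, showing $N\cap\Lambda'_L\neq1$. Following Caprace, I would take $1\ne g\in N$. The commutator subgroup $[g,\Lambda'_L]$ lies in $N$, so it suffices to find $\lambda\in\Lambda'_L$ with $[g,\lambda]\in\Lambda'_L\setminus\{1\}$. Since $g$ commensurates $\Lambda'_L$ (an open-like condition via $W$), some finite-index subgroup $\Lambda_0\le\Lambda'_L$ satisfies $g\Lambda_0g^{-1}\le\Lambda'_L$. Then $[g,\lambda]=(g\lambda g^{-1})\lambda^{-1}\in\Lambda'_L$ for all $\lambda\in\Lambda_0$. If all these commutators were trivial, $g$ would centralize $\Lambda_0$, which is dense in an open subgroup of $U$. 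Faithfulness of $U$ on $\partial X$ (used in \Cref{simplicity of U}) then forces $g=1$.

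\textbf{Step 3: conclusion.} Every non-trivial normal subgroup of $G$ contains $S$, and $S\le G$ is itself non-trivial. Let $M$ be the intersection of all non-trivial normal subgroups; then $M\supseteq S$, so $M$ is non-trivial and non-abelian. To see that $M$ is simple, let $1\ne K\trianglelefteq M$. The normal closure of $K$ in $G$ contains $S$, and a standard argument shows $K$ itself contains $S$: $S$ is perfect and simple, and conjugates of $K$ by $G$ generate a subgroup commuting modulo $K$ arguments, as in Caprace's appendix. Hence $M$ is the normal closure of $S$ and has no proper non-trivial normal subgroups. I would import this final group-theoretic step verbatim from \cite[Appendix A]{radu2020new}.
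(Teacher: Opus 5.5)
\textbf{There is a genuine gap in Step 2, at the point you yourself call the main obstacle.} You write that $g\in N$ ``commensurates $\Lambda'_L$'', so that $g\Lambda_0g\ii\le\Lambda'_L$ for some finite-index $\Lambda_0\le\Lambda'_L$. Membership in $G=\Comm_{\Aut(X_L)}(W)$ only tells you that $g$ commensurates the discrete group $W$. It says nothing about the non-discrete group $\Lambda'_L\supseteq W$, and in general the claim is false.

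To see this, take closures. You would get $g\overline{\Lambda_0}g\ii\le U$ with $\overline{\Lambda_0}$ open of finite index in $U$, so $U\cap g\ii Ug$ would be open in $U$. But $U\cap g\ii Ug$ is exactly the set of $h\in U$ preserving the function $x\mapsto$ sign of $\bfD_x g$. Now $G$ contains elements for which this function is periodic under a finite-index subgroup of $W$, yet not invariant under any $U^n_v$. One example is the automorphism fixing $1\in W$ with local action $\tau^{\rho(x)}$, where $\tau=(i\,j)$ and $\rho:W\to\mathbb{Z}/2$ sends $a\mapsto1$ iff $\{i,j\}\subseteq a$.

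The last sentence of that paragraph is also misaimed. The element $g$ need not lie in $U$, so faithfulness of $U$ on $\partial X$ is not the relevant fact.

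\textbf{The fix is to run your commutator trick with $W$ itself.} This is what the paper's proof does via Caprace's Lemma A.4. Since $g$ commensurates $W$, the subgroup $W_0=W\cap g\ii Wg$ has finite index in $W$, and $[g,\lambda]\in N\cap W\subseteq N\cap\Lambda'_L$ for every $\lambda\in W_0$. If all these commutators are trivial, then $g$ centralizes a finite-index subgroup of $W$. Since $O_d$ has girth $6$, $L$ has no induced $4$-cycles, so $W$ is hyperbolic. Then $g$ fixes the attracting points of all hyperbolic elements of $W_0$, hence acts trivially on $\partial W$. Faithfulness of $\Aut(X_L)$ on $\partial W$ then gives $g=1$.

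This fact, that no nontrivial element of $G$ centralizes a finite-index subgroup of $W$, is the one geometric input the corollary needs, and your proposal never establishes it. With it, your route is valid: just-infiniteness of $\Lambda'\cong\Lambda'_L$ makes $N\cap\Lambda'_L$ of finite index, so it contains the finite residual $S$.

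\textbf{Step 3 is not an argument as written.} The clean version goes as follows. Take $1\ne K\normalin M$ and repeat the commutator trick with the finite-index subgroup $W\cap S\le M$ of $W$. This gives $K\cap S\ne1$, so $S\le K$ by simplicity of $S$. Applying this to each $gKg\ii\normalin M$ gives $g\ii Sg\le K$ for all $g\in G$. Hence $K$ contains the normal closure of $S$ in $G$, which is $M$.

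Finally, the density of $S$ in $U^+$ in Step 1 and your non-discreteness discussion are not needed. The paper uses only that $\pr_{X_{O_d}}(\Lambda)$ is simple and contains a finite-index subgroup of $W$.
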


\begin{proof}
By Lemma A.4 of \cite{radu2020new}, to show that $G$ is almost simple it suffices to show that $G$ contains a simple group $B$ which contains a finite index subgroup of $W$, and that there is no non-trivial element $z\in G$ which centralizes a finite index subgroup of $W$.

Let $B = \pr_{X_{O_d}}(\Lambda)$. Then $B$ is simple. 
Since $|\Lambda':\Lambda|<\infty$ and $W\le \pr_{X_{O_d}}(\Lambda')$ we have  $|W:W\cap B|<\infty$.
Thus, the simple group $B$ contains a finite index subgroup of $W$.

Let $z\in G$ be an element that centralizes a finite index subgroup of $W$. 
Since $L$ has no induced 4-cycles, $W$ is hyperbolic. 
The action of $\Aut(X_L)$ (and so in particular of $G$) on  $\partial W$ is faithful. 
Since $z$ centralizes $W$, it acts trivially on the Gromov boundary $\partial W_L$.
Therefore, $z=1$.
\end{proof}

\bibliographystyle{plain}
\bibliography{biblio}

 \end{document}